\newtheorem{theorem}{Theorem}[section]
\newtheorem{proposition}[theorem]{Proposition}
\newtheorem{corollary}[theorem]{Corollary}
\newtheorem{remark}[theorem]{Remark}
\newcommand{\ds}{\displaystyle}
\newcommand\beq{\begin{equation}}
\newcommand\ene{\end{equation}}
\numberwithin{equation}{section}
\title{Galaxy dynamics, gravitational  Vlasov-Poisson system, Landau damping, and scattering theory\thanks{2020 MSC, 35P25; 35Q83; 35Q85; 47A40; 85A05. }}
\author{
Ricardo Weder\thanks{weder@unam.mx.}\thanks{ Ricardo Weder is an Emeritus National Researcher of SNII-SECIHTI, M\'exico}\\
Departamento de F\'\i sica Matem\'atica\\
Instituto de Investigaciones en
Matem\'aticas Aplicadas y en Sistemas\\
Universidad Nacional Aut\'onoma de M\'exico\\
Apartado Postal 20-126, IIMAS-UNAM\\
 Ciudad de M\'exico, CP 01000, M\'exico}
\date{}
\begin{document}
\baselineskip=15 pt

\maketitle

\begin{abstract} 
\noindent  We consider the gravitational Vlasov-Poisson system linearized around    
 steady states  that are extensively used to study the dynamics of  galaxies, or of clusters of galaxies. Namely,  polytropes and   King  steady states.  We develop a complete stationary scattering theory for the selfadjoint, strictly  positive, Antonov operator that governs the plane-symmetric linearized dynamics. We identify  the absolutely continuous spectrum of  the Antonov operator.   Moreover,  we prove that the part of the   singular spectrum  of the Antonov operator that is embedded in its  absolutely continuous spectrum  is contained in a closed set of measure zero, that we characterize. We construct the generalized Fourier maps, and we  prove that the wave operators exist and are complete. Moreover,  we obtain  stationary formulae for the wave operators, and we prove that Birman's invariance principle holds. Using these results we obtain a precise description of the dynamics of  the stars in the galaxies, or of the galaxies in the clusters of galaxies, for large times. Namely, we prove  that   the distribution function of the solutions to the linearized  gravitational Vlasov-Poisson system with initial data in the absolutely continuous subspace  of the Antonov operator   are asymptotic, for large times, to the solutions to the unperturbed  linearized  gravitational Vlasov-Poisson system.  This implies that they are asymptotic
  to the trajectories of the solutions to Newton's equation with the gravitational potential of the steady state, in the sense that they are transported along these trajectories. Moreover,  for these initial states  the gravitational Landau damping holds. Namely, we  prove that the gravitational force and its time derivative, as well as the gravitational potential and its time derivative, tend to zero  for large times.

\end{abstract}

Keywords: Galaxy dynamics, Vlasov-Poisson, Antonov operator, Landau damping, scattering theory.

\section{Introduction}\label{intro}
Galaxy dynamics, generally speaking,  is the study of the dynamics of selfgravitating  matter that consists of the stars in a galaxy, or in a larger scale of clusters of galaxies. Since the early days of this subject it was realized that it is appropriate   to take a statistical point of view and to describe the dynamics by means of a distribution function $f(t,x,v)$  that at time $t$ gives the density of stars, or of galaxies,  at position $x$ in space that have  velocity $v.$ For this purpose Jeans introduced  in 1915 \cite{jeans} an equation  that was later independently considered by Vlasov \cite{vlasov} in the related problem in plasma physics. This  equation came to be known as the Vlasov equation.  To have a selfconsistent  system of equations one has to add the Poisson equation that gives the gravitational potential induced by the self-gravitating matter. This is the gravitational Vlasov-Poisson system that we study in this paper. In three dimensions  it is the following system

\begin{eqnarray}\label{0.1}
\partial_t f(t,x,v)+v\cdot \nabla_x f(t,x,v)- \nabla_x U(t,x)\cdot\nabla_v f(t,x,v)=0,\\
\Delta U(t,x)= 4\pi \rho(t,x),\label{0.1b}\\
\label{0.1c} \rho(t,x):= \int f(t,x,v)\, dv.
\end{eqnarray} 
We have taken the mass of a typical star and Newton's  constant $G$   equal to one. 
 Further, $U(t,x)$ is the potential induced by the self-gravitating matter, and $\rho(t,x)$ is the density of matter. 
The  system  \eqref{0.1}-\eqref{0.1c} is the fundamental system used in astrophysics to study Newtonian galactic  dynamics \cite{bt}. It  has steady-state solutions, i.e. solutions that are independent of time. These solutions are of great importance in the dynamics of galaxies \cite{bt}. In  this paper we are interested in steady states where the distribution function is a  function of the microscopic energy $E(x,v)= \frac{1}{2} v^2+U_0(x).$  In particular, we consider  polytropes and in King steady states, that are extensively used in astrophysics \cite{bt}.
The stability of  steady states is a major concern in astrophysics. There is a large literature on the stability of  steady states, including  nonlinear orbital stability. We comment on these result in Subsection~\ref{preres}.  However, much less  seems to be known about the asymptotic stability of  steady states. We consider a steady-state solution to \eqref{0.1}-\eqref{0.1c}, given by $ (f_0(x,v), U_0(x)),$ with $f_0(x,v)= \varphi(E),$ for an appropriate function $\varphi.$ See  Subsection~\ref{subps}. We linearize the  system \eqref{0.1}-\eqref{0.1c} around $f_0,$ as $ f=f_0+ \varepsilon \delta f.$  
We introduce this expansion in  \eqref{0.1}-\eqref{0.1c}, and keeping only the linear terms in $\varepsilon$ we get,

\begin{eqnarray}\label{0.2}
\partial_t \delta f(t,x,v)+ \tilde{\mathcal D}\delta  f(t,x,v)+ \nabla_x U(t,x) v |\varphi'(E)|=0,\\\label{0.3}
 \Delta U(t,x)= 4\pi\rho(t,x),\\
\rho (t,x):= \int \delta f(t,x,v)\, dv.     \label{0.4}
\end{eqnarray}
The operator  $\tilde{\mathcal D}$ is the transport operator associated with the characteristic flow of the steady state,
\beq\label{2.8}
\tilde{\mathcal D}:= v \partial_x- U'_0(x) \partial_{v}.
\ene
 The behaviour for large times of the macroscopic quantities 
associated to \eqref{0.2}-\eqref{0.4} is referred to as gravitational relaxation \cite{bt}, \cite{lin1}, \cite{lin2}, and \cite{lin3}. In particular, the gravitational Landau damping  is the decay to zero  of the gravitational force $\mathbf  F=-\nabla U.$ 
We consider the system  \eqref{0.2}-\eqref{0.4} under the assumption that the 
phase-space density is {\it plane symmetric}. See Subsection~\ref{vp}. 
The assumption of plane symmetry is extensively used in galaxy dynamics. In particular, in the case of disk galaxies. See, for example  \cite{ant2}, \cite{araki}, \cite{fried},  \cite{hrs}, \cite{kalnajs}, \cite{louis}, \cite{mark}, \cite{mathur}, \cite{wainberg}  and the references quoted there.  We consider the case of planar symmetry, that is somehow simpler,  to concentrate on the main aspects of our  method.   Note, however, that our method  applies in the case of  spherical symmetry  and also if there is an external gravitational potential. In fact,  our method in stationary scattering theory  is general and  it can be used for other problems, like, for example, the plasma physics case.

To study the large time asymptotic of the solutions to the  system \eqref{0.2}-\eqref{0.4} we follow a well established method in galaxy dynamics \cite{ant}, \cite{ant1}, \cite{bt}. We introduce the Antonov operator  that is a selfadjoint, strictly positive operator in a natural Hilbert space $\mathcal H,$ and we consider the Antonov wave equation that is equivalent to the   system  \eqref{0.2}-\eqref{0.4}. See
Subsection~\ref{subant}. In this paper we develop a complete stationary scattering theory for the Antonov operator. 
Our  results on the gravitational Landau damping are given in Theorem~\ref{damping} and  Corollary~\ref{decaypot} where we prove that  the gravitational force,  its time derivative, the gravitational potential and  its time derivative,  tend to zero  for large times. Moreover,  we obtain 
in Section~ \ref{ladab}  our results on the large time asymptotics of   the distribution function. 
 These  results give a precise description of the  dynamics of the  stars in the  galaxies, or of the  galaxies in the clusters of galaxies, for large times that
 to  the best of our knowledge is new.
     
 Stationary scattering theory is a powerful method that has been used in many problems in mathematical physics. See, for example,  \cite{aw}, \cite{kur}, \cite{rs4}, \cite{rs3}, \cite{rw91}, \cite{ya}  and the references quoted there. It appears that it is in this paper where stationary scattering theory  is used in galactic dynamics for the first time. 

The paper is organized as follows. In Subsection~\ref{preres} we discuss the previous results. In Section~\ref{prel} we   introduce the notations and the definitions that we use. We state  preliminary results that we  use in later sections. Further, we  consider the gravitational Vlasov-Poisson system with planar symmetry,   we  linearize it around a steady state, and we  introduce the Antonov operator  \cite{ant}, \cite{ant1}.
Moreover, we introduce a spectral representation of the unperturbed Antonov operator in terms of energy-angle variables.
    In Section~\ref{specan} we consider the spectral theory of the Antonov operator. In particular,
    we prove that the absolutely continuous spectrum of the Antonov operator coincides with its essential spectrum and with the spectrum of the unperturbed Antonov operator and that the part of the singular spectrum of the Antonov operator that is embedded in the absolutely continuous spectrum is contained in a closed set of measure zero.
In Section~\ref{fouma} we construct the generalized Fourier maps and  we prove that they
diagonalize the Antonov operator. 
 In Section~\ref{wave} we prove that the wave operators  exist,  are complete, 
   and that   they are they given by the stationary formulae .
    In Section~\ref{ladab} we obtain our results on Landau damping and on the large time asymptotic of the distribution function.
In Section~\ref{scat} we  develop the scattering theory for the Antonov wave equation.
  In Section~\ref{concl} we give our conclusions.  Finally, in Appendix~\ref{apex} we obtain auxiliary results on the space of H\"older continuous functions and on the energy-angle variables that we need.  
\subsection{Previous results} \label{preres} 
The stability of steady-state solutions to the gravitational Vlasov-Poisson system has been extensively studied.
 In the seminal work of Antonov \cite{ant}-\cite{ant2} a linear  bound for spectral stability was identified. This Antonov bound plays a fundamental role in the study of the stability of the steady states of the gravitational Vlasov-Poisson system, even in  the proofs of orbital stability in the nonlinear case. See, for example, \cite{lem1}, \cite{lem2}, and the review \cite{mouhot2} where  a great deal of information can be found. Concerning asymptotic stability, in the physics literature it has been  suggested in  \cite{lin1}, \cite{lin2}, and \cite{lin3} that  in the linear case mixing could lead to damping. However, it  has been remarked that  that there could be oscillations in the linear case, see \cite{mathur}, Chapter 5 of \cite{bt}, and \cite{fried}.  Recently, in the mathematics literature criteria for the existence and for the absence of eigenvalues of the Antonov operator  where obtained \cite{hrs}, \cite{hrss}, \cite{ku}, and \cite{vdb}. These eigenvalues correspond to oscillatory solutions to the linearized gravitational Vlasov-Poisson system. The results of  \cite{hrs}, \cite{hrss}, \cite{ku}, and \cite{vdb} were obtained by means of a Birman-Schwinger principle, in the spherical symmetric case under different conditions. Moreover, in \cite{hrs}  the case of planar symmetry is considered. Furthermore,  in \cite{hrss}, in the case where the are no bound states, it is proved that   the time average over a time interval $T$ of the time derivative of the gravitational force tends to zero as $T$ tends to infinity.
  Further, \cite{vdb}   proves that  the absolutely continuous spectrum of the Antonov operator and of the unperturbed Anonov operator are the same, by means of a trace class criterium. In the linear case, the articles \cite{chalu1}, \cite{chalu2}, \cite{hrss2}, \cite{vdb2}, and \cite{riosar} consider the Vlasov equation with an external potential. They obtain large-time decay estimates. Note however, that as pointed out in \cite{hrss2} this problem is not equivalent to the linearized Vlasov- Poisson system. Further, \cite{chalu2} also considers the nonlinear gravitational Vlasov-Poisson system. In this paper, for small initial data,  solutions are constructed for a finite time interval, that depends on the smallness of the initial data, and estimates for the solutions during the interval of existence are obtained.  
 
 For the Vlasov-Poisson system in the plasma  physics case the Landau damping, namely the decay of the electric field for large times was discovered in the fundamental work of L. D Landau \cite{landau}. It has been extensively studied. A major breakthrough was the work of Mouhot and Villani  \cite{villani}, who proved  the Landau damping in the nonlinear case. See also \cite{bmm} and \cite{grn}.  When the Vlasov-Poisson system in the plasma physics case is considered with an external constant magnetic field the Landau Damping disappears and the electric field is oscillatory in time, as was shown by Bernstein \cite{bernstein}. For the study of this problem in the mathematics literature see \cite{bedro} and 
 \cite{cdrw}. In \cite{cdrw} it was also proved that there are time independent solutions. For recent reviews of these results see  \cite{bedro1} and \cite{nyu}. These results on Landau damping in the plasma physics case are for perturbations of homogeneous steady states. The proof of Landau damping in the plasma physics case for perturbations of non homogeneous steady states was pioneered by Despr\'es in \cite{bruno1} \cite{bruno2}, who  used scattering theory methods.  See also \cite{hm}. 
 
\section{Preliminaries}\label{prel}
\subsection{Notations and Definitions} \label{not}
For any set $O \subset \mathbb R$ we denote $\chi_O(x)$ the characteristic function of $O.$
For a separable Hilbert space $ \mathcal G,$  for a bounded open interval $I \subset \mathbb R,$ and for $0< \alpha \leq 1,$  we denote by  $C_\alpha(I, \mathcal G)$ the Banach space of all functions defined on $I,$ with values in $\mathcal G,$ that are H\"older continuous with exponent $\alpha.$ See page 116 of \cite{ya} for the definiton.
Further, we designate by $\hat{C}_\alpha(I, \mathcal G)$ the completion of 
$C^\infty(I, \mathcal G)$ in the norm of $C_\alpha(I, \mathcal G).$
Moreover, we designate by  $\hat{C}_{\alpha,0}(I, \mathcal G)$ the set of all functions in $\hat{C}_\alpha(I, \mathcal G)$ with compact support in $I.$
Let    $O$ be a set of real numbers, $\mu$ a sigma-finite complete measure on $O,$ and $\mathcal G$ a Hilbert space. By $L^2(O, \mathcal G; \mu)$ we denote the Hilbert space of all  functions from $O$ into $\mathcal G$ that are measurable and square integrable (Bochner)  with respect to $\mu.$
For  the definition of these spaces see \cite{hp}. If $\mu$ is the Lebesgue measure we use the notation  $L^2(O, \mathcal G),$ and if $\mathcal G= \mathbb C$ we use the notation $L^2(O).$  
We denote by $H_j((0,1)),$ for $j=1,2$ the Sobolev space  \cite{adams}. We denote by $H_{1,{\rm p}}((0,1))$ the closed subspace of $H_1((0,1))$  of all the functions in $H_1((0,1))$ that satisfy $g(0)=g(1).$ Furthermore, let us  designate by $H_{2,{\rm p}}((0,1))$ the closed subspace of $H_2((0,1))$ of all the functions in $H_2((0,1))$ that satisfy  $g(0)=g(1)$ and $g'(0)=g'(1).$
For  Banach spaces $\mathcal B_1$ and  $\mathcal B_2$  we denote by $B(\mathcal B_1, \mathcal B_2 )$ the Banach space of all bounded linear operators from  $\mathcal B_1$ into $\mathcal B_2.$
 For the  definitions below see Section 5 of Chapter 6 and Section 1 of Chapter 10 of \cite{kato}. Let $A$ be a selfadjoint operator in a Hilbert space $\mathcal G.$ Let $E_A(\lambda)$ be the, right continuous,  spectral family of $A$ such that by the spectral theorem
   $
A= \int_\mathbb R \lambda\, dE_A(\lambda).
$
For any Borel set $O \subset \mathbb R,$ let $E_A(O)$ be the spectral projector  for $O.$ We denote by $\mathcal G_{\rm p}(A)$ the discontinuous subspace of $A$, or the point subspace of $A.$ It is the closed subspace of $\mathcal G$ generated by all the eigenvectors of $A.$  We designate by $\mathcal G_{\rm ac}(A)$ the absolutely continuous subspace of $A$ and by
 $\mathcal G_{\rm sc}(A)$ the singular continuous subspace of $A.$  
 For any selfadjoint operator $A$ in $\mathcal G$ we denote the resolvent set of $A$ by $\rho (A),$  and by $\sigma (A)$ the spectrum of $A.$ Further, we denote by 
 $R_A(z)$ the resolvent of $A,$ namely,
 $
 R_A(z):= (A-z)^{-1},z \in \rho(A). 
 $
The point spectrum of $A,$ denoted by $\Sigma_{\rm p}(A),$ is the set of all the eigenvalues of $A.$
  The absolutely continuous spectrum of $A,$ is designated   by $\sigma_{\rm ac}(A).$  Further, the singular continuous spectrum of $A,$ is designated   by $\sigma_{\rm sc}(A).$  The singular spectrum of $A,$  denoted by $\sigma_{\rm sing}(A),$ is the union of the closure of the set of all the eigenvalues of $A$ with the singular continuous spectrum of $A,$ i.e. $\sigma_{\rm sing}(A):=
 \overline{\Sigma_{\rm p}(A)} \cup \sigma_{\rm sc}(A).$
  Moreover, the discrete spectrum of $A,$  that is denoted by $\sigma_{\rm dis}(A),$ consists of all the eigenvalues of $A$ of finite multiplicity that are isolated points of $\sigma(A).$ Further, the  essential spectrum of $A,$ denoted by $\sigma_{\rm ess}(A),$ is the complement in $\sigma(A)$ of the discrete spectrum, $\sigma_{\rm ess}(A):=\sigma(A) \setminus\sigma_{\rm dis}(A).$ 
 We denote by $C$ a generic constant that does not have  to take the same value when it appears in different places.
 Let $ \mathcal P\equiv \mathbb R\times \mathbb R$ be the phase space for one dimensional particles with  position $x \in \mathbb R$ and velocity $ v\in \mathbb R.$
 We use the convention that the  functions $f(x,v)$ defined in a set  $O\subset \mathcal P$  
  are extended   by zero to $\mathcal P\setminus O.$ This allows us to simplify the notation for the domain of integration of several functions below. 

\subsection{The gravitational Vlasov-Poisson system}\label{vp}
We study the Vlasov-Poisson system  under the assumption that the phase-space density is {\it plane symmetric}.
This means the following. We denote $x=(x_1, x_2,x_3), v=(v_1,v_2,v_3), $ and $
\hat{v}:= (v_2,v_3).$  
The phase-space density is {\it plane symmetric} along $x_1$ if it  depends only on the first Cartesian coordinate $x_1$ and it is symmetric for reflections in $x_1$ and $v_1,$
\beq\label{1.2b}
f(t,x,v)= f(t,x_1,v_1, \hat{v})= f(t,-x_1, -v_1, \hat{v}).
\ene
As the phase-space density and the induced gravitational  potential only depend on the first coordinate $x_1$ we drop the subscript one and consider the  system  \eqref{0.1}-\eqref{0.1c} for $x\in \mathbb R,$ and $ v \in \mathbb R^3.$ With this notation the {\it plane-symmetric} system  \eqref{0.1}-\eqref{0.1c} can be written as follows,

\begin{eqnarray}\label{1.3b}
\partial_t f(t,x,v)+v_1\partial_x f(t,x,v)- \partial_x U(t,x)\,\partial_{v_1} f(t,x,v)=0,\\\label{1.4b}
 U(t,x)= 2\pi \int |x-y| \rho(t,y)\, dy,\\
\rho (t,x):= \int  f(t,x,v)\, dv,       \label{1.5b}
\end{eqnarray}
and the condition of {\it planar symmetry} reads,
\beq\label{1.5bb}
f(t,x,v)=f(t,x,v_1, \hat{v})= f(t,-x, -v_1, \hat{v}).
\ene
Note that in  \eqref{1.3b}  and \eqref{1.5bb} the transversal velocity $\hat{v}$ appears as a fixed parameter. In fact,  the system \eqref{1.3b}-\eqref{1.5bb} is equivalent to the one dimensional  gravitational Vlasov-Poisson system,
\begin{eqnarray}\label{1.3}
\partial_t f(t,x,v)+v\partial_x f(t,x,v)- \partial_x U(t,x)\,\partial_{v} f(t,x,v)=0,\\\label{1.4}
 U(t,x)= 2\pi \int |x-y| \rho(t,y)\, dy,\\
\rho (t,x):= \int f(t,x,v)\, dv,       \label{1.5}
\end{eqnarray}
 where  $t,x, v \in \mathbb R,$ 
with the assumption of planar symmetry,
 \beq\label{1.2}
f(t,x,v)= f(t,-x, -v).
\ene
Actually, given a solution, $f(t,x,v)$ to  \eqref{1.3b}-\eqref{1.5bb} the function 
$$
g(t,x,v):= \int  f(x,v, \hat{v})\, d\hat{v}
$$
is a solution to   \eqref{1.3}-\eqref{1.2}. On the other hand, given a solution $ g(t,x,v)$ to 
 \eqref{1.3}-\eqref{1.2} and taking a function $ \phi(\hat{v})$ defined on $\mathbb R^2,$ such that
 $
 \int \phi(\hat{v})\, d\hat{v}=1,
 $ 
the function
$$
f(t,x,v_1, \hat{v}):= g(t,x,v_1) \phi(\hat{v}),
$$
is a solution to \eqref{1.3b}-\eqref{1.5bb}. Hence, in what follows for simplicity  we consider the one-dimensinal {\it plane-symmetric} Vlasov-Poisson system  \eqref{1.3}-\eqref{1.2}.

Note that the {\it planar symmetry} \eqref{1.2} implies that 
the mass density is even,
$
\rho(t,-x)=\rho(t, x).
$
Then, by \eqref{1.4} the potential $U$ is also even,
$
U(t,x)= U(t,-x).
$
Observe  that by the equation for $U$ in \eqref{1.4}
\beq\label{1.8}
\partial_x U(t,x)=  2\pi \int_{\mathbb R} \text{\rm sign}(x-y) \rho(t,y)\, dy= 4\pi \int_0^x \,\rho(y)\, dy.
\ene
Then,  by \eqref{1.8}
\beq\label{1.9b}
\partial_x U(t,0)=0.
\ene

\subsection{Steady states}\label{subps}
Steady states for the Vlasov-Poisson system have been extensively studied. See, for example, \cite{bt}, \cite{fried}, \cite{lem2}, \cite{rein}, and the references mentioned there.
We consider steady states $(f_0(x,v), U_0(x))$  that are time independent solutions   to the {\it plane-symmetric}  system \eqref{1.3}-\eqref{1.2}.
 We are interested in steady states of the form
\beq\label{1.11}
f_0(x,v)= \varphi(E)
\ene
where $E$ is the microscopic energy,
\beq\label{1.10}
E(x,v)= \frac{1}{2} v^2+U_0(x).
\ene
We take  a cutoff energy $E_0 > 0 $ and we require $\varphi$ to be either a polytrope
\beq\label{1.12}
 \varphi(E)= \left\{\begin{array}{l}
(E_0-E)^k, \qquad E_0-E >0,\\
0, \qquad E_0-E \leq 0,
\end{array} \right. 
\ene
where $ k \geq 1,$
or a  King steady state 
\beq\label{1.12b}
 \varphi(E)= \left\{\begin{array}{l}
e^{(E_0-E)}-1,\qquad  E_0-E >0,\\
0, \qquad E_0-E \leq 0.
\end{array} \right. 
\ene
In Proposition2.3 of \cite{hrs} (see also \cite{rein}) it is proved that for the Ansatz   \eqref{1.11}-\eqref{1.12b}
and for each $M_0 >0$ there is an unique {\it plane symmetric } steady state $(f_0,U_0)$ such that $M_0:= \int_{\mathbb R} \rho_0(x)\, dx.$ Moreover, the  mass density $\rho_0$ and the potential $U_0$ are  even,
$ \rho_0(-x)= \rho_0(x),$ and $ U_0(-x)=U_0(x), x \in \mathbb R.$ Further,  $\rho_0\in C^1(\mathbb R),$  it has compact support in $[-R_0,R_0],$ for some $R_0>0,$ and it is strictly decreasing on $[0,R_0].$
 The potential $U_0$ belongs to $C^3(\mathbb R),$ is  convex on $\mathbb R$ and strictly increasing on $[0,\infty].$ Moreover, $U_0(x)= 2\pi M_0\, x$ for $x \geq R_0,$ and $U_0(x)=- 2\pi M_0 \, x$ for $x \leq R_0.$
The potential $U_0(x)$ is positive for $ x \in \mathbb R,$ and $ U_0(0)= \min_{x \in \mathbb R} U_0(x).$
Moreover,  the cutoff energy that appears in \eqref{1.12} and \eqref{1.12b} satisfies
$
E_0= U_0(R_0)=2\pi R_0 M_0.$

Let us  consider the characteristic equations for $(x, v)$ in the steady state,
\beq\label{2.1}
v(t)=x'(t) , \qquad v'(t)= - U'_0(x(t)),
\ene
or equivalently, Newton's equation with the gravitational potential of the steady state
$$
x''(t)=  - U'_0(x(t)).
$$
For every global solution $(x(t), v(t)), t \in\mathbb R,$ to \eqref{2.1} the energy $E=E(x(t),v(t)):=\frac{1}{2}v^2+U_0(x) $ is independent of $t$. Furthermore, for every $ E=E(x(t), v(t)) > U_0(0)$ the solution  $(x(t), v(t))$  to     \eqref{2.1} oscillates periodically between the turning points $x_-(E)$ and $x_+(E)$  
that satisfy  $ x_-(E) < 0 < x_+(E)$ and   $U_0(x_\pm(E))= E.$
The following results are  proved  in \cite{guorein}, \cite{lem2}, and  Lemma 2.4 of \cite{hrs}. We have that  $x_\pm(E_0)= \pm R_0.$ Moreover,
for all $ E_0 \geq E > U_0(0)$ we have that
 $U_0(x) < E$ is equivalent to $ x_-(E) < x < x_+(E).$ Further,
the  $x_\pm$ are continuously differentiable on $(U_0(0), E_0]$ 
and
\beq\label{1.20}
x'_\pm(E)= \frac{1}{U'_0(x_\pm(E))}, \qquad E \in  (U_0(0), E_0].
\ene
Moreover,  $x_-(E)=- x_+(E),$  $x_+(E)$ is strictly increasing for $E\in (U_0(0), E_0],$  and  $x_-(E)$ is strictly decreasing for 
$E \in (U_0(0), E_0].$ Furthermore, 
$
\lim_{E\to U_0(0)} x_\pm(E)= 0. $
Hence, we set $ x_\pm(U_0(0)=0.$

The period $ T(E)$  for  the solution to travel from $x_-(E)$ to   $x_+(E)$ and back from 
 from $x_+(E)$ to   $x_-(E)$ is given by \cite{bt}
 \beq\label{2.3}
 T(E)= 2 \int_{x_-(E)}^{x_+(E)} \frac{1}{ \sqrt{2(E- U_0(x(t)))}}\, dx= 4 \int_{0}^{x_+(E)} \frac{1}{ \sqrt{2(E- U_0(x(t)))}}\, dx.
 \ene
It is proved in  \cite{cw} and  Lemma 2.6  and Propositions 2.7 and 2.8 of \cite{hrs} that 
the period function $T(E)$ is continuously differentiable and strictly increasing, $T'(E)>0,$ for $E \in (U_0(0), E_0].$ 
Further, 
for $ E \in (U_0(0), E_0).$
\beq\label{2.4}
0 < \frac{2\pi}{\sqrt{U''_0(0)}} =\sqrt{\frac{\pi}{\rho_0(0)}}= \lim_{E \downarrow U_0(0)}T(E) < T(E) < T(E_0) < \infty.
\ene

\subsection{Linearization. The Antonov operator}\label{subant}
Let us denote by $\hat{\Omega}_0$ the interior of the support of the distribution function  $f_0$
of the steady-state,
\beq\label{1.18}\begin{array}{l}
\hat{\Omega}_0:= \{ (x,v)\in \mathbb R^2 : f_0(x,v) \neq 0\} = \{  (x,v)\in \mathbb R^2  : E(x,v) < E_0 \}.
\end{array}
\ene
The finite energy cutoff  $E_0$ assures that $\hat{\Omega}_0$ is an open bounded set.
Following \cite{hrs}  we linearize the  system \eqref{1.3}-\eqref{1.5} around $f_0.$ We take $ f=f_0+ \varepsilon \delta f.$  Note that since $\delta f$ has to be small with respect to $f_0$ the support of $\delta f$ has to be contained in the support of $f_0,$ that is equal to $\overline{\hat{\Omega}_0}.$
We introduce this expansion in  \eqref{1.3}-\eqref{1.5}, and keeping only the linear terms in $\varepsilon$ we get,

\begin{eqnarray}\label{2.5}
\partial_t \delta f(t,x,v)+ \tilde{\mathcal D}\delta  f(t,x,v)+ \partial_x U(t,x) v |\varphi'(E)|=0,\\\label{2.6}
 U(t,x)= 2\pi \int  |x-y| \rho(t,y)\, dy,\\
\rho (t,x):= \int \delta f(t,x,v)\, dv,     \label{2.7}
\end{eqnarray}
where $ \varphi'(E)$ denotes the derivative with respect to $E$ of  $\varphi(E).$The operator  $\tilde{\mathcal D}$ is defined in \eqref{2.8}. 
To introduce  the Antonov wave equation \cite{ant}, \cite{ant1}, and \cite{bt}
we  decompose $\delta f$ as the sum of its even and odd parts in $v$, 
\beq\label{2.9}
\delta f= \delta f_++\delta f_-,
\ene
 where
\beq\label{2.10}
\delta f_\pm(x,v)= \frac{1}{2}\left(   \delta f(x,v) \pm  \delta f(x,-v) \right).
\ene
Introducing \eqref{2.9} and \eqref{2.10} into \eqref{2.5}-\eqref{2.7}, taking into account the symmetry in $v,$ and observing  that $\tilde{\mathcal D}$ sends even, respectively odd, functions of $v$ into   odd, respectively even, functions of $v,$   we obtain the following system of equations,
\beq\label{2.11}
\partial_t \delta f_+ +\tilde{\mathcal D} \delta f_-=0,
\ene
\begin{eqnarray}\label{2.12}
\partial_t \delta f_-(t,x,v)+ \tilde{\mathcal D}\delta  f_+(t,x,v)+ \partial_x U(t,x) v |\varphi'(E)|=0,\\\label{2.13}
 U(t,x)= 2\pi \int  |x-y| \rho(t,y)\, dy,\\
\rho (t,x)= \int  \delta f_+(t,x,v)\, dv.     \label{2.14}
\end{eqnarray}
Taking the derivative with respect to $t$ of \eqref{2.12} and \eqref{2.14},  taking the derivative with respect to $x$ and to $t$ of \eqref{2.13}, and using \eqref{2.11} we obtain,
\begin{eqnarray}\label{2.15}
\partial_t^2 \delta f_-(t,x,v)- \tilde{\mathcal D}^2\delta  f_-(t,x,v)+ \partial^2_{t,x} U(t,x) v |\varphi'(E)|=0,\\\label{2.16}
 \partial^2_{t,x}U(t,x)= 2\pi \int \text{\rm sign} (x-y) \partial_t \rho(t,y)\, dy,\\
\partial_t \rho (t,x)=- \int \tilde{\mathcal D}\delta f_-(t,x,v)\, dv.     \label{2.17}
\end{eqnarray}
Further, if $\delta f_- (t,x,v) \in C^1_0(\tilde{\Omega}_0),$
\beq\label{2.17b}
\int \tilde{\mathcal D}\delta f_-(t,x,v)\, dv  = \int  v \partial_x \delta f_-(t,x,v)\, dv.
\ene
 Introducing \eqref{2.17} into \eqref{2.16} and using \eqref{2.17b} we obtain,
\beq\label{2.18}
 \partial^2_{x,t}U(t,x)=- 4 \pi \int v \delta f_-(x,v)\, dv.
 \ene
 Hence, by \eqref{2.18} equation \eqref{2.15} can be written as  follows
 \beq\label{2.19}
 \partial^2_t \delta f_-+ \tilde{\mathcal A} \delta f_-=0,
 \ene
 where $\tilde{\mathcal A}$ is the plane-symmetric Antonov operator,
 \beq\label{2.20}
 \tilde{\mathcal A}:= -\tilde{\mathcal D}^2-\tilde{\mathcal B},
 \ene
 with
 \beq\label{2.21}
\left(\tilde{\mathcal B}g\right)(x,v):=  4 \pi  v |\varphi'(E)|   \int u g(x,u)\, du.
\ene
Remark that if $\delta f$ is plane symmetric, i.e. if it satisfies \eqref{1.2}, also the $\delta f_\pm$ defined in \eqref{2.9},
\eqref{2.10} are plane symmetric. Moreover, for plane-symmetric $f$ we have that $\delta f_+$ is separately symmetric in $ x$ and in $v,$
\beq\label{2.21b}
\delta f_+(x,v)= \delta f_+ (-x,v), \qquad  \delta f_+(x,v)= \delta f_+ (x,-v),
 \ene
 and $\delta f_-$ is separately antisymmetric in $x$ and in $v$
 
 \beq\label{2.21c}
\delta f_-(x,v)= -\delta f_- (-x,v), \qquad  \delta f_-(x,v)= -\delta f_- (x,-v).
 \ene
 Moreover, if $\delta f_+$ is separately symmetric in $x$ and in $v,$  it is also  plane symmetric. Similarly,  if
 $\delta f_-$ is  is separately antisymmetric in $x$ and in $v,$  it is furthermore,  plane-symmetric. Furthermore, if
  $\delta f_-$ is separately antisymmetric in $x$ and in $v,$  the function  $\delta f_+$ defined as in \eqref{2.11}
  will be   separately symmetric in $x$ and in $v,$ if we take the  value of $\delta f_+$ at some initial time 
 separately symmetric in $x$ and in $v.$ Then, we can  consider the Antonov wave equation \eqref{2.19}  in the case of  solutions $\delta f_-$ that are separately antisymmetric in $x$ and in $v.$ Remark that  from the initial data at $t=0$ of the solution  $f(t,x,v)$ to the  system   \eqref{2.5}-\eqref{2.7} we obtain the initial  data at $t=0$  to solve the Antonov wave equation  \eqref{2.19} as follows. We have,  $\delta f_-(0)= \frac{1}{2}(f(0,x,v)-f(0,x,-v)).$ Further,  by \eqref{2.12} at $t=,0$ we get   $\partial_t \delta f_-(0,x,v)=- \tilde{\mathcal D}\delta  f_+(0,x,v)- \partial_x U(0,x) v |\varphi'(E)|, $ with   $\partial_x U(0,x) $ given by \eqref{2.13} and \eqref{2.14} at $t=0,$ and where  $\delta f_+(0,x,v)=  \frac{1}{2}(f(0,x,v)+f(0,x,-v)).$

 Below we properly define the Antonov operator $ \tilde{\mathcal A}$ as a selfadjoint operator in an appropriate Hilbert space of  phase-space density functions  that are separately antisymmetric in $x$ and in $v.$ 

To simplify the notation  we denote $\delta_-f(x,v)= g(x,v),$ and we write \eqref{2.19} as follows
 \beq\label{2.22}
 \partial^2_t g+ \tilde{\mathcal A} g=0,
 \ene
 for solutions $g$ that are separately antisymmetric in $x$ and in $v,$
\beq\label{2.23}
g(-x,v)= - g(x,v), \qquad g(x,-v)= -g(x,v).
\ene
Remark that the operators  $\tilde{\mathcal D}^2$ and  $\tilde{\mathcal B}$ send function that satisfy \eqref{2.23} into functions that fulfill \eqref{2.23}.

As in \cite{hrs} and \cite{ku}  we introduce a convenient Hilbert space to study \eqref{2.22}. Let us denote by $\tilde{\mathcal H}$ the Hilbert space of all complex-valued measurable functions  $g(x,v),$ defined on $\hat{\Omega}_0$ that satisfy \eqref{2.23} and
\beq\label{2.24}
\|g\|_{\tilde{\mathcal H}}:= \left[ \int_{\hat{\Omega}_0} |g(x,v)|^2  \frac{1}{|\varphi'(E)|}   dx dv\right]^{1/2} < \infty.
\ene 
Note  that $\varphi'(E) < 0$  on $\hat{\Omega}_0.$ The quadratic form of $\tilde{\mathcal A}$ as an operator in $\tilde{\mathcal H},$
$$
\left(\tilde{\mathcal A} f,f\right)_{\tilde{\mathcal H}},
$$
coincides with the Antonov functional in the celebrated Antonov stability bound \cite{ant}, \cite{ant1}, \cite{bt}. See Theorem 1.2 in page 10 of \cite{ku}. We find it convenient to extend \eqref{2.22} to complex-valued functions, but note that if the initial data in \eqref{2.22} is real valued the solution remains real valued. 

We denote by $\tilde{\mathcal H}_{\rm even}$ the Hilbert space of all complex-valued measurable functions $g(x,v),$ defined on $\hat{\Omega}_0$ that are separately symmetric in $x$ and in $v,$
\beq\label{2.25b}
g(-x,v)=  g(x,v), \qquad g(x,-v)= g(x,v),
\ene
with the norm \eqref{2.24}.

Let us denote by $C^n_{0,{\rm o}}(\hat{\Omega}_0)$ for $n=1,\dots$  the set of all the functions in $C^n_0(\hat{\Omega}_0)$ that fulfill  \eqref{2.23}. We have that $ \tilde{\mathcal D }$ is a closable operator from $D[\tilde{\mathcal D}]:= C^1_{0, {\rm o}}(\hat{\Omega}_0) \subset \tilde{\mathcal H}$ into $\tilde{\mathcal H}_{\rm even}.$ To prove this let us consider $ f_n \in  C^1_{0,{\rm o}}(\hat{\Omega}_0)$ such that $ f_n \to 0$ strongly in $\tilde{\mathcal H}$  as $ n  \to \infty$ and $  \tilde{\mathcal D} f_n \to g$ strongly in $\tilde{\mathcal H}_{\rm even}$  as $ n  \to \infty$ .Then, for every $ h \in C^1_{0}(\hat{\Omega}_0)$ that satisfies \eqref{2.25b}
$$ 
\left( g, h\right)_{\tilde{\mathcal H}_{\rm even}}=\lim_{n\to \infty} \left(\tilde{\mathcal D} f_n, h\right)_{\tilde{\mathcal H}_{{\rm even}}}=
- \lim_{n\to \infty} \left( f_n, \tilde{\mathcal D} h\right)_{\tilde{\mathcal H}}=0,
$$
and it follows that $g=0.$ This proves that $\tilde{\mathcal D }$ is closable. We denote by $\tilde{\mathcal D}_{\rm o}$ the closure of $\tilde{\mathcal D}.$ We define
\beq\label{2.31c}
\tilde{\mathcal A}_0 =  \tilde{\mathcal D}_{\rm o}^\dagger \tilde{\mathcal D}_{\rm o}.
\ene
By Von Neumann's theorem (Theorem 3.24 in page 275 of \cite{kato}) the operator $\tilde{\mathcal A}_0$ is selfadjoint in $\tilde{\mathcal H}$  and $D[\tilde{\mathcal A}_0]$ is a core for $\tilde{\mathcal D}_{\rm 0}.$  We call $\tilde{\mathcal A}_0$ the unperturbed Antonov operator. Further, observe that  for $ f \in C^2_{0,{\rm o}}(\hat{\Omega}_0)$ we have  $- \tilde{\mathcal D}^2 f= \tilde{\mathcal A}_0 f.$

By Lemma~4.8 of  of \cite{hrs} $\tilde{\mathcal B}$ is bounded and selfadjoint in $\tilde{\mathcal H}$ Then, by the Kato-Rellich theorem, see Theorem 4.3 in page 287 of \cite{kato}, the operator
\beq\label{2.32}
\tilde{\mathcal A }:= \tilde{\mathcal A}_0- \tilde{\mathcal B},
\ene
with domain
\beq\label{2.33}
D\left[ \tilde{\mathcal A}\right]=  D\left[  \tilde{\mathcal A}_0\right]
\ene
is selfadjoint in $\tilde{\mathcal H}.$

By Theorem~7.9 of \cite{hrs} $\tilde{{\mathcal A}}$ is positive with bounded inverse. Hence, the  Antonov wave equation \eqref{2.22}
 fits into the framework of abstract wave equations  in $\tilde{\mathcal H}.$  See \cite{kato2}
and  Section  10 in Chapter XI of\cite{rs3}.

\begin{remark}\label{rem} {\rm In the derivation of the Antonov wave equation \eqref{2.19}, \eqref{2.22} we used \eqref{2.17b}.
Note that  \eqref{2.17b} holds also for $ \delta f_- \in D[\tilde{\mathcal D}_{\text{\rm o}}].$ See Remark~ 5.18 and equation A.4 in 
\cite{hrs}.}
\end{remark} 

\subsection{The Antonov operator in energy-angle variables}
Action-angle variables \cite{arnold} are extensively used in the study of the Vlasov-Poisson system. See, for example, \cite{bsdn}, \cite{bt}, \cite{mo1}, \cite{mo2}, and \cite{ku}. Here we use the closely related energy-angle variables that  play an important role in later sections. We denote,
\beq\label{2.34}
E_{\text{\rm min}}:= {\text{\rm min}}_{(x,v)  \in \mathbb R^2} E(x,v)= E(0,0)=U_0(0).
\ene
Let us  introduce the following  angle variable \cite{arnold}, \cite{hrs},
\beq\label{2.36} 
\theta(x,E) := \frac{1}{T(E)} \int_{x_-(E)}^x \frac{1}{\sqrt{2(E-U_0(y))}}\, dy,\qquad  E>   E_{\rm min},\, x_-(E)\leq x \leq
x_+(E),
\ene
where  $T(E)$ is defined in \eqref{2.3}.  Note that $ \theta \in [0, 1/2].$
We define,
\beq\label{2.38}
I_0:= (E_{\rm min}, E_0),
\ene
and
\beq\label{2.38.b}
\Omega_0:=   \mathbb S_1 \times I_0,
\ene
where $\mathbb S_1$ is the circle, i.e.,  $[0,1]$ with $0$ and $1$ identified. 
Let $\mathcal M$ \cite{hrs} be the following mapping from $\hat{\Omega}_{0}\setminus\{(0,0)\}$ onto $\Omega_0$ 
\beq\label{2.38c}\begin{array}{l}
\mathcal M(x, v)= (\theta(x,E), E),          \qquad v \geq 0, \\[5pt]
\mathcal M(x, v)= (1- \theta(x,E), E),\qquad v < 0, E=\frac{1}{2}v^2 +U_0(x).
 \end{array}
 \ene
The mapping $\mathcal M$  is a bijection from $\hat{\Omega}_{0}\setminus\{(0,0)\}$ onto $\Omega_0.$
To construct the inverse of $\mathcal M$ let us  proceed as follows \cite{hrs}. For a fixed $ E\in I_0,$ let
\beq\label{2.40}
 t \in \mathbb R \to (X(t,E) ,V(t,E))
 \ene be the unique solution to the characteristic equations \eqref{2.1} that satisfies the initial condition
\beq\label{2.40b}
(X(0,E),V(0,E))= (x_-(E), 0).
\ene
For $(\theta, E) \in \Omega_0$
the quantity $(x,v)$ is given by,
\beq\label{2.41}
(x(\theta,E),v(\theta,E))= (X(\theta T(E),E),V(\theta T(E), E)),
\ene
where for $\theta  \in [0,  1/2], v \geq 0,$  for  $\theta  \in [1/2, 1], v \leq 0,$ and $v=0,$ for $\theta= 1/2.$
Furthermore,
\beq\label{2.42}
dx\,  dv= T(E)\, d\theta\, dE,
\ene
 For $g \in C_0(\hat{\Omega}_{0}\setminus\{ (0,0)\} )$  we define
\beq\label{2.46}
\left( {\mathbf U} g  \right)( \theta, E)= g(x(\theta, E), v(\theta, E) ).
\ene
Note, see Remark 5.13 of \cite{hrs}, that $g$ satisfies \eqref{2.23} if and only if $ \mathbf U g $ fulfllls

\begin{eqnarray}
\label{2.59}
\left( \mathbf U g  \right)(\theta, E)= - \left( \mathbf U g  \right)(1-\theta, E),\qquad  \theta \in [0,1], \\
\label{2.60} \left( \mathbf U g  \right)( \theta, E)= - \left( \mathbf U g  \right)(1/2-\theta,E),\qquad  \theta \in [0,1/2],\\\label{2.61} \left( \mathbf U g  \right)(\theta, E)= - \left( \mathbf U g  \right)( -\theta+3/2,E), \qquad \theta \in [1/2,1],
\end{eqnarray}
for all $ E \in I_0.$ 
We  designate by $\mathcal{ H}$ the Hilbert space of all complex-valued measurable functions $g(\theta, E)$ defined on   ${\Omega}_0$ that satisfy

\begin{eqnarray}
\label{2.62}
 g(\theta, E)= -  g(1-\theta, E),\qquad \theta \in [0,1], \\
\label{2.63a} g( \theta, E)= -  g(1/2-\theta,E), \qquad \theta \in [0,1/2],\\\label{2.64}
 g(\theta, E)= -  g( -\theta+3/2,E),\qquad  \theta \in [1/2,1],
\end{eqnarray}
 for all $ E \in I_0,$ with the norm
\beq\label{2.44}
\|g\|_{{\mathcal H}}:= \left[ \int_{{\Omega}_0} |g(\theta, E)|^2  \frac{1}{|\varphi'(E)|}  \, T(E) \,d\theta 
\,dE  \right]^{1/2} < \infty.
\ene 
By \eqref{2.42} for $g \in C_0(\hat{\Omega}_{0}\setminus\{ (0,0)\} )$ that satisfies \eqref{2.23} we have,
\beq\label{247}
\| {\mathbf U}  g\|_{{\mathcal H}}= \| g\|_{\tilde{{\mathcal H}}},
\ene
and $\mathbf U $ extends into a unitary operator from $\tilde{{\mathcal H}}$ onto ${\mathcal H}.$

We designate by $L^2_{\rm o}(S_{1})$ the closed subspace of $L^2(S_1)$ consisting of all the functions $g\in L^2(S_1)$ such that,
\begin{eqnarray}
\label{2.56c}
 g(\theta)= -  g(1-\theta), \theta \in [0,1], \\
\label{2.63ab} g( \theta)= -  g(1/2-\theta), \theta \in [0,1/2],\\\label{2.64b}
 g(\theta)= -  g( -\theta+3/2), \theta \in [1/2,1].
\end{eqnarray}
We have,
\beq\label{2.47b}
\mathcal H= \ds L^2\left(I_0,L^2_{\rm o}(S_{1}); \frac{T(E)}{|\varphi'(E)|} \, dE \right).
\ene
By $F_{S_1}$ we denote the Fourier series on $L^2(S_1)$ \cite{gr}
\beq\label{2.55}
({F_{S_1}} g)_n= \int_0^1 e^{-2\pi i n\theta } g(\theta)\, d\theta, \qquad n \in \mathbb Z.
\ene
The Fourier series $ F_{S_1}$ is an unitary  operator from $L^2(S_1)$  onto the standard Hilbert space $l^2(\mathbb Z)$ of complex-valued square-summable sequences $\{g_n \}, n \in \mathbb Z.$    The inverse operator $ F_{S_1}^{-1}$ is given by,
 \beq\label{2.56b}
  F_{S_1}^{-1}\{ g_n\}= \sum_{n \in \mathbb Z} e^{2\pi i n\theta} g_n,\qquad \{g_n\} \in l^2(\mathbb Z).
   \ene   
It follows from a simple calculation that if $ g\in L^2_{\rm o}(S_{1}),$
\begin{eqnarray}\label{2.65}
&(F_{S_1} g)_{2l+1}=0, \qquad l \in \mathbb Z, \\\label{2.66}
&( F_{S_1} g)_{2l}  = - ({ F_{S_1}}g)_{-2l}, \qquad l \in \mathbb N,\\ \label{2.67}
&( F_{S_1} g)_{0}=0.
\end{eqnarray}
Let us designate by $l^2(\mathbb N)$ the standard Hilbert space of  complex-valued square-summable sequences $\{g_l \}, l \in \mathbb N.$ 
For $ g \in L^2_{\rm o} (S_1)$ we denote,
\beq\label{2.74}
 \left(F_{S_{1},{\rm o}}g\right)_l= -i \sqrt{2} \int_0^1 \sin(4\pi l\theta) g(\theta) d\theta, \qquad  l \in \mathbb N.
 \ene 
 As $F_{S_1}$ is unitary from $L^2(S_1)$ onto $l^2(\mathbb Z),$ it follows from \eqref{2.65}-\eqref{2.67} that $F_{S_{1},{\rm o}}$ is unitary from $ L^2_{\rm o}(S_{1})$  onto $l^2(\mathbb N).$
Further if $\{ g_{l}\}_{l=1}^\infty\in l^2(\mathbb N),$
\beq\label{2.74b}
F_{S_{1,{\rm o}}}^{-1} \{ g_{l}\} =  i \sqrt{2} \sum_{l=1}^\infty \sin(4\pi  l\theta) g_{l},\qquad \{g_l\}\in l^2(\mathbb N).
\ene
 Let $\mathcal Q$ be the Hilbert space of all complex-valued measurable functions $g( E)$ defined on   $I_0$  that satisfy 
\beq\label{2.52}
\|g\|_{{\mathcal Q}}:= \left[ \int_{I_0 } |g( E)|^2 \, \frac{1}{|\varphi'(E)|}  \, T(E) 
\,dE  \right]^{1/2} < \infty.
\ene 
 Let us denote
 \beq\label{2nn.1}
 \hat{\mathcal H}= \oplus_{l=1}^\infty  \mathcal Q.
 \ene
 for $ g \in \mathcal H$ we define
\beq\label{2.55bb}
({\mathbf F} g)_l(E)=\left( F_{S_{1, {\rm o}} }g(\cdot, E)\right)_l, \qquad l \in \mathbb  \in \mathbb N.
\ene
 The operator $\mathbf F$ is unitary from $\mathcal H$  onto $ \hat{\mathcal H}.$
 
 Below we unitarily transform $\tilde{\mathcal A}_0$ to $\mathcal H$ and to $\hat{\mathcal H}.$ For this purpose we prepare the following proposition.

\begin{proposition}\label{lap}
Let $b_0$ be the following operator in $ L_{\rm o}^2(S_{1})$
\beq\label{new.1}
b_0 g(\theta):= - \frac{d^2}{d \theta^2}g(\theta),
\ene
with domain,
\beq\label{new2}
D[b_0]= H_2((0,1))\cap H_{1,{\rm p}}((0,1))\cap L^2_{\rm o}(S_{1}).
\ene
Then, $b_0$ is selfadjoint and moreover,
\beq\label{new.3}
b_0= \ds F_{S_{1,{\rm o}}}^{-1} (4\pi l)^2 F_{S_{1,{\rm o}}},
\ene
where $(4 \pi l)^2$ denotes  the operator of multiplication by $(4 \pi l)^2$ in $l^2(\mathbb N).$ Namely, it is 
the operator $\{g_l\}\to \{  (4\pi l)^2  g_l\},$ defined on the domain of all  $\{g_l\} \in l^2(\mathbb N)$ such that
  $\{ (4\pi l)^2  g_l\} \in l^2(\mathbb N).$
\end{proposition}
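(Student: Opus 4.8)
The plan is to identify $b_0$ directly with the operator $T:=F_{S_{1,{\rm o}}}^{-1}\,(4\pi l)^2\,F_{S_{1,{\rm o}}}$, which settles both assertions at once: since $(4\pi l)^2$ is multiplication by a real sequence on $l^2(\mathbb N)$ and $F_{S_{1,{\rm o}}}$ is unitary from $L^2_{\rm o}(S_1)$ onto $l^2(\mathbb N)$ (see \eqref{2.74}, \eqref{2.74b}), the operator $T$ is selfadjoint with domain $D[T]=\{g\in L^2_{\rm o}(S_1):\sum_{l\ge 1}(4\pi l)^4\,|(F_{S_{1,{\rm o}}}g)_l|^2<\infty\}$. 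Write $\phi_l(\theta):=i\sqrt{2}\,\sin(4\pi l\theta)$; by \eqref{2.74b} the family $\{\phi_l\}_{l\ge 1}$ (the images under $F_{S_{1,{\rm o}}}^{-1}$ of the unit vectors of $l^2(\mathbb N)$) is an orthonormal basis of $L^2_{\rm o}(S_1)$, each $\phi_l$ is $C^\infty$, vanishes at $\theta=0$ and $\theta=1$, and solves $-\phi_l''=(4\pi l)^2\phi_l$. Elementary trigonometric orthogonality shows the $\phi_l$ are also mutually orthogonal in $H_2((0,1))$, with $H_2((0,1))$-norm comparable to $(4\pi l)^2$.

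First I would prove the graph inclusion $T\subseteq b_0$. Let $g\in D[T]$ and set $g_l:=(F_{S_{1,{\rm o}}}g)_l$, so that $g=\sum_{l\ge 1}g_l\phi_l$ in $L^2_{\rm o}(S_1)$ with $\sum_{l\ge 1}(4\pi l)^4|g_l|^2<\infty$. Using the $H_2$-orthogonality and the growth of $\|\phi_l\|_{H_2((0,1))}$, the partial sums $g_N:=\sum_{l\le N}g_l\phi_l$ form a Cauchy sequence in $H_2((0,1))$, whose limit must coincide with $g$; hence $g\in H_2((0,1))$. Differentiating term by term, $-g_N''=\sum_{l\le N}(4\pi l)^2g_l\phi_l\to Tg$ in $L^2$, and since differentiation is continuous in the sense of distributions we get $-g''=Tg$. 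By the Sobolev embedding $H_2((0,1))\hookrightarrow C^1([0,1])$ \cite{adams}, $g_N\to g$ in $C^1([0,1])$; since $g_N(0)=g_N(1)=0$ for every $N$, this forces $g(0)=g(1)=0$, so $g\in H_{1,{\rm p}}((0,1))$. Therefore $g\in D[b_0]$ and $b_0g=-g''=Tg$, cf.\ \eqref{new.1}--\eqref{new2}.

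Next I would verify that $b_0$ is symmetric. For $f,g\in D[b_0]$ both functions belong to $H_2((0,1))\subset C^1([0,1])$, and combining $f(0)=f(1)$ (from $H_{1,{\rm p}}((0,1))$) with the reflection identity $f(0)=-f(1)$, which follows from \eqref{2.56c} and continuity, forces $f(0)=f(1)=0$, and likewise $g(0)=g(1)=0$. Integrating by parts twice one gets $(b_0f,g)_{L^2(S_1)}=[-f'\,\overline{g}+f\,\overline{g'}]_0^1+(f,b_0g)_{L^2(S_1)}$, and the boundary term vanishes because $f$ and $g$ are zero at $0$ and $1$. Hence $b_0$ is symmetric; it is densely defined because $D[b_0]\supseteq D[T]$ contains all finite linear combinations of the $\phi_l$. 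Taking adjoints in $T\subseteq b_0$ yields $b_0^\dagger\subseteq T^\dagger=T$, while symmetry gives $b_0\subseteq b_0^\dagger$; consequently $b_0\subseteq b_0^\dagger\subseteq T\subseteq b_0$, so $b_0=T$. This proves simultaneously that $b_0$ is selfadjoint and that \eqref{new.3} holds.

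The one mildly delicate point — and the step I expect to be the main obstacle — is the domain bookkeeping. I obtain the converse inclusion $D[b_0]\subseteq D[T]$ not by estimating the Fourier coefficients of a generic element of $D[b_0]$, but from the abstract fact that a symmetric extension of a selfadjoint operator must coincide with it; consequently it is essential to check that the boundary terms in the double integration by parts genuinely vanish. For that, the antisymmetry built into $L^2_{\rm o}(S_1)$ through \eqref{2.56c} is indispensable: the bare periodicity $g(0)=g(1)$ coming from $H_{1,{\rm p}}((0,1))$ would not by itself kill the terms involving $f'(0),f'(1),g'(0),g'(1)$.
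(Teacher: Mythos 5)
Your proof is correct, and the logical route differs from the paper's in one substantive way. The paper proves the inclusion $b_0\subseteq F_{S_{1,{\rm o}}}^{-1}(4\pi l)^2F_{S_{1,{\rm o}}}$ concretely: it first uses \eqref{2.56c} to get $g(0)=g(1)=0$ and $g'(0)=g'(1)$ for $g\in D[b_0]$ (so $D[b_0]\subset H_{2,{\rm p}}((0,1))$), then integrates by parts against the basis $\sin(4\pi l\theta)$ to obtain $(F_{S_{1,{\rm o}}}b_0g)_l=(4\pi l)^2(F_{S_{1,{\rm o}}}g)_l$, and only sketches the reverse inclusion via \eqref{2.74b}; selfadjointness then follows from unitary equivalence. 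You invert the emphasis: you prove $T\subseteq b_0$ in full detail (the $H_2$-orthogonality of the $\phi_l$, Cauchy partial sums, and the $C^1$ Sobolev embedding to propagate the boundary conditions — exactly the content the paper compresses into ``Using \eqref{2.74b} we prove that $F_{S_{1,{\rm o}}}^{-1}(4\pi l)^2F_{S_{1,{\rm o}}}\subseteq b_0$''), and you replace the paper's explicit Fourier-coefficient computation for the other inclusion by the abstract maximality argument: $b_0$ is symmetric (your double integration by parts, with the boundary terms killed by $f(0)=f(1)=0$, which as you correctly stress needs the antisymmetry \eqref{2.56c} and not just periodicity), $T$ is selfadjoint, and a symmetric extension of a selfadjoint operator coincides with it. Both arguments rest on the same two ingredients — the boundary identities forced by \eqref{2.56c} and an integration by parts — but yours buys the equality $b_0=T$ from operator-theoretic generalities at the cost of a separate symmetry check, while the paper's buys it from two explicit coefficient computations; your version also documents the direction the paper leaves terse.
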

\begin{proof} Since the functions in $H_{2}((0,1))$ as well as its first derivatives extend continuously to zero and to one, it follows from \eqref{2.56c} that  for $g \in  D[b_0]$ we have, $ g(0)=g(1)=0,$ and $g'(0)= g'(1).$ Then,
$D[b_0] \subset  H_{2,{\rm p}}((0,1)).$ Integrating by parts we get
\beq\label{new.4}
\left( F_{S_{1,{\rm o}}} b_0\, g\right)_l=  (4\pi l)^2  \left( F_{S_{1,{\rm o}}} g\right)_l, \qquad g\in D[b_0], l=1,\dots.
\ene
Hence, $b_0 \subset F_{S_{1,{\rm o}}}^{-1} (4\pi l)^2 F_{S_{1,{\rm o}}}.$ Using \eqref{2.74b} we prove that 
$ F_{S_{1,{\rm o}}}^{-1} (4\pi l)^2 F_{S_{1,{\rm o}}}\subset b_0.$ Then,   \eqref{new.3} follows and $b_0$ is selfadjoint as it is unitarily equivalent to the operator of multiplication by
$(4 \pi l)^2$ in $l^2(\mathbb N)$ that is selfadjoint.
\end{proof}
Let $q_0$ be the quadratic form in $L^2_{\rm o}(S_{1})$
\beq\label{new.5}
q_0(f,g)= (\partial_\theta f, \partial_\theta g)_{L^2(S_1)}, \qquad f,g \in D[q_0]= H_{1,{\rm p}}((0,1))\cap L_{\rm o}^2(S_{1}).
\ene
We have\beq\label{new.6}
q_0(f,g)= \sum_{l=1}^\infty     (4\pi l) \left( F_{S_{1, {\rm o}}}f\right)_l  (4\pi l) \overline{\left(F_{S_{1, {\rm o}}} g\right)_l}. 
\ene
Hence, by \eqref{new.3}, \eqref{new.6}, and Theorem 2.1 in page 322 of \cite{kato} $q_0$ is the quadratic form of
$b_0.$  

We denote by  $\tilde{\mathcal H}_{\rm ex}$  the Hilbert space of all complex-valued measurable functions $g(x,v)$ defined in $\hat{\Omega}_0$  with the norm \eqref{2.24}.  Similarly, we   designate by $\mathcal{ H}_{\rm ex}$ the Hilbert space of all complex-valued measurable functions $g(\theta, E)$ defined on   ${\Omega}_0$  with the norm \eqref{2.44}. Note that the functions in $\tilde{\mathcal H}_{\rm ex}$ and in $\mathcal H_{\rm ex}$ are not restricted by any symmetry assumption. By \eqref{2.42} the operator $\mathbf U$ defined in \eqref{2.46} extends to an unitary operator from   $\tilde{\mathcal H}_{\rm ex}$ onto  $\mathcal H_{\rm ex}.$ We use the same notation for $\mathbf U$ as an operator from  $\tilde{\mathcal H}$ onto    ${\mathcal H}$ and as an operator  from $\tilde{\mathcal H}_{\rm ex}$ onto  ${\mathcal H}_{\rm ex}.$ It will be clear from the
context to which spaces we are applying this operator.

 We remark that by the chain rule for derivatives, for $ f\in C^1_{0}(\hat{\Omega}_0\setminus \{(0,0)\})$ 
 \beq\label{new.5b}
 \left( \mathbf U  \tilde{\mathcal D} f\right)(\theta, E)= \frac{1}{T(E)} \partial_\theta  \left(\mathbf U f\right)(\theta, E).
 \ene
Hence, for $ f \in D[\tilde{\mathcal D}_{\rm 0}]$ and $g \in C^1_{0}(\hat{\Omega}_0\setminus \{(0,0)\}),$

 \beq\label{new.7}\begin{array}{l}
 \left(\mathbf U \tilde{\mathcal D}_{\rm o} f, \mathbf U g \right)_{{\mathcal H}_{\rm ex}}=
 \left( \tilde{\mathcal D}_{\rm o} f, g \right)_{\tilde{\mathcal H}_{\rm ex}}= - \left(f, \tilde{\mathcal D} g \right)_{\tilde{\mathcal H}_{\rm ex}}= \\  - \left( \mathbf U f,  \frac{1}
{T(E)}\partial_\theta \mathbf Ug \right)_{{\mathcal H}_{\rm ex}}=
 \left( \frac{1}
{T(E)} \partial_\theta\mathbf U f,\mathbf U g \right)_{{\mathcal H}_{\rm ex}}.
  \end{array}
 \ene
 By \eqref{new.7}
 \beq\label{new.9}
 \mathbf U \tilde{\mathcal D}_{\rm o} f = \frac{1}
{T(E)} \partial_\theta \mathbf U f, \qquad f \in D[ \tilde{\mathcal D}_{\rm o}].
 \ene
 We denote by $\mathcal A_0$ the following selfadjoint operator in $\mathcal H,$
 \beq\label{2.50}
{\mathcal A}_0:= \frac{1}{T(E)^2} b_0,
\ene
with the domain 
\beq\label{2.51}
D[{\mathcal A}_0]=\left \{ \begin{array}{l} g \in \mathcal H : \hbox{\rm for a.e.}\, E \in I_0,
g(\cdot, E) \in D[b_0] \, \hbox{\rm and}\\[5pt]
\int_{I_0} \|( g)(\cdot, E)\|^2_{H_2([0,1])} \,\frac{1}{  T(E)^3 |\varphi'(E)|}    \,dE   < \infty
\end{array}\right\}.
\ene 
By Theorem 2.1 in page 322 of \cite{kato} the quadratic form of $\mathcal A_0$ is given by,
\beq\label{new.10}
a_0(f,g):= \int_{I_0}  q_0(f(\cdot,E),g(\cdot,E))  \,\frac{1}{  T(E)|\varphi'(E)|}       dE,
\ene
 with domain,
 \beq\label{new.11}
 D[{a}_0]=\left \{ \begin{array}{l} g \in \mathcal H : \hbox{\rm for a.e.}\, E \in I_0,
g(\cdot, E) \in D[q_0]  \,\hbox{\rm and}\\[5pt]
\int_{I_0} q_0(g,g)\,\frac{1}{  T(E)|\varphi'(E)|}     \,dE   < \infty
\end{array}\right\}.
\ene 
 Moreover, by Theorem 2.1 in page 322 of \cite{kato} the quadratic form of $\tilde{\mathcal A}_0$ is given by,
 \beq\label{new.12}
 \tilde{a}_0(f,g):= \left( \tilde{\mathcal D}_{\rm o} f,   \tilde{\mathcal D}_{\rm o} g \right)_{\tilde{\mathcal H}_{\rm even}},
 D[\tilde{a}_0]:= D[\tilde{\mathcal D}_{\rm o}].
 \ene
 Further, by \eqref{new.9} and \eqref{new.10}-\eqref{new.12}, for $ f \in D[\tilde{\mathcal A}_0]$ and $ g \in D[\tilde{\mathcal D}_{\rm o}]$ we have,
 \beq\label{new.13}
 \left( \mathbf U \tilde{\mathcal A}_0 f, \mathbf U g \right)_{\mathcal H}=\left( \tilde{\mathcal A}_0f, g \right)_{\tilde{\mathcal H}}= \tilde{a}_0(f,g)= a_0(\mathbf Uf,\mathbf U g).
 \ene
By\eqref{new.13} and  Theorem 2.1 on page 322 of \cite{kato}  $\mathbf U f \in D[{\mathcal A}_0]$ and
\beq\label{new.14}
{\mathcal A}_0 \mathbf U f = \mathbf U \tilde{\mathcal A}_0 f, \qquad f \in D[\tilde{\mathcal A}_0].
\ene
Then,
\beq\label{new.15}
{\mathcal A}_0= \mathbf U \tilde{\mathcal A}_0 \mathbf U^{-1}.
\ene
With $\mathbf F$ defined in \eqref{2.55bb} we define,
\beq\label{new.16}
\hat{\mathcal A}_0:= \mathbf F {\mathcal A}_0  \mathbf F^{-1}.
\ene
 Moreover, by \eqref{new.3}  and  \eqref{2.50}, according to  the direct sum in \eqref{2nn.1} 
  \beq\label{2.57}
 \hat{\mathcal A}_0:= \mathbf F  {\mathcal A}_0 \mathbf F^{-1}= \oplus_{l \in \mathbb N} \frac{(4\pi l)^2}{T(E)^2 }
,\ene
where by $ \frac{1}{T(E)^2 }(4\pi l)^2$ we denote the operator of multiplication by $ \frac{1}{T(E)^2 }(4\pi l)^2.$
As  $T(E)$ is continuously differentiable and strictly increasing,  it follows from  \eqref{2.57} that the spectrum of $ \hat{\mathcal A}_0$ is  absolutely continuous and it is given by
\beq\label{2.58}
\sigma(\hat{\mathcal A}_0)=\sigma_{\text{\rm ac}}(\hat{\mathcal A}_0)= \cup_{l \in \mathbb N}
\left\{  \frac{(4\pi l)^2}{T(E)^2 }
 : E \in [E_{\rm min}, E_0]\right\}.
\ene
Further, by \eqref{2.57} the spectrum of $\mathcal A_0$ is also absolutely continuous and it is given by the right hand side of \eqref{2.58}.

 We denote by  $R_0(z)$ the resolvent ${\mathcal A}_0$  for  $z \in \rho({\mathcal A}_0).$ 
 By \eqref{2.74}, \eqref{2.74b}, and  \eqref{2.57} $R_0(z)$ is an integral operator,
\beq\label{2.75}
\left(R_0(z)g\right)(\theta, E)= \int_{\mathbb S_1} R_0(z, E,\theta,  \theta_1, ) g(\theta_1,E) d\theta_1,
\ene 
 where the integral kernel $R_0(z, E,\theta,  \theta_1)$ is given by,
 \beq\label{2.76}
 R_0(z, E,\theta, \theta_1):= 2  \sum_{l =1}^\infty \sin(4\pi l \theta) \left( \frac{(4\pi l)^2}{T(E)^2} -z\right)^{-1} \sin(4\pi l \theta_1).
 \ene
 In  Theorem 5.19 of \cite{hrs} it is proved that   $\tilde{\mathcal B}$ is relatively compact with respect to  $\tilde{\mathcal A}_0$ and that by Weyl's theorem the essential spectrum of $\tilde{\mathcal A}$ is given by,
\beq\label{2.80}
\sigma_{\rm ess}(\tilde{\mathcal A})= \cup_{l\in \mathbb N}
\left\{  \frac{(2\pi l)^2}{T(E)^2 }
 : E \in [E_{\rm min}, E_0]\right\}.
\ene
We denote,
\beq\label{2.81a}
\mathcal A:= \mathbf U \tilde{\mathcal A}\, \mathbf U^{-1}= {\mathcal A}_0- \mathcal B,
\ene
where,
\beq\label{2.81b}
\mathcal B:= \mathbf U \tilde{\mathcal B} \mathbf U^{-1}.
\ene
Then,
 \beq\label{2.81c}
 \sigma_{\rm ess}({\mathcal A}) = \cup_{l \in \mathbb N }
\left\{  \frac{(4\pi l)^2}{T(E)^2 }
 : E \in [E_{\rm min}, E_0]\right\}.
\ene
By \eqref{2.81a}, equation \eqref{2.22} is unitarily equivalent to,
 \beq\label{2.81d}
 \partial^2_t g+ {\mathcal A} g=0.
 \ene

 \subsection{The spectral representation of $\mathcal A_0$}\label{subspec}
 We end this section constructing a spectral representation of the unperturbed Antonov operator $\mathcal A_0$ in terms of {\it trace maps} that plays a crucial role in the proof, that we give in  Theorem~\ref{propgf}, that the generalized Fourier maps are onto the Hilbert space that we use in this spectral represention. We denote,
  \beq\label{sp.1}
  \beta_l(E):= \frac{(4\pi l)^2}{ T^2(E)},\qquad
 l=1,\dots,
  \ene
 and
  \beq\label{sp.2}
 \beta_{l,\rm min}= \frac{(4\pi l)^2}{T(E_0)^2}, \qquad   \beta_{l,\rm max}= \frac{(4\pi l)^2}{T(E_{\rm min})^2}, \qquad
 l=1,\dots.
 \ene

   Since $T(E)$ is increasing the function $\beta_l(E)$ is invertible. 
   We denote by 
   \beq\label{sp.2b}
   E_l(\beta), \qquad \beta \in [\beta_{l,\rm min}, \beta_{l,\rm max}],
   \ene
    the inverse function that fulfills,
  \beq\label{s.42bc}
  E_l(\beta_l(E))= E, \qquad E \in [E_{\rm min}, E_0]. 
  \ene
    By the inverse function theorem,
   \beq\label{s.43}
 p_l(\beta):= E_l'(\beta)= -\frac{T^3(E_l(\beta))}{2 (4\pi l)^2 T'(E_l(\beta))}.
  \ene
 We denote by $\mathcal H_{\rm sp}$ the  Hilbert space
 \beq\label{sp.4}
 \mathcal H_{\rm sp}:= \oplus_{l=1}^\infty L^2((\beta_{l,\rm min}, \beta_{l,\rm max})).
 \ene
We define the following unitary operator $U$ from $\hat{\mathcal H}$ onto $\mathcal H_{\rm sp},$ 
 \beq\label{sp.5}
 \begin{array}{l}
 \left(  U \{g_{l}\}\right)_j(\beta):= 
 \ds  \frac{\sqrt{ T(E_j(\beta)) |p_j(\beta)}|}{\sqrt{|\varphi'(E_j(\beta)|}} g_{j} (E_j(\beta)), \beta \in( \beta_{j,\rm min}, \beta_{j,\rm max}),\{g_{l}\}\in \hat{\mathcal H}, \\  j=1,\dots.
\end{array}
 \ene  
Further, we define,
\beq\label{sp.5b}
\mathcal F:=  U \mathbf F,
\ene
where $\mathbf F$ is defined in \eqref{2.55bb}.
The operator $\mathcal F$ is unitary from $\mathcal H$ onto $\mathcal H_{\rm sp}.$
 
 We define
 \beq\label{sp.6}
 \mathcal A_{0,{\rm sp}}:=     \mathcal  F   \mathcal A_0 \,  \mathcal F^{-1}.
 \ene
  Then, by \eqref{2.57},
  \beq\label{sp.7}
\left( \mathcal A_{0, \rm{sp}}\{ g_l\}\right)_{j}(\beta)=\beta g_{j}(\beta),\qquad  \beta \in( \beta_{l,\rm min}, \beta_{l,\rm max}), \{g_l\} \in \mathcal H_{\rm sp},  j=1,\dots,.
\ene
In other words, under the direct sum \eqref{sp.4}
\beq\label{sp.8}
\mathcal A_{0,{\rm sp}}= \oplus_{l=1}^\infty \beta,
\ene
where $\beta$ is the operator of multiplication by $\beta$ in  $L^2((\beta_{l,\rm min}, \beta_{l,\rm max})).$ 
Note  moreover, that for any  Borel set $\Delta$
\beq\label{sp.8b}
\left(\mathcal F E_{\mathcal A_0 }(\Delta) f\right)_l(\beta)= \chi_\Delta(\beta) \left(\mathcal F f\right)_l(\beta), \qquad \beta \in( \beta_{l,\rm min} \beta_{l,\rm max}),\,\, l=1,\dots.
\ene

We now write the spectral representation of $\mathcal A_0$ in a convenient way in terms of {\it trace maps}. We write  $\sigma(\mathcal A_{0,{\rm sp}})$ in the following way
\beq\label{sp.9}
\sigma (\mathcal A{_0,\text{\rm sp}})= \cup_{n=1}^\infty \Delta_n,
\ene
where $\Delta_n, n=1,\dots,$ are bounded disjoint intervals, $\Delta_n \cap \Delta_j=\emptyset,$ for $n\neq j,$ such  that there is a positive integer $m_n$ and positive integers $1 \leq l_{n,1} < l_{n,2} <  \dots <l_{n, m_n}$ with the property that for every $\beta \in \Delta_n,$ we have,
\beq\label{sp.10}
 \beta \in \ds \cap_{l=l_{n,1}}^{l_{n,m_n}} [\beta_{l,{\rm min}}, \beta_{l,{\rm max}}], \qquad \beta\notin[ \beta_{l,{\rm min}}, \beta_{l,{\rm max}}], \qquad l \notin \{l_{n,1},\dots l_{n,m_n}\}.
\ene 
Moreover, we take $m_1=1,$ and $\Delta_1 \subset [ \beta_{1,{\rm min}}, \beta_{1,{\rm max}}].$

Note that $\mathcal H_{\rm sp}$ can be written as follows,
\beq\label{sp.11}
\mathcal H_{\rm sp}= \oplus_{n=1}^\infty L^2(\Delta_n, \mathbb C_{m_n}).
\ene
Furthermore under the direct sum \eqref{sp.11}  $\mathcal A_{0,{\rm sp}}$ is again given by,
\beq\label{sp.18}
\mathcal A_{0,{\rm sp}}= \oplus_{n=1}^\infty \beta,  
\ene
with $\beta$ the  operator of multiplication by $\beta$ in  $ L^2(\Delta_n, \mathbb C_{m_n}).$  Remark that in $ L^2(\Delta_n, \mathbb C_{m_n})$ the spectrum of $\mathcal A_{0,{\rm sp}}$ has multiplicity $m_n.$

Recall that   the space $\mathcal H$ can be realized as in \eqref{2.47b}.
Further,  by \eqref{sp.8b}, under the direct sum \eqref{sp.11}, for any Borel set $\tilde{\Delta}$ contained in the interior of $\Delta_n,$ and  any $f \in \mathcal H \cap C(I_0,(S_{1,0}))$, 
\beq\label{sp.19}
\left(\mathcal F E_{\mathcal A_0 }(\tilde{\Delta}) f\right)(\beta)= \chi_{\tilde{\Delta}}(\beta) \mathcal L_{n}(\beta) 
\mathbf F f,
\ene
where, for $\beta$ in the interior of $\Delta_n$ the {\it trace map}  operator   $\mathcal L_n(\beta)$ is defined as follows. We denote by $\mathcal H_{{\rm sp}, m_n}$ 
the vector subspace of $\mathcal H_{\rm sp}$  consisting of all the vectors in $\mathcal H_{\rm sp}$ with  components in  $L^2(\Delta_n, \mathbb C_{m_n})$  continuous. Then, $\mathcal L_n(\beta)$ is the {\it trace map} operator  from $\mathcal H_{{\rm sp}, m_n}$  into $\mathbb C_{m_n}$ is given by, 
\beq\label{sp.20}
\begin{array}{r}
\mathcal L_n(\beta) \{f_l\}_{l=1}^{m_n}= \ds  \left( \frac{\sqrt{ T(E_{l_{n,1}}(\beta)) |p_{l_{n,1}}(\beta)|}}{\sqrt{|\varphi'(E_{l_{n,1}}(\beta))|}} f_{l_{n,1}}(E_{l_{n,1}}(\beta)),   
\dots, \right.\\  \noalign{\medskip}\left. \ds \frac{\sqrt{ T(E_{l_{n,m_n}}(\beta)) |p_{l_{n,m_n}}(\beta)|}}{\sqrt{|\varphi'(E_{l_{n,m_n}}(\beta))|}}f_{l_{n,m_n}}(E_{l_{n,m_n}}(\beta)) \right).
\end{array}
\ene
We call $\mathcal L_n(\beta)$ a {\it trace map} because it takes the trace of  $ \{f_l\}_{l=1}^{m_n}$ at the value
$\beta$ of the spectral parameter.

\section{Spectral analysis of the Antonov operator}\label{specan}
By \eqref{2.81c}  the essential spectrum of the Antonov operator $\mathcal A$ coincides with the spectrum of the unperturbed Antonov operator $\mathcal A_0.$ In this section  we prove that the absolutely continuous spectrum of $\mathcal A$ coincides with its essential spectrum. Further, we prove that the part of the singular spectrum that is embedded  in the absolutely continuous spectrum, if any, is contained in a closed set of measure zero. 
  We first obtain some important results that we use later. We define the relevant operators in  appropriate spaces of H\"older continuous functions. We find it convenient to use spaces of H\"older continuous functions by the following reason. The spectral representation of the unperturbed Antonov operator is obtained by means of the energy-angle variables. The perturbation term in the Antonov operator in energy angle variables  is an integral operator (see \eqref{2.21}, \eqref{2.81a} and \eqref{2.81b}). However, the kernel of this integral operator is singular at the minimum  of the Hamiltonian for the motion in the potential of the steady state, the so called elliptic point of the Hamiltonian. This singularity has to be carefully analyzed, and it is possible to do this in spaces of  H\"older continuous functions, that  require less regularity than other spaces, like Sobolev spaces for example. 

 \subsection{The operator $\mathcal B R_0(z)$}\label{opg}
\label{sec2}
We denote by $G(z)$ the following operator
\beq\label{s.1}
G(z):= \frac{1}{4\pi  v(\theta, E)| \varphi'(E)|}\,\mathcal B R_0(z), \qquad z \in \rho(\mathcal A_0).
\ene
 We have.
\begin{theorem}\label{theo.s1}
For any $ 0 < \alpha_1 \leq 1,$ and $ 0< \alpha_2 < 1/2$  and $ z \in   \rho(\mathcal A_0)$ the following holds.
\begin{enumerate}
\item[{\rm (a)}]
    $G(z) \in  B({C}_{\alpha_1}(I_0, L^2_{\rm o}(S_{1})), \hat{C}_{\alpha_2}(I_0, L^2_{\rm o}(S_{1}))).$  
 \item[{\rm (b)}]
  The function  $ z \in  \rho(\mathcal A_0) \rightarrow G(z)$ is analytic in the operator norm  in \linebreak 
  $B({C}_{\alpha_1}(I_0, L^2_{\rm o}(S_{1})), \hat{C}_{{\alpha_2}}(I_0, L^2_{\rm o}(S_{1}))).$  
  
 \item[{\rm (c)}]
\beq\label{s.1b}
\ds \lim_{|{\rm Im}z|\to \infty}\|G(z)\|_{\ds B({C}_{\alpha_1}(I_0, L^2_{\rm o}(S_{1})),\hat{C}_{\alpha_2}(I_0, L^2_{\rm o}(S_{1})))}=0.
\ene
\item[{\rm (d)}]
For $ z \in \rho(\mathcal A_0),$ the operator $G(z)$ is compact from ${C}_{\alpha_1}(I_0, L^2_{\rm o}(S_{1}))$ into
$\hat{C}_{\alpha_2}(I_0, L^2_{\rm o}(S_{1})).$
\end{enumerate}
\end{theorem}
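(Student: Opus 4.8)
The plan is to turn $G(z)$ into a concrete integral operator and then read off (a)--(d) from the regularity of its kernel. First I would compute $\mathcal B R_0(z)$ in the energy--angle variables. Writing $\mathcal B=\mathbf U\tilde{\mathcal B}\mathbf U^{-1}$ with $\tilde{\mathcal B}$ as in \eqref{2.21} and carrying out the velocity integration through the substitution $v'\,dv'=dE'$ together with the antisymmetry \eqref{2.23}, the operator $\tilde{\mathcal B}$ becomes the multiplication by $4\pi\,v(\theta,E)\,|\varphi'(E)|$ of an average of $R_0(z)h$ over the energy fibre $\{E'\,:\,U_0(x(\theta,E))\le E'<E_0\}$ through the point $x(\theta,E)$ of \eqref{2.41}; and the explicit factor $4\pi\,v(\theta,E)\,|\varphi'(E)|$ is exactly what the prefactor in \eqref{s.1} removes. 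Thus $G(z)$ is an integral operator whose dependence on $z$ enters only through the explicit resolvent kernel \eqref{2.76}, and whose dependence on $(\theta,E)$ enters only through the energy--angle geometry --- the period $T(E)$, the turning points $x_\pm(E)$, the position map $x(\theta,E)$ and the angle $\theta(x,E)$ of \eqref{2.36} --- which is continuously differentiable away from the turning points by Proposition~\ref{prop2.1}, Lemma~\ref{lemma2.2} and Proposition~\ref{per}. The only singular features are the square-root vanishing $v=\sqrt{2(E-U_0(x))}\to0$ as $x\to x_\pm(E)$ and the fixed cutoff $E_0$, attained at the edges $x=\pm R_0$ of the support; the auxiliary estimates needed to control these are those of Appendix~\ref{apex}.

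For part (a), with this representation I would estimate the $\hat C_{\alpha_2}(I_0,L^2_{\rm o}(S_1))$-norm of $G(z)h$ by $\|h\|_{C_{\alpha_1}(I_0,L^2_{\rm o}(S_1))}$. The supremum part is immediate: for each $E$, the operator $R_0(z;E):=(T(E)^{-2}b_0-z)^{-1}$ in the angle variable (with $b_0$ from Proposition~\ref{lap}) is bounded on $L^2_{\rm o}(S_1)$ with norm depending only on $T(E)$ and $z$, and the fibre integration over $E'\in[U_0(x),E_0]$ is a bounded averaging, so $\sup_E\|(G(z)h)(\cdot,E)\|_{L^2_{\rm o}(S_1)}\lesssim\sup_E\|h(\cdot,E)\|_{L^2_{\rm o}(S_1)}$. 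The H\"older seminorm in $E$ is where the smoothing and the ceiling $1/2$ appear: a difference in $E$ acts on (i) the rational functions of $E$ in the kernel \eqref{2.76}, controlled because $T\in C^1$ and is bounded above and below (Proposition~\ref{per}) and $b_0R_0(z;E)=T(E)^2I+T(E)^2z\,R_0(z;E)$ is bounded; and on (ii) the lower limit $U_0(x(\theta,E))$ and the angle argument $\theta(x(\theta,E),E')$, where the weight $(T(E')\sqrt{2(E'-U_0(x))})^{-1}$ makes $\int_{U_0(x)}^{E_0}(\cdots)\,dE'$ an Abel-type integral in $U_0(x)$ which gains H\"older regularity, but only up to --- and not including --- exponent $1/2$, the ceiling being forced by the fixed upper limit $E_0$ reached at $x=\pm R_0$. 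Choosing any $\beta\in(\alpha_2,1/2)$ this yields $G(z)\in B(C_{\alpha_1}(I_0,L^2_{\rm o}(S_1)),C_\beta(I_0,L^2_{\rm o}(S_1)))$, and since $C_\beta(I_0,\mathcal G)\subset\hat C_{\alpha_2}(I_0,\mathcal G)$ for $\beta>\alpha_2$ (Appendix~\ref{apex}), part (a) follows.

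Parts (b) and (c) then follow with less effort. Only $R_0(z)$ carries the $z$-dependence, so the resolvent identity gives $G(z)-G(z_0)=(z-z_0)\,G(z)\,R_0(z_0)$; since the fixed-$E$ estimates above show that $R_0(z_0)$ is bounded on $C_{\alpha_1}(I_0,L^2_{\rm o}(S_1))$, the map $z\mapsto G(z)$ is norm-continuous and $(G(z)-G(z_0))/(z-z_0)\to G(z_0)R_0(z_0)$ in $B(C_{\alpha_1}(I_0,L^2_{\rm o}(S_1)),\hat C_{\alpha_2}(I_0,L^2_{\rm o}(S_1)))$, which is (b), with $G'(z_0)=G(z_0)R_0(z_0)$. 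For (c) I would make the part (a) estimates quantitative in $z$: they bound $\|G(z)\|$ by the supremum and H\"older seminorms in $(\theta,E)$ of the kernel \eqref{2.76} and of its $E$-derivative, which are governed by sums such as $\sum_{l\ge1}(4\pi l)^2|(4\pi l)^2/T(E)^2-z|^{-2}$; splitting such a sum at $l\sim|{\rm Im}\,z|^{1/2}$ shows it is $O(|{\rm Im}\,z|^{-1/2})$, whence \eqref{s.1b}. Finally, part (d) is a vector-valued Arzel\`a--Ascoli argument: by part (a), $G(z)$ maps bounded subsets of $C_{\alpha_1}(I_0,L^2_{\rm o}(S_1))$ into bounded subsets of $C_\beta(I_0,\mathcal G_1)$ with $\beta>\alpha_2$ and $\mathcal G_1=C_\gamma(S_1)\cap L^2_{\rm o}(S_1)$ for some $\gamma>0$, because the fibre average, pulled back by the smooth map $\theta\mapsto x(\theta,E)$, is H\"older in $\theta$; since $\mathcal G_1\hookrightarrow L^2_{\rm o}(S_1)$ is compact and $\overline{I_0}$ is bounded, such a subset is precompact in $C(\overline{I_0},L^2_{\rm o}(S_1))$, and the uniform $\beta$-H\"older bound upgrades this to precompactness in $C_{\alpha_2}(I_0,L^2_{\rm o}(S_1))$, hence in $\hat C_{\alpha_2}(I_0,L^2_{\rm o}(S_1))$.

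The main obstacle is the estimate in part (a): tracking how the square-root degeneracy of $v$ at the turning points, together with the fixed cutoff $E_0$, propagates through differences in $E$, and in particular verifying that --- although multiplication by the prefactor $1/(4\pi v|\varphi'|)$ is not bounded on H\"older spaces --- its cancellation against the $v$-factor produced by $\tilde{\mathcal B}$ is genuinely effective at the level of the $C_\beta$-norm, with the sharp ceiling $\beta<1/2$. This is precisely what the auxiliary results of Appendix~\ref{apex} on the energy--angle variables are designed to supply.
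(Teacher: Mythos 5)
Your proposal follows essentially the same route as the paper's proof: the core of part (a) is, as in the paper, the explicit representation of $G(z)$ as the Abel-type integral $g(x,E,z)=2\int_{U_0(x)}^{E_0}\sum_l \sin(4\pi l\theta(x,\lambda))\bigl(\tfrac{(4\pi l)^2}{T(\lambda)^2}-z\bigr)^{-1}f_l(\lambda)\,d\lambda$ with $x=x(\theta,E)$, a supremum bound from the resolvent factors, a H\"older-in-$E$ bound with exponent strictly below $1/2$, and the embedding $C_\beta\subset\hat C_{\alpha_2}$ for $\beta>\alpha_2$ from the Appendix; your treatments of (b) (resolvent identity rather than differentiation under the integral) and (c) (splitting the $l$-sum rather than the interpolation bound $|r|\le |r|^{1-\varepsilon}|\mathrm{Im}\,z|^{-\varepsilon}$) are harmless variants. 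One small correction of diagnosis: the ceiling $\alpha_2<1/2$ is forced not by the fixed upper limit $E_0$ but by the square-root degeneracy of the map $E\mapsto x(\theta,E)$ at the bottom of the well, i.e.\ the estimate $|x(\theta,E_1)-x(\theta,E_2)|\le C\sqrt{|E_1-E_2|}$ of Proposition~\ref{derx}, which is exactly the Appendix input you invoke. The only genuine divergence is in (d): the paper truncates the Fourier series in $l$, shows the tail is small in operator norm, and applies Arzel\`a--Ascoli to the finite vector of coefficients $(f_1^{(q)},\dots,f_N^{(q)})$, whereas you extract compactness from the gain of angular H\"older regularity of the output together with the compact embedding of $C_\gamma(S_1)$ into $L^2_{\rm o}(S_1)$ and a vector-valued Arzel\`a--Ascoli argument in $E$; both mechanisms are valid (note you only need uniform boundedness in $C_0(I_0,\mathcal G_1)$ plus equicontinuity in $C_\beta(I_0,L^2_{\rm o}(S_1))$, not the full mixed second-difference bound implicit in ``bounded in $C_\beta(I_0,\mathcal G_1)$''), and the closedness of $\hat C_{\alpha_2}$ in $C_{\alpha_2}$ ensures the limits land in the right space.
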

\begin{proof} By \eqref{2.21}, \eqref{2.75}, \eqref{2.76}, and \eqref{s.1} for $ f \in {C}_{\alpha_1}(I_0,L^2_{\rm o}(S_{1}) ),$
\beq\label{s.2}
(G(z) f)(\theta,E)=   g(x,E,z), \qquad x=x(\theta, E),
 \ene
 where
 \beq\label{s.3}
 g(x,E,z):=2 \int_{U_0(x)}^{E_0}  \sum_{l =1}^\infty \,d\lambda \,\sin(4\pi l \theta(x,\lambda)) \left( \frac{(4\pi l)^2}{T(\lambda)^2} -z\right)^{-1}  f_l(\lambda),
 \ene
 with
 \beq\label{s.4}
  f_l(E):= \int_0^1\sin(4\pi l \theta_1) f(E,\theta_1) d\theta_1.
  \ene
  It follows that,
\beq\label{s.5}
\left| (G(z) f)(\theta, E) \right| \leq C \|f\|_{{C}_{\alpha_1}(I_0, L^2_{\rm o}(S_{1}))}, \qquad \theta \in S_1, E\in I_0,
\ene
where we used that as $ z \in \rho(\mathcal A_0),$ we have   $|\frac{(4\pi l)^2}{T(\lambda)^2} -z| \geq \delta >0,$ for $l \in \mathcal N,$ and $ \lambda \in I_0.$   Suppose that $ U_0(x_1) \leq U_0(x_2).$ We denote,
\beq\label{s.5b}\begin{array}{c}
 D(x_1, x_2)=2 \int_{U_0(x_2)}^{E_ 0}  \sum_{l =1}^\infty \,d\lambda \,[\sin(4\pi l \theta(x_2,\lambda))-[\sin(4\pi l \theta(x_1,\lambda))] \\\left( \frac{(4\pi l)^2}{T(\lambda)^2} -z\right)^{-1}  f_l(\lambda).
\end{array}
\ene
For any  $ 0 \leq  \delta \leq 1$ there is a constant $C$ such that,
\beq\label{s.5fxx}
\left|\sin(4\pi l \theta(x_2,\lambda))-\sin(4\pi l \theta(x_1,\lambda))  \right| \leq C  l^\delta   \left|\theta(x_2,\lambda)- \theta(x_1,\lambda)\right|^\delta.
\ene
Hence, by \eqref{2.4}, \eqref{2.36}, and \eqref{s.5fxx}, for $ 0 < \delta <1,$ we have,
\beq\label{s.5f}
|D(x_1,x_2)| \leq C  \int_{U_0(x_2)}^{E_ 0}    d\lambda \left|\int_{x_1}^{x_2} 
\frac{1}{\sqrt{\lambda- U_0(y)}} \,dy\right|^\delta \|f\|_{{C}_{\alpha_1}(I_0, L^2_{\rm o}(S_{1}))}.  
\ene
Further,
\beq\label{s.5fx}\begin{array}{c}
|D(x_1,x_2)| \leq C  \int_{U_0(x_2)}^{E_ 0}    d\lambda \frac{1}{(\lambda- U_0(x_2))^{\delta/2}} |x_1-x_2|^\delta
\|f\|_{{C}_{\alpha_1}(I_0, L^2_{\rm o}(S_{1}))}  \leq   \\[10pt] C   |x_1-x_2|^\delta
\|f\|_{{C}_{\alpha_1}(I_0,  L^2_{\rm o}(S_{1}))}.
\end{array}
\ene
 Moreover,  for $ x \geq 0,$ if $ E= U_0(x),$ we have that $ x=x_+(E).$  Then,  it follows  from  \eqref{1.20}
 \beq\label{s.8}\begin{array}{c}
 \left | \int_{U_0(x_1)}^{U_0(x_2)} 2 \sum_{l =1}^\infty \,d\lambda \,\sin(4\pi l \theta(x_1,\lambda)) 
 \left( \frac{(4\pi l)^2}{T(\lambda)^2} -z\right)^{-1}  f_l(\lambda)\right | \\
  \leq |x_1-x_2|  
  \|f\|_{{C}_{\alpha_1}(I_0, L^2_{\rm o}(S_{1}))},
  \end{array}
 \ene
 where we changed the variable of integration to $x_\pm(\lambda).$  Note that as  $ U_0$ is even we can always assume that $x_1\geq 0$ and $x_2 \geq 0$ in the left-hand side of \eqref{s.8}.
 Then, by \eqref{s.2}, \eqref{s.3},  \eqref{s.5b}, \eqref{s.5fx}, \eqref{s.8}, and \eqref{der.1}, for any $ 0 <\delta < 1,$
 \beq\label{s.9} 
 \left|   G(\theta, E_1)-G(\theta, E_2)\right | \leq C |E_1-E_2|^{\delta/2} \|f\|_{{C}_{\alpha_1}(I_0, L^2_{\rm o}(S_{1}))}.
 \ene
 By \eqref{s.5} and \eqref{s.9} $G(z)$ is bounded from ${C}_{\alpha_1}(I_0,L^2_{\rm o}(S_{1}))$ into
 ${C}_{\delta/2}(I_0,L^2_{\rm o}(S_{1})).$ Let us take $ \delta/2 > \alpha_2.$ Since by  Proposition 
 ~\eqref{propa.1}   ${C}_{\delta/2}(I_0,L^2_{\rm o}(S_{1})) \subset \hat{C}_{\alpha_2}(0,L^2_{\rm o}(S_{1})),$ and by \eqref{a.16b} the imbedding is continuous, we  have that  $G(z)$ is bounded from   ${C}_{\alpha_1}(I_0, L^2_{\rm o}(S_{1})),$ into  $\hat{C}
_{\alpha_2}(I_0, L^2_{\rm o}(S_{1})).$ This proves (a). Item (b) is proved as in the proof of (a) differentiating \eqref{s.3} under the integral sign  with respect  to $z.$
 We prove (c) arguing as in the proof of (a) and using that for any $ 1 > \varepsilon >0$
 $$
 \left| \left( \frac{(4\pi l)^2}{T(\lambda)^2} -z\right)^{-1}  \right| \leq  \left| \left( \frac{(4\pi l)^2}{T(\lambda)^2} -z\right)^{-1}  \right|^{1-\varepsilon} \, \frac{1}{|{\rm im}\, z|^\varepsilon}.
 $$
 Let us now  prove (d).We define the operator $G_N(z)$ as follows.
 
\beq\label{s.10}
(G_N(z) f)(\theta, E)=  h_N(x,E,z), \qquad x=x(\theta, E),
 \ene
 where
 \beq\label{s.11}
 h_N(x,E,z):=2\int_{U_0(x)}^{E_0}  \sum_{l =N+1}^\infty \,d\lambda \,\sin(4\pi l \theta(x,\lambda)) \left( \frac{(4\pi l)^2}{T(\lambda)^2} -z\right)^{-1}  f_l(\lambda),
 \ene
 with $f_l(E)$ as in \eqref{s.4}. Arguing as in the proof that $G(z)$ is bounded from  $C_{\alpha_1}(I_0, L^2_{\rm o}(S_{1}))$ into   $\hat{C}_{\alpha_2}(I_0, L^2_{\rm o}(S_{1})),$ we prove that
$G_N(z)$ is bounded from  $C_{\alpha_1}(I_0, L^2_{\rm o}(S_{1}))$ into   $\hat{C}_{\alpha_2}(I_0, L^2_{\rm o}(S_{1})),$ and that,
\beq\label{s.12}
\ds \lim_{N\to \infty} \| G_N(z) \|_{\ds B( {C}_{\alpha_1}(I_0, L^2_{\rm o}(S_{1})),\hat{C}_{\alpha_2}(I_0, L^2_{\rm o}(S_{1})))}=0.
\ene
Then, it is enough to prove that the operator $K_N(z)$ is compact from  $C_{\alpha_1}(I_0, L^2_{\rm o}(S_{1}))$ into   $\hat{C}_{\alpha_2}(I_0, L^2_{\rm o}(S_{1})),$ where,
\beq\label{s.13}
\left(K_N(z) f\right)(\theta, E):= k_N(x,E,z), \qquad x=x(\theta, E),
 \ene
 with
 \beq\label{s.14}
 k_N(x,E,z):=2 \int_{U_0(x)}^{E_0}  \sum_{l =1}^N \,d\lambda \,\sin(4\pi l \theta(x,\lambda)) \left( \frac{(4\pi l)^2}{T(\lambda)^2} -z\right)^{-1}  f_l(\lambda),
 \ene
 with $f_l(E)$ as in \eqref{s.4}. For this purpose, let $f^{(q)}, q=1, \dots,$ be a bounded sequence in ${C}_{\alpha_1}(I_0, L^2_{\rm o}(S_{1})).$ Then, the vector valued functions $(f_1^{(q)}(E),\dots, f_N^{(q)}(E)  ), q=1, \dots$ are uniformly bounded and 
 uniformly equicontinuous functions of $ E \in I_0 $ into $\mathbb C^N$  and by the Arzel\`a--Ascoli theorem,
 Theorem 7.25 of \cite{ru}, it has a subsequence, that we denote also by   $(f_1^{(q)}(E),\dots, f_N^{(q)}(E)  ), q=1, \dots$ that converges uniformly  to a continuous vector valued function function $(g_1(E),\dots, g_N(E)).$ As in the proof that $G(z)$ 
  is bounded from ${C}_{\alpha_1}(I_0, L^2_{\rm o}(S_{1}))$ into   $\hat{C}_{\alpha_2}(I_0, L^2_{\rm o}(S_{1})),$ we prove that
 \beq\label{s.15}
 \begin{array}{c}
\lim_{q\to \infty}\left\| K_N f^{(q)}-  2 \int_{U_0(x)}^{E_0}  \sum_{l =1}^N \,d\lambda\ds \,\sin(4\pi l \theta(x,\lambda)) \left( \frac{(4\pi l)^2}{T(\lambda)^2} -z\right)^{-1} \right. \\ \left.g_l(\lambda)  \right\|_{\hat{C}_{\alpha_2}(I_0, L^2_{\rm o}(S_{1}))} =0.
\end{array}
\ene
This proves that   $K_N f^{(q)}$  is convergent in $\hat{C}_{\alpha_2}(I_0, L^2_{\rm o}(S_{1})).$ This completes the proof of the compactness of $G(z).$

\end{proof}
We denote
\beq\label{s.16}
\Gamma:= \sigma(\mathcal A_0)\setminus  \cup_{l \in \mathbb N }
\left\{  \frac{(4\pi l)^2}{T(E_0)^2 }
 , \frac{(4\pi l)^2}{T(E_{\rm min})^2 }
\right\}.
\ene

In the next theorem we prove that $G(z)$ extends continuously to $\Gamma.$

\begin{theorem} \label{theo.s2}
For any $ 0 < \alpha_1, \alpha_2 < 1/4$  the following holds.
\begin{enumerate}
\item[{\rm (a)}]
 For any $\gamma \in \Gamma$ the  limits
\beq\label{s.17}
G(\gamma \pm i0):= \lim_{\varepsilon \downarrow 0} G(\gamma\pm i\varepsilon)
\ene
exist in the uniform operator topology   in  $B({C}_{\alpha_1}(I_0, L^2_{\rm o}(S_{1})), \hat{C}_{\alpha_2}(I_0, L^2_{\rm o}(S_{1}))).$  
Moreover, the convergence is uniform for $\gamma$  in  any set $\Gamma_K$  that is a finite union of bounded and closed intervals contained in $\Gamma.$
\item[{\rm (b)}]
The functions 
\beq\label{s.18}
 G_\pm(\gamma):=\left\{\begin{array}{c} G(\gamma), \qquad \gamma \in \rho( \mathcal A_0),\\
 G(\gamma \pm i0), \qquad \gamma \in \Gamma,
 \end{array}\right.
 \ene
defined for  $ \gamma  \in \rho(\mathcal A_0) \cup \Gamma $ with values in  $B({C}_{\alpha_1}(I_0, L^2_{\rm o}(S_{1})), \hat{C}_{\alpha_2}(I_0, L^2_{\rm o}(S_{1})))$   are analytic for $ \gamma \in \rho(\mathcal A_0)$  and are H\"older continuous with exponent $\alpha,$ for any 
 $0 < \alpha <  \alpha_1(1-4\alpha_2),$   in compact sets of
 \beq\label{s.18ba}
 {\mathbb C_\pm }\cup \Gamma_K.
 \ene
 \item[{\rm (c)}]
 The operators  $G_\pm(\gamma)$ are compact from  ${C}_{\alpha_1}(I_0, L^2_{\rm o}(S_{1}))$, into  $\hat{C}_{\alpha_2}(I_0, L^2_{\rm o}(S_{1}))$  
for $ \gamma \in \rho(\mathcal A_0) \cup \Gamma.$
 \end{enumerate}
\end{theorem}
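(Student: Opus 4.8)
The plan is to argue directly from the explicit representation \eqref{s.2}--\eqref{s.4} of $G(z)f$ obtained in the proof of Theorem~\ref{theo.s1}, in which the dependence on $z$ enters only through the scalar resolvent factors $\left(\frac{(4\pi l)^2}{T(\lambda)^2}-z\right)^{-1}$. Fix $\Gamma_K$, a finite union of bounded closed intervals contained in $\Gamma$ (recall \eqref{s.16}). Since $\beta_{l,{\rm min}}=(4\pi l)^2/T(E_0)^2\to\infty$ by \eqref{sp.2}, there is $N_0$ with ${\rm dist}(\Gamma_K,[\beta_{l,{\rm min}},\beta_{l,{\rm max}}])\geq 2\delta_0>0$ for all $l>N_0$; hence for $z$ in a complex neighbourhood of $\Gamma_K$ the terms with $l>N_0$ are holomorphic and uniformly bounded, and repeating the estimates of Theorem~\ref{theo.s1} — now allowed on the real axis because $|\frac{(4\pi l)^2}{T(\lambda)^2}-z|\geq\delta_0$ there — shows that the tail $\sum_{l>N_0}$ contributes an operator-valued map that is holomorphic, hence continuous and H\"older, on that neighbourhood, with compact values from $C_{\alpha_1}(I_0,L^2_{\rm o}(S_{1}))$ into $\hat{C}_{\alpha_2}(I_0,L^2_{\rm o}(S_{1}))$, and of norm tending to $0$ as $N_0\to\infty$ uniformly in $z$ (the analogue of \eqref{s.12}). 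It therefore suffices to treat each of the finitely many terms $l\leq N_0$.

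For $l\leq N_0$ with $\gamma\notin[\beta_{l,{\rm min}},\beta_{l,{\rm max}}]$ the $l$-th term is holomorphic near $\gamma$ and needs nothing. For the remaining, \emph{active}, $l$ — for which $\gamma$ lies in the open interval $(\beta_{l,{\rm min}},\beta_{l,{\rm max}})$, precisely because $\Gamma$ excludes the endpoints in \eqref{s.16} — I would change the integration variable in \eqref{s.3} from $\lambda$ to $\mu=\beta_l(\lambda)=(4\pi l)^2/T(\lambda)^2$. By Proposition~\ref{per}, $T$ is $C^1$ and strictly increasing with $T'>0$, so $\beta_l$ is an orientation-reversing $C^1$ diffeomorphism of $[E_{\rm min},E_0]$ onto $[\beta_{l,{\rm min}},\beta_{l,{\rm max}}]$ with non-vanishing derivative, and each active term becomes a Cauchy-type integral $\int \Phi_l(x,\mu)\,(\mu-z)^{-1}\,d\mu$ over $[\beta_{l,{\rm min}},\beta_l(U_0(x))]$, an interval whose upper endpoint depends on $x=x(\theta,E)$. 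From $f\in C_{\alpha_1}(I_0,L^2_{\rm o}(S_{1}))$ the function $f_l$ is H\"older-$\alpha_1$ in $E$; combining this with the auxiliary H\"older estimates on $\theta(x,\lambda)$ and on the energy--angle change of variables collected in Appendix~\ref{apex}, and with the fact that the only non-smoothness of $\Phi_l(x,\cdot)$ is a $\sqrt{\,\cdot\,}$-vanishing at the moving endpoint $\mu=\beta_l(U_0(x))$ (where $\sin(4\pi l\,\theta(x,\cdot))\to0$, the rate being $1/2>1/4>\alpha_1$), one verifies that $\Phi_l(x,\cdot)$ is H\"older with exponent $\alpha_1$, uniformly in $x$. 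The Plemelj--Sokhotski formulae together with Privalov's theorem for Cauchy-type integrals then yield the boundary values $G(\gamma\pm i0)$, with the jump $\mp\pi i\,\Phi_l(x,\gamma)$ plus a principal-value singular integral; since $\Gamma_K$ stays at a positive distance from all endpoints $\beta_{l,{\rm min}},\beta_{l,{\rm max}}$ with $l\leq N_0$, the convergence in \eqref{s.17} and all constants are uniform for $\gamma\in\Gamma_K$. Tracking the exponents in the output variable $E$ exactly as in \eqref{s.5b}--\eqref{s.9}, but with the additional resolvent singularity now present inside the $\lambda$-integral, is what forces the admissible output H\"older exponent below $1/4$, and is the reason for the hypothesis $\alpha_1,\alpha_2<1/4$. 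This proves (a).

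Given (a), part (c) is immediate: for $\gamma\in\Gamma$ the operator $G_\pm(\gamma)$ is the limit, in the norm of $B(C_{\alpha_1}(I_0,L^2_{\rm o}(S_{1})),\hat{C}_{\alpha_2}(I_0,L^2_{\rm o}(S_{1})))$, of the operators $G(\gamma\pm i\varepsilon)$ as $\varepsilon\downarrow0$, each compact by Theorem~\ref{theo.s1}(d), and a norm limit of compact operators is compact; for $\gamma\in\rho(\mathcal A_0)$ compactness is Theorem~\ref{theo.s1}(d) itself. For part (b), analyticity on $\rho(\mathcal A_0)$ is Theorem~\ref{theo.s1}(b); to upgrade the mere continuity up to $\Gamma_K$ from (a) into H\"older continuity of exponent any $\alpha<\alpha_1(1-4\alpha_2)$ on compacts of $\mathbb C_\pm\cup\Gamma_K$, I would combine the uniform bound on $G(z)$ near $\Gamma_K$ (from (a)) with the Cauchy-type derivative bound $\|G'(z)\|\leq C\,|{\rm Im}\,z|^{-1+\eta}$ valid, for $\eta<\alpha_1(1-4\alpha_2)$, in the slightly larger range of H\"older exponents — that is, trading the room between $C_{\alpha_1}$ and $\hat{C}_{\alpha_2}$ for a gain $|{\rm Im}\,z|^{\eta}$ — and then invoke the standard abstract lemma (of Privalov type; see Appendix~\ref{apex}) that a Banach-space-valued function analytic in a one-sided complex neighbourhood of a real interval and satisfying such a derivative bound is H\"older-$\eta$ up to that interval. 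The main obstacle, where all the care is concentrated, is part (a): the Cauchy kernel $(\mu-\gamma)^{-1}$ must be handled simultaneously with the square-root degeneracy of the energy--angle variables at the turning points — precisely when $\gamma$ approaches the moving endpoint $\beta_l(U_0(x))$ the two singularities coalesce — all while keeping the estimates uniform in $\gamma\in\Gamma_K$ and in the acting function $f$; it is this coalescence that both pins the admissible exponents below $1/4$ and makes the computation delicate.
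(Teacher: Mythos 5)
Your proposal is correct and follows essentially the same route as the paper: isolate the finitely many ``active'' indices $l$ for which $\Gamma_K$ meets $(\beta_{l,{\rm min}},\beta_{l,{\rm max}})$, treat the remaining sum by the estimates of Theorem~\ref{theo.s1} since the resolvent factors stay bounded there, change variables $\lambda\mapsto\beta_l(\lambda)$ in each active term to obtain a Cauchy-type integral whose density is H\"older (with the exponent limited to below $1/4$ by the composition of the two square-root estimates at the turning points), and conclude by Plemelj--Sokhotski/Privalov, with compactness in (c) obtained as a norm limit of the compact operators of Theorem~\ref{theo.s1}(d). The only deviation is in (b), where you derive the H\"older continuity up to $\Gamma_K$ from a derivative bound $\|G'(z)\|\leq C|{\rm Im}\,z|^{-1+\eta}$ rather than from the direct Privalov estimates with the $\alpha_3$-interpolation used in the paper; this is a standard equivalent device and yields the same exponent $\alpha_1(1-4\alpha_2)$.
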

\begin{proof} Let $\Gamma_K$ be any 
finite union of bounded and closed intervals contained in $\Gamma.$  We define,
\beq\label{s.19}
J:= \left\{ l \in \mathbb N :  \Gamma_K \cap \left(    \frac{(4\pi l)^2}{T(E_0)^2 }
 , \frac{(4\pi l)^2}{T(E_{\rm min})^2}\right) \neq\emptyset\right\}, \qquad \tilde{J}:= \mathbb N \setminus J.
\ene   
Note that $J$ is a finite set.

We decompose $G(z)$ as follows,
\beq\label{s.20}
G(z)= G_{\tilde{J}}(z)+ \sum_{l\in J}  G_l(z),
\ene
where for $f \in {{C}_{\alpha_1}(I_0, L^2_{\rm o}(S_{1}))} ,$ and with $f_l(E)$ as in \eqref{s.4},
\beq\label{s.20b}
(G_{\tilde{J}}(z)f)(\theta, E):=   g_{\tilde{J}}(x,z), \qquad x=x(\theta, E),
 \ene
 with
 \beq\label{s.21}
 g_{\tilde{J}}(x,z):= 2 \int_{U_0(x)}^{E_0}  \sum_{l \in \tilde{J}}\,d\lambda \,\sin(4\pi l \theta(x,\lambda)) \left( \frac{(4\pi l)^2}{T(\lambda)^2} -z\right)^{-1}  f_l(\lambda),
 \ene
and
\beq\label{s.22}
(G_{l}(z)f)(\theta, E):=   g_{l}(x,z), \qquad x=x(\theta, E),\qquad l \in J,
 \ene
 where
 \beq\label{s.23}
 g_{l}(x,z):= 2\int_{U_0(x)}^{E_0} d\lambda \,\sin(4\pi l \theta(x,\lambda)) \left( \frac{(4\pi l)^2}{T(\lambda)^2} -z\right)^{-1}  f_l(\lambda), l \in J.
 \ene
Note  that  the distance
$$
d:= {\rm dist}\left( \Gamma_K,  \ds \cup_{l \in \tilde{J} }
\left[  \frac{(4\pi l)^2}{T(E_0)^2 }
 , \frac{(4\pi l)^2}{T(E_{\rm min})^2 }
\right ]\right)
$$
is strictly positive. Hence  as in the proof of Theorem~\ref{theo.s1} we prove that the limits
\beq\label{s.24}
 \lim_{\varepsilon \downarrow 0}G_{\tilde{J}}(\gamma+i\varepsilon)=  \lim_{\varepsilon \downarrow 0}G_{\tilde{J}}(\gamma-i\varepsilon):= G_{\tilde{J}}(\gamma) , \gamma \in \Gamma_K,
\ene
exist in the uniform operator topology  in  $B({C}_{\alpha_1}(I_0, L^2_{\rm o}(S_{1})), \hat{C}_{\alpha_2}(I_0, L^2_{\rm o}(S_{1})))$  
 and that the converge is uniform for $\gamma \in \Gamma_K.$ 
 Moreover, $G_{\tilde{J}}(\gamma)$ is analytic for $\gamma \in  \rho(\mathcal A_0)\cup \Gamma_k.$ This proves (a)  and (b) for $G_{ \tilde J}(z).$ Then, it is enough to prove  (a) and (b) for each  one of the $G_l(z), l \in J.$ Recall that  when $ E=U_0(x)$ we have $ x=x_+(E),$ for $x \geq 0,$ and $x=x_-(E),$ for $ x \leq 0.$ Further, by \eqref{2.36} $\theta(x_+(E), E)= 1/2,$ and  $\theta(x_-(E), E)= 0.$ Then
denoting
\beq\label{s.25}
a_l(x,E)= \sin(4\pi l \theta(x,E)),
\ene
we have,
\beq\label{s.26}
a_l(x, U_0(x))=0.
\ene
Note that for each $ x \in [x_-(E_0), x_+(E_0)],$ the angle $\theta(x,E)$ is defined for $ E \in [ U_0(x), E_0].$ Then, 
by \eqref{s.26} we can extend $a_l(x, E)$ continuously to $ E \in [E_{\rm min}, U_0(x)]$ by zero, i.e.
 \beq\label{s.26b}
 a_l(x,E)=0,\qquad  E \in [E_{\rm min}, U_0(x)].
 \ene
  Hence, we can write \eqref{s.23} as follows,
\beq\label{s.27}
 g_{l}(x,z):= \int_{E_{\rm min}}^{E_0} d\lambda \,a_l(x,\lambda) \left( \frac{(4\pi l)^2}{T(\lambda)^2} -z\right)^{-1}  f_l(\lambda), \qquad  l \in J.
 \ene
We designate by 
 \beq\label{s.28}
 d_l:= {\rm dist}\left(\left[ \Gamma_K\cap  \left(  \frac{(4\pi l)^2}{T(E_0)^2} 
 , \frac{(4\pi l)^2}{T(E_{\rm min})^2 }\right)\right],   \left\{  \frac{(4\pi l)^2}{T(E_0)^2 }
 , \frac{(4\pi l)^2}{T(E_{\rm min})^2 } \right \}  \right).
 \ene
 Clearly, $d_l >0.$ Let $ \varphi \in C^1(\mathbb R)$ satisfy  
 \beq\label{s.29}
 \varphi(y)= \left\{ \ds \begin{array}{l}
 0, y \leq   \frac{(4\pi l)^2}{T(E_0)^2} + \frac{d_l}{4},\\[5pt]
  1,  \frac{(4\pi l)^2}{T(E_0)^2} + \frac{d_l}{2} \leq y \leq \frac{(4\pi l)^2}{T(E_{\rm min})^2} - \frac{d_l}{2}, \\[5pt]
 0,  y \geq \frac{(4\pi l)^2}{T(E_{\rm min})^2} - \frac{d_l}{4}.
 \end{array}\right.
 \ene
  We decompose $G_l(z)$ as follows
 \beq\label{s.30}
 G_l(z)= G_{l,1}(z)+G_{l,2}(z), 
 \ene
 with
\beq\label{s.30b}
(G_{l,1}(z)f)(\theta,E)= g_{l,1}(x,z), \qquad x= x(\theta,E),
\ene
with
 \beq\label{s.32}\begin{array}{r}
 g_{l,1}(x,z):=2 \ds\int_{E_{\rm min}}^{E_0}\,a_l(x,\lambda) \ds\left( \frac{(4\pi l)^2}{T(\lambda)^2} -z\right)^{-1} \\[10pt]  \varphi\left( \frac{(4\pi l)^2}{T(\lambda)^2} \right) f_l(\lambda) \,  d\lambda , \qquad    l \in J,
 \end{array}
 \ene
 and
\beq\label{s.34b}
(G_{l,2}(z)f)(\theta,E)= g_{l,2}(x,z), \qquad x= x(\theta,E),
\ene
where
 \beq\label{s.34}\begin{array}{r}
 g_{l,2}(x,z):=2 \ds\int_{E_{\rm min}}^{E_0} \,a_l(x,\lambda) \ds\left( \frac{(4\pi l)^2}{T(\lambda)^2} -z\right)^{-1} \\[14pt]  \left[1-  \varphi\left( \frac{(4\pi l)^2}{T(\lambda)^2} \right)\right] f_l(\lambda) \,d\lambda  , \qquad l \in J.
 \end{array}
 \ene
 Note that in the support of the integrand in the right-hand side of \eqref{s.34} $ |\frac{(4\pi l)^2}{T(\lambda)^2} -\gamma|\geq \delta_l >0,$ for $\gamma \in \Gamma_K.$ Hence, as in the proof of Theorem~\ref{theo.s1} we prove that the  limits 
 \beq\label{s.35}
 \lim_{\varepsilon \downarrow 0}G_{l,2}(\gamma+i\varepsilon)=  \lim_{\varepsilon \downarrow 0}G_{l,2}(\gamma-i\varepsilon):= G_{l,2}(\gamma),\qquad  \gamma \in \Gamma_K, l \in J,
 \ene 
 exist in the uniform operator topology in $B({C}_{\alpha_1}(I_0, L^2_{\rm o}(S_{1})), \hat{C}_{\alpha_2}(I_0, L^2_{\rm o}(S_{1})))$  
 and that the converge is uniform for $\gamma \in \Gamma_K.$
  Moreover,   $ G_{l,2}(\gamma)$ analytic for $\gamma$  in  $ \rho(\mathcal A_0)\cup\Gamma_K.$  This proves (a) and (b) for $ G_{l,2}(\gamma).$  Finally, it only remains to study the term $G_{l,1}(z)$ that is the one where the singularity lies.
 Since  the period $T(E)$ is strictly increasing, in the support of the integrand in the right-hand side of \eqref{s.32} $\lambda \geq E_{\rm min}+ \rho_l,$ for some $\rho_l >0,$ such that $ E_{\rm min}+\rho_l < E_0.$ Then, we can write \eqref{s.32} as
  \beq\label{s.37}\begin{array}{c}
 g_{l,1}(x,z)= \ds\int_{E_{\rm min}+ \rho_l}^{E_0} \,a_l(x,\lambda) \left( \frac{(4\pi l)^2}{T(\lambda)^2} -z\right)^{-1} \\[11pt]  \varphi\left( \frac{(4\pi l)^2}{T(\lambda)^2}\right) f_l(\lambda) \,d\lambda  , \qquad l \in J.
 \end{array}
 \ene
 Recalling \eqref{sp.1}, \eqref{sp.2}, \eqref{sp.2b}, \eqref{s.42bc}, and \eqref{s.43}  and changing the variable of integration from $E$ to $\beta$ in \eqref{s.37} we get,
    \beq\label{s.340}\begin{array}{l}
 g_{l,1}(x,z)= \ds\int_{\beta_{\rm min}}^{\beta_{\rm max}-\tilde{\rho}_l}  \, p_l(\beta) \,a_l(x,\ \lambda_l(\beta) ) \left( \beta -z\right)^{-1} \\[10pt]  \varphi\left( \beta \right)  f_l(\lambda_l(\beta))\, d\beta, \qquad l \in J,
 \end{array}
 \ene
 for some $\tilde{\rho}_l>0.$
  By \eqref{2.42abc} and
 as we defined $a_l(x,E)=0,$ for  $ E \in [E_{\rm min},E_0],$ there is a constant $C$ such that,
 \beq\label{s.36b}
 \left| a_l(x,E_2)- a_l(x,E_1)  \right| \leq C   \sqrt{|E_1- E_2|},  E_1,E_2 \in [E_{\rm min}, E_0], x \in [x_-(E_0), x_+(E_0)].
 \ene
  It follows from Proposition~\ref{last}  that  for $ 0<\alpha <1/2,$  there is a constant $C$ such that,
  \beq\label{s.38}
  |a_l(x_2,E)- a_l(x_1,E)| \leq  C |x_1-x_2|^{\alpha},  x_1,x_2 \in [x_-(E_0), x_+(E_0)], E \in [E_{\rm min}+ \delta, E_0].
  \ene
Note that since $T(E)$ is strictly increasing and   continuously differentiable  for $E \in (U_0(0), E_0],$ 
  by \eqref{2.4} and \eqref{s.43} $p_l(\beta)$ is bounded for $ \beta \in [\beta_{\rm min}, \beta_{\rm max}-\tilde{\rho}_l].$ Hence,
  \beq\label{s.44}
|\lambda_l(\beta_{1})- \lambda_l(\beta_{2})|\leq C |\beta_{1}-\beta_{2}|, \qquad \beta_{1}, \beta_{ 2} \in [ \beta_{l,\min}, \beta_{l,\max}-\tilde{\rho}_l].
\ene
 By item (b) in Proposition~\ref{propder} $ T(E)\in C^2([E_{\rm min}+\delta, E_0]) $ for $\delta >0$ with 
 $ E_{\rm min}+\delta < E_0.$ Hence, by \eqref{s.43} and \eqref{s.44},
\beq\label{s.46}
|p_l(\lambda(\beta_{1})- p_l(\lambda(\beta_{2})| \leq C |\beta_{1}-\beta_{2}|, \qquad \beta_{1}, \beta_{2} \in [ \beta_{\min}, \beta_{\max}-\tilde{\rho}_l].
\ene
With slight abuse of notation let us denote,
\beq\label{s.46b}
g_{l,1}(\theta,E,z):= g_{l,1}(x(\theta, E),z), \qquad l \in J.
\ene
We designate,
\beq\label{s.46c}
b_l(\theta, E, \beta):=  p_l(\beta) \,a_l(x(\theta,E),\ \lambda_l(\beta) )  \varphi\left( \beta \right), \qquad l \in J.
\ene
Clearly,
\beq\label{s.46d}
|b_l(\theta, E, \beta)| \leq C, \\ \qquad \theta \in S_{1}, E\in I_0, \beta 
\in [\beta_{\rm min}, \beta_{{\rm max}-\tilde{\rho}_l}].
\ene

Note that by \eqref{s.29},
\beq\label{s.46e}
b_l(\theta, E, \beta_{\rm min})= b_l(\theta, E,\beta_{{\rm max}-\tilde{\rho}_l})=0.
\ene
Moreover, by \eqref{s.36b}, \eqref{s.38}, \eqref{s.44}, \eqref{s.46}, and \eqref{der.1}, for any $0 < \alpha < 1/4, $
\beq\label{s.46f}\begin{array}{c}
\left |  b_l(\theta, E_1, \beta_1) -b_l(\theta, E_2, \beta_2) \right |\leq
 C \left ( |E_1-E_2|+ |\beta_1-\beta_2|\right )^{\alpha},\\[5pt] \qquad \theta \in S_{1}, E_1,E_2 \in I_0, \beta_1,\beta_2 
\in [\beta_{\rm min}, \beta_{{\rm max}-\tilde{\rho}_l}].
\end{array}
\ene
Further, by \eqref{s.340} and \eqref{s.46c}
    \beq\label{s.46g}\begin{array}{l}
 g_{l,1}(\theta, E ,z):= \ds\int_{\beta_{\rm min}}^{\beta_{\rm max}-\tilde{\rho}_l}  \,b_l(\theta, E, \beta) ) \left( \beta -z\right)^{-1} \,    f_l(\lambda_l(\beta))\,  d\beta, \qquad l \in J.
 \end{array}
 \ene
Then, by \eqref{s.30b}, \eqref{s.44},  \eqref{s.46d}- \eqref{s.46g}, and Theorem 5 and Remark 6 in  Chapter  IV of 
\cite{kur} (see also Theorem 6 in Section 2 of  Chapter 1 of \cite{ya}),   the following limits exist,
\beq\label{s.47} 
(G_{l,1}(\gamma\pm i0)f)(\theta, E):= \lim_{\varepsilon \downarrow 0} G_{l,1}(\gamma\pm i\varepsilon)(\theta, E),\qquad \gamma \in \Gamma_K, \theta \in S_{1},
\ene
for $ f \in C_{\alpha_1}(I_0, L^2_{\rm o}(S_{1})),$ with $ 0 < \alpha_1 < 1/4,$
 and they are given by
 \beq\label{s.48}\begin{array}{r}
  (G_{l,1}(\gamma\pm i0)f)(\theta, E):= {\rm P.V.}\int_{\beta_{\rm min}}^{\beta_{\rm max}-\tilde{\rho}_l }\,    d\beta \, b_l(\theta, E, \beta)\left( \beta -\gamma\right)^{-1}  f_l(\lambda_l(\beta))\, \pm  \\[5pt] \pi  i \, b_l(\theta, E, \gamma)   f_l(\lambda_l(\gamma)) , \qquad  \gamma \in \Gamma_K, l \in J,
 \end{array}
 \ene
 where ${\rm P.V.}$ denotes the principal value of the integral. Further,  the converge is uniform for $\gamma \in \Gamma_K.$ 
 We denote,
 \beq\label{s.48b}
 G_{l,1,\pm}(\gamma):=\left\{\begin{array}{c} G_{l,1}(\gamma), \qquad \gamma \in \rho( \mathcal A_0),\\
 G_{l,1}(\gamma \pm i0), \qquad \gamma \in \Gamma.
 \end{array}\right.
 \ene
 Using again  Theorem 5 and Remark 6 in  Chapter  IV of 
\cite{kur} (see also Theorem 6 in Section  2 of Chapter 1 of \cite{ya} and also Section 19 of Chapter 2 of \cite{mu})  we obtain
that for every compact set $  \mathcal K_\pm \in \mathbb C_\pm \cup \Gamma,$ 
 \beq\label{s.48z}
 \left| (G_{l,1,\pm}(\gamma)f)(\theta,E) \right| \leq C\ds \|f\|_{\ds C_{\alpha_1}(I_0, L^2_{\rm o}(S_{1}))}, \qquad \gamma \in \mathcal K_\pm ,
 \ene
and
\beq\label{s.48zz}\begin{array}{c}
 \left| (G_{l,1,\pm}(\gamma_1)f)(\theta,E)- (G_{l,1,\pm}(\gamma_2)f)(\theta,E) \right| \leq \ds C \ds |\gamma_1-\ds \gamma_2|^{\alpha_1}   \|f\|_{\ds  C_{\alpha_1}(I_0,  L^2_{\rm o}(S_{1}))},\\[8pt] \gamma_1, \gamma_2  \in \mathcal K_\pm.
 \end{array}
 \ene
For the clarity in the estimates below we designate,
\beq\label{s.48c}
h_{l,E}(\theta, \beta):= b_l(\theta, E, \beta), \qquad  \theta \in S_{1}, E  \in I_0, \beta
\in [\beta_{\rm min}, \beta_{{\rm max}-\tilde{\rho}_l}].
\ene
 Then, by \eqref{s.30b} and \eqref{s.46g}
 \beq\label{s.48d}
  (G_{l,1,\pm}(\gamma)f)(\theta, E)=  \ds\int_{\beta_{\rm min}}^{\beta_{\rm max}-\tilde{\rho}_l}  \,h_{l,E}(\theta, \beta) ) \left( \beta -z\right)^{-1} \,  f_l(\lambda_l(\beta))\, d\beta, \qquad l \in J,  \gamma \in \mathbb C_\pm.
  \ene
  Let us take $ 0< \alpha_1, \alpha_2 < 1/4,$  and  $ \alpha_3 < 1/4$ that satisfy $ \alpha_1 < \alpha_3$ and
 $ \alpha_2 < \alpha_3.$   Take $0 < \delta <1$ such that $ \alpha_2= \delta \alpha_3.$ By \eqref{s.46f} with $ \alpha= \alpha_3,$
 $$
  \begin{array}{l}
 \left| b_l(\theta, E_2,\beta_2)-  b_l(\theta, E_1,\beta_2) - ( b_l(\theta, E_2,\beta_1)-  b_l(\theta, E_1,\beta_1) )\right|   
 \leq\\[8pt] C |E_2-E_1|^{\delta \alpha_3}\,  |\beta_2-\beta_1|^{(1-\delta)\alpha_3}, \qquad \theta \in S_{1}, E_1,E_2 \in I_0, \beta_1,\beta_2 
\in [\beta_{\rm min}, \beta_{{\rm max}-\tilde{\rho}_l}],
\end{array}
$$
 and then,
  \beq\label{s.48e}\begin{array}{l}
 \left| b_l(\theta, E_2,\beta_2)-  b_l(\theta, E_1,\beta_2) - ( b_l(\theta, E_2,\beta_1)-  b_l(\theta, E_1,\beta_1) )\right|   
 \leq\\[8pt] C |E_2-E_1|^{\alpha_2}\,  |\beta_2-\beta_1|^{\alpha_3(1- \alpha_2/\alpha_3)}, \theta \in S_{1}, E_1,E_2 \in I_0, \beta_1,\beta_2 
\in [\beta_{\rm min}, \beta_{{\rm max}-\tilde{\rho}_l}].
\end{array}
\ene
Then, by \eqref{s.46d}, \eqref{s.46f}, and \eqref{s.48e}, for $ f \in {C}_{\alpha_1}(I_0, L^2_{\rm o}(S_{1}))$ we have,
\beq\label{s.48f}
\| (h_{l,E_2}-h_{l,E_1}) f_l\|_{\ds {C}_{\alpha_1(1- \alpha_2/\alpha_3)}(I_0, L^2_{\rm o}(S_{1}))}\leq C |E_2-E_1|^{\alpha_2} \|f\|_{\ds {C}_{\alpha_1}(I_0, L^2_{\rm o}(S_{1}))},  E_1,E_2 \in I_0.
\ene
Using \eqref{s.48d}, \eqref{s.48f}, and applying  Theorem 5 and Remark 6 in  Chapter  IV of 
\cite{kur} (see also Theorem 6  in Section 2 of Chapter 1 of \cite{ya}), and  Section 19 of Chapter 2 of \cite{mu} to the function
$(h_{E_2}(\theta,\beta)-h_{E_1}(\theta,\beta)) f_l(\lambda_l(\beta))$ for each fixed $E_1,E_2$ we get,
\beq\label{s.48g}\begin{array}{c}
\left|  (G_{l,1,\pm}(\gamma)f)(\theta, E_1)-(G_{l,1,\pm}(\gamma)f)(\theta, E_2) \right| \leq\ds C  |E_2-E_1|^{\alpha_2}
\|f\|_{ {C}_{\alpha_1}(I_0, L^2_{\rm o}(S_{1}))},\\[5pt] \theta \in S_{1},  E_1,E_2 \in I_0, \gamma \in \mathcal K_\pm,
\end{array}
 \ene
 and
 \beq\label{s48.exxz}\begin{array}{c}
   \left | (G_{l,1,\pm}(\gamma_1)f)(\theta, E_1)-(G_{l,1,\pm}(\gamma_1)f)(\theta, E_2)-((G_{l,1,\pm}(\gamma_2)f)(\theta, E_1)-(G_{l,1,\pm}(\gamma_2)f)(\theta, E_2))\right| \\[7pt] \leq \ds C| E_1-E_2|^{\alpha_2} |\gamma_1-\gamma_2|^{\alpha_1(1-\alpha_2/\alpha_3)}
\| f\|_{\ds {C}_{\alpha_1}(I_0, L^2_{\rm o}(S_{1}))},\\[5pt] \theta \in S_{1}, E_1,E_2 \in I_0, \gamma_1,\gamma_2 \in \mathcal K_\pm.  
 \end{array}
 \ene
 Equations \eqref{s.48z} and \eqref{s.48e} imply,
 \beq\label{s.48fa}
 \| (G_{l,1,\pm}(\gamma)f\|_{\ds {C}_{\alpha_2}(I_0, L^2_{\rm o}(S_{1}))} \leq C \ds \|f\|_{\ds {C}_{\alpha_1}(I_0, L^2_{\rm o}(S_{1}))}, \qquad\gamma \in \rho(\mathcal A_0)\cup \Gamma,
 \ene
 with the constant $C$ uniform in compact sets of $\mathcal K_\pm.$ From \eqref{s.48fa} it follows that $G_{l,1,\pm}(\gamma)
 \in B({C}_{\alpha_1}(I_0, L^2_{\rm o}(S_{1})), {C}_{\alpha_2}(I_0, L^2_{\rm o}(S_{1}))).$ Moreover, taking $ 1/4 >\tilde{\alpha}_2  > \alpha_2,$ we also have, $G_{l,1,\pm}(\gamma)
 \in B({C}_{\alpha_1}(I_0, L^2_{\rm o}(S_{1})), {C}_{\tilde{\alpha}_2}(I_0, L^2_{\rm o}(S_{1}))).$ By  Proposition  ~\eqref{propa.1}   ${C}_{\tilde{\alpha}_2}(I_0,L^2_{\rm o}(S_{1})) \subset \hat{C}_{\alpha_2}(0,L^2_{\rm o}(S_{1})),$ and by \eqref{a.16b} the imbedding is continuous. Then,  $G _{l,1,\pm}(\gamma)
 \in B({C}_{\alpha_1}(I_0, L^2_{\rm o}(S_{1})), \hat{C}_{\alpha_2}(I_0, L^2_{\rm o}(S_{1}))).$
  As we already  know that the range of $G_{l,1,\pm}(\gamma)$ is contained in  $\hat{C}_{\alpha_2}(0,L^2_{\rm o}(S_{1})),$ equations \eqref{s.48zz} and \eqref{s48.exxz} imply that  $ G_{l,1,\pm}(\gamma)$ is H\"older continuous for  $\gamma $  in compact sets of $\mathbb C_\pm\cup \Gamma,$ with values in  $ B({C}_{\alpha_1}(I_0, L^2_{\rm o}(S_{1})), \hat{C}_{\alpha_2}(I_0, L^2_{\rm o}(S_{1}))),$
 and with exponent  $\alpha_1(1-\alpha_2/\alpha_3).$ For $ \alpha_1$ and $\alpha_2$ fixed, the quantity $\alpha_1(1-\alpha_2/\alpha_3),$ is increasing in 
 $\alpha_3$ and the maximum at $ \alpha_3=1/4$ is equal to $\alpha_1(1- 4 \alpha_2).$ Then, given any $ \alpha <\alpha_1(1- 4 \alpha_2)$ there is an $ \alpha_3 < 1/4,$ with  $ \alpha_1 < \alpha_3,$ and    $ \alpha_2 < \alpha_3,$ such that  $ \alpha <  \alpha_1(1-\alpha_2/\alpha_3).$ This completes the proof of (a) and  (b) for  $G_{l,1}(z).$
 Finally, the fact that  the operators  $G_\pm(\gamma)$ are compact in   $ B({C}_{\alpha_1}(I_0, L^2_{\rm o}(S_{1})), \hat{C}_{\alpha_2}(I_0, L^2_{\rm o}(S_{1}))) $
   for $ \gamma \in \rho(\mathcal A_0) \cup \Gamma,$ follows from Theorem~\ref{theo.s1}, recalling that the limit in operator norm of compact operators is compact. 
 This completes the proof of the theorem.
 \end{proof}

By \eqref{s.1}
\beq\label{nueva}
\mathcal B R_0(z)=  4\pi  v(\theta, E)| \varphi'(E)|\, G(z),\qquad  z \in \rho(\mathcal A_0).
\ene
 In the next theorem we obtain our results on the operator $\mathcal B R_0(z).$
 \begin{theorem} \label{theo.s2b}
Let $\alpha_1$ satisfy, $0< \alpha_1< 1/4.$  For the polytrope \eqref{1.12}  with $ k >1$ let $\alpha_2$ satisfy  $0 < \alpha_2 < k-1$ and $ \alpha_2 < 1/4.$ Further, for the polytrope  \eqref{1.12}  with $ k =1$ and for the King steady state \eqref{1.12b} let $\alpha_2$ satisfy $0 < \alpha_2 < 1/4.$  Then, the following holds.
 \begin{enumerate}
\item[{\rm (a)}]
 For any $\gamma \in \Gamma$ the  limits
\beq\label{s.17b}
(\mathcal BR_0)(\gamma \pm i0):= \lim_{\varepsilon \downarrow 0}   4\pi  v(\theta, E)| \varphi'(E)| G(\gamma\pm i\varepsilon)=   4\pi  v(\theta, E)| \varphi'(E)| G_\pm(\gamma),
\ene
exist in the uniform operator topology   in  $B({C}_{\alpha_1}(I_0, L^2_{\rm o}(S_{1})), \hat{C}_{\alpha_2}(I_0, L^2_{\rm o}(S_{1}))).$  
Moreover, the convergence is uniform for $\gamma$  in  any set $\Gamma_K$  that is a finite union of bounded and closed intervals contained in $\Gamma.$
\item[{\rm (b)}]
The functions 
\beq\label{s.18bx}
 \mathcal (B R_0)  _\pm(\gamma):=\left\{\begin{array}{c}\mathcal B R_0(\gamma), \qquad \gamma \in \rho( \mathcal A_0),\\[5pt]
 \mathcal (B R_0)(\gamma \pm i0), \qquad \gamma \in \Gamma,
 \end{array}\right.
 \ene
defined for  $ \gamma  \in \rho(\mathcal A_0) \cup \Gamma $ with values in  $B({C}_{\alpha_1}(I_0, L^2_{\rm o}(S_{1})), \hat{C}_{\alpha_2}(I_0, L^2_{\rm o}(S_{1})))$   are analytic for $ \gamma \in \rho(\mathcal A_0)$  and are H\"older continuous with exponent $\alpha,$ for any 
 $0 < \alpha <  \alpha_1(1-4\alpha_2),$   in compact sets of
 $
 {\mathbb C_\pm }\cup \Gamma_K.
 $
 \item[{\rm (c)}]
 The operators  $\mathcal (B R_0)_\pm(\gamma)$ are compact from  ${C}_{\alpha_1}(I_0, L^2_{\rm o}(S_{1}))$, into  $\hat{C}_{\alpha_2}(I_0, L^2_{\rm o}(S_{1}))$  
for $ \gamma \in \rho(\mathcal A_0) \cup \Gamma.$
 \end{enumerate}
\end{theorem}
\begin{proof} Let us take  $\tilde{\alpha}_2 > \alpha_2$  that fulfills the conditions imposed to $\alpha_2.$ Then, by \eqref{der.1a} $ |\varphi' (E)| v \in B({C}_{\tilde{\alpha}_2}(I_0, L^2_{\rm o}(S_{1})).$  By  Proposition  ~\eqref{propa.1}   ${C}_{\tilde{\alpha_2}}(I_0,L^2_{\rm o}(S_{1})) \subset \hat{C}_{\alpha_2}(0,L^2_{\rm o}(S_{1})),$ and by \eqref{a.16b}  the imbedding is continuous. Then, $  |\varphi' (E)| v \in  B(C_{\tilde{\alpha}_2}(I_0, L^2_{\rm o}(S_{1})), \hat{C}_{\alpha_2}(I_0, L^2_{\rm o}(S_{1}))).$  
   Hence, the theorem  follows from \eqref{nueva}, and by  Theorem~\ref{theo.s2} where we replace $\alpha_2$ by $\tilde{\alpha}_2.$
\end{proof}
 In the following proposition we prove that $\left(\mathcal BR_0\right)_\pm(\gamma)$  have the eigenvalue $1$ for the same values of $\gamma,$ for $\gamma \in \Gamma.$

\begin{proposition}\label{prop2.3}
  For the polytrope \eqref{1.12}  with $ k >1$ let $\alpha $ satisfy  $0 < \alpha < k-1$ and $ \alpha < 1/4.$ Further, for the polytrope  \eqref{1.12}  with $ k =1$ and for the King  steady state \eqref{1.12b} let $\alpha $ satisfy $0 < \alpha  < 1/4.$   For  the operator $\left(\mathcal BR_0\right)_\pm(\gamma) \in B({\hat{C}_{\alpha}(I_0, L^2_{\rm o}(S_{1}))}) $ we define,
\beq\label{s.51}
\mathcal N_\pm:= \{  \gamma \in  \Gamma : 1\, \text{\rm is an eigenvalue of} \,\left(\mathcal BR_0\right)_\pm(\gamma)\}.
\ene
Then,
\beq\label{s.52}
\mathcal N_+= \mathcal N_-:= \mathcal N,
\ene
and $\mathcal N$ is independent of $\alpha$ as long as $\alpha$ fulfills the conditions imposed in the proposition.
\end{proposition}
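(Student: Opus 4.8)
The plan is to exploit two symmetries: a complex-conjugation symmetry that interchanges the $+$ and $-$ boundary values, and the smoothing property of $\mathcal B R_0$ supplied by Theorem~\ref{theo.s2b}, which will let me bootstrap the regularity of any eigenfunction across the scale of H\"older spaces.

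To prove $\mathcal N_+=\mathcal N_-$, I would introduce the complex-conjugation operator $C$, $(Cf)(\theta,E):=\overline{f(\theta,E)}$, and first check that $C$ maps each of $C_\alpha(I_0,L^2_{\rm o}(S_{1}))$ and $\hat C_\alpha(I_0,L^2_{\rm o}(S_{1}))$ antilinearly and isometrically onto itself: the subspace $L^2_{\rm o}(S_{1})$ is cut out of $L^2(S_1)$ by the real-linear relations \eqref{2.56c}--\eqref{2.64b}, H\"older seminorms are conjugation-invariant, and $C$ preserves $C^\infty(I_0,L^2_{\rm o}(S_{1}))$, hence its completion $\hat C_\alpha$. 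Next I would record that the kernel \eqref{2.76} of $R_0(z)$ satisfies $\overline{R_0(z,E,\theta,\theta_1)}=R_0(\bar z,E,\theta,\theta_1)$, every factor being real except $(\beta_l(E)-z)^{-1}$, so that $C R_0(z) C=R_0(\bar z)$ for $z\in\rho(\mathcal A_0)$; and since $\tilde{\mathcal B}$ in \eqref{2.21} has real coefficients and $\mathbf U$ is merely a pullback, $C\mathcal B C=\mathcal B$ as well, whence $C\,\mathcal B R_0(z)\,C=\mathcal B R_0(\bar z)$. Letting $z=\gamma+i\varepsilon$ with $\varepsilon\downarrow 0$ and invoking Theorem~\ref{theo.s2b}(a), this intertwining passes to the boundary as $C\,(\mathcal B R_0)_+(\gamma)\,C=(\mathcal B R_0)_-(\gamma)$ for $\gamma\in\Gamma$. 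Consequently, if $f\neq 0$ and $(\mathcal B R_0)_+(\gamma)f=f$, then $Cf\neq 0$ and $(\mathcal B R_0)_-(\gamma)(Cf)=Cf$ (using $\bar 1=1$), so $\gamma\in\mathcal N_+\Rightarrow\gamma\in\mathcal N_-$; the reverse inclusion is symmetric, which gives \eqref{s.52}.

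For the independence of $\alpha$, I would argue by a smoothing/bootstrap step. Let $\alpha$ and $\tilde\alpha$ both satisfy the hypotheses of the proposition, let $\gamma$ belong to $\mathcal N_\pm$ computed with exponent $\alpha$, and pick $0\neq f\in\hat C_\alpha(I_0,L^2_{\rm o}(S_{1}))$ with $(\mathcal B R_0)_\pm(\gamma)f=f$. Since $\hat C_\alpha(I_0,L^2_{\rm o}(S_{1}))\subset C_\alpha(I_0,L^2_{\rm o}(S_{1}))$ and $\alpha<1/4$, Theorem~\ref{theo.s2b} applied with $\alpha_1=\alpha$ and $\alpha_2=\tilde\alpha$ --- legitimate because $\tilde\alpha$ obeys exactly the constraints imposed there on $\alpha_2$ --- yields $(\mathcal B R_0)_\pm(\gamma)f\in\hat C_{\tilde\alpha}(I_0,L^2_{\rm o}(S_{1}))$; but this equals $f$, so $f\in\hat C_{\tilde\alpha}(I_0,L^2_{\rm o}(S_{1}))$, and the same identity now exhibits $\gamma$ as belonging to $\mathcal N_\pm$ computed with exponent $\tilde\alpha$. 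Since $\alpha$ and $\tilde\alpha$ play symmetric roles, $\mathcal N$ is the same for every admissible exponent. Here one should state explicitly that $(\mathcal B R_0)_\pm(\gamma)$, defined by the single limiting formula \eqref{s.17b}, acts compatibly on the whole scale $\{\hat C_\beta(I_0,L^2_{\rm o}(S_{1}))\}$ of H\"older spaces, so that landing the eigenfunction in another member of the scale automatically makes the eigenvalue equation valid there.

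The substantive input is Theorem~\ref{theo.s2b} (hence Theorem~\ref{theo.s2}), which already supplies the existence of the boundary values and the crucial gain of regularity $C_{\alpha_1}\to\hat C_{\alpha_2}$; granting that, the rest is bookkeeping. The one place that calls for genuine care --- and the reason I route the argument through the operator $C$ rather than through the kernels alone --- is verifying that $(\mathcal B R_0)_+(\gamma)$ and $(\mathcal B R_0)_-(\gamma)$ are honestly complex conjugates as bounded operators; that is why I would establish the intertwining $C\,\mathcal B R_0(z)\,C=\mathcal B R_0(\bar z)$ already on the resolvent set and only then take the limit, instead of attempting to conjugate the (principal value plus $\pi i$ delta-term) boundary kernels in \eqref{s.48} directly.
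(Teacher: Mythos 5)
Your proposal is correct, and for the identity $\mathcal N_+=\mathcal N_-$ it follows a genuinely different route from the paper. You obtain the intertwining $C\,(\mathcal B R_0)_+(\gamma)\,C=(\mathcal B R_0)_-(\gamma)$ from the reality of the kernel \eqref{2.76} and of $\mathcal B$, passing the relation $C\,\mathcal B R_0(z)\,C=\mathcal B R_0(\bar z)$ to the boundary via Theorem~\ref{theo.s2b}; since $C$ is an antilinear isometry of $C_\alpha(I_0,L^2_{\rm o}(S_{1}))$ and of $\hat C_\alpha(I_0,L^2_{\rm o}(S_{1}))$ (the relations \eqref{2.56c}--\eqref{2.64b} and the H\"older seminorms are conjugation-invariant, and $C$ preserves $C^\infty$ and hence the completion), the limit passes through and an eigenfunction $f$ of $(\mathcal B R_0)_+(\gamma)$ for the real eigenvalue $1$ yields the eigenfunction $\overline f$ of $(\mathcal B R_0)_-(\gamma)$. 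The paper instead argues via the quadratic form: from $f=(\mathcal B R_0)_\pm(\gamma)f$ and the selfadjointness of $\mathcal B$ it deduces $\lim_{\varepsilon\downarrow 0}{\rm Im}\,(f,R_0(\gamma\pm i\varepsilon)f)_{\mathcal H}=0$, which by the Sokhotski--Plemelj analysis forces the trace (delta-function) contributions in \eqref{s.48} to vanish, hence $(\mathcal B R_0)_+(\gamma)f=(\mathcal B R_0)_-(\gamma)f$ for the \emph{same} $f$. Your symmetry argument is shorter and more robust (it needs only that $\mathcal A_0$ and $\mathcal B$ are real), but it delivers slightly less: it relates the two eigenspaces by conjugation rather than showing they coincide, whereas the paper's computation additionally shows that the jump of the boundary values annihilates every eigenfunction --- the mechanism that ties $\mathcal N$ to the embedded eigenvalues of $\mathcal A$ in Theorem~\ref{eigen}. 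For the statement as posed, either suffices. Your $\alpha$-independence bootstrap (take $\alpha_1=\alpha$, $\alpha_2=\tilde\alpha$ in Theorem~\ref{theo.s2b} and use $f=(\mathcal B R_0)_\pm(\gamma)f\in\hat C_{\tilde\alpha}$, noting the boundary operators on the various $\hat C_\beta$ are restrictions of one another) is exactly the paper's argument.
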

\begin{proof} 
Suppose that for some  $f_\pm \in \hat{C}_{\alpha}(I_0, L^2_{\rm o}(S_{1}))$ and some $\gamma \in \Gamma$ we have
\beq\label{s.52xx}
f_\pm= \left(\mathcal BR_0\right)_\pm(\gamma)f_\pm.
\ene
Then, by \eqref{s.52xx}  and as $\mathcal B$ is selfadjoint,  for $\varepsilon >0,$
\beq\label{s.53}\begin{array}{l}
0= {\rm Im}\left( \mathcal BR_0(\gamma\pm i\varepsilon) f_\pm , R_0(\gamma\pm i \varepsilon)f_\pm\right)_{\mathcal H}= \\[5pt]  {\rm Im}\left([\mathcal BR_0(\gamma\pm i\varepsilon) - \left(\mathcal BR_0\right)_\pm(\gamma)] 
f_\pm, \right.  \left.  R_0(\gamma\pm i \varepsilon)f_\pm\right)_{\mathcal H}
+ {\rm Im}\left(f_\pm, R_0(\gamma\pm i \varepsilon)f_\pm\right)_{\mathcal H}.
\end{array}
\ene
By Theorem~\ref{theo.s2}  
\beq\label{2.53b}
| G(\gamma\pm i \varepsilon)f_\pm-G_\pm(\gamma)f_\pm| \leq C |\varepsilon|^{\rho},
\ene
with $ 0 < \rho <  \alpha(1-4\alpha).$
Further, by  \eqref{2.76}, \eqref{nueva}, and \eqref{2.53b}
\beq\label{s.55}\begin{array}{l}
\left| \left([\mathcal BR_0(\gamma\pm i\varepsilon) - \left(\mathcal BR_0\right)_\pm(\gamma)] 
f_\pm, R_0(\gamma\pm i \varepsilon)f_\pm\right)_{\mathcal H}\right|\leq \\[5pt]
C \varepsilon^{\rho} \sum_{l =1}^\infty  \int_{E_{\rm min}}^{E_0} dE   \ds \frac{1} {\left|\frac{(4\pi l)^2}{T(E)^2} -\gamma\right|+\varepsilon } |(f_\pm)_l(E)|,
\end{array}
 \ene
where $(f_\pm)_l(E)$ is defined as in \eqref{s.4}. By \eqref{s.55}
\beq\label{s.56}
\lim_{\varepsilon \downarrow 0}  \left([\mathcal BR_0(\gamma\pm i\varepsilon) - \left(\mathcal BR_0\right)_\pm(\gamma)] f_\pm
, R_0(\gamma\pm i \varepsilon)f_\pm\right)_{\mathcal H}=0.
\ene
By \eqref{s.53} and \eqref{s.56}
\beq\label{s.57}
   \lim_{\varepsilon \downarrow 0} {\rm Im} 
\left(f_\pm
, R_0(\gamma\pm i \varepsilon)f_\pm\right)_{\mathcal H}=0.
\ene
Further, by \eqref{2.76}
\beq\label{s.57b}\begin{array}{l}
\left(f_\pm, R_0(\gamma\pm i \varepsilon)f_\pm\right)_{\mathcal H}= 2  \sum_{l =1}^\infty  \int_{E_{\rm min}}^{E_0}
 \frac{T(E)}{|\varphi'(E)|} \left|(f_\pm)_l(E)\right|^2 \left( \frac{(4\pi l)^2}{T(E)^2} -\gamma\mp i\varepsilon\right)^{-1} dE.
 \end{array}
\ene
Let us take $\Gamma_K$ as in Theorem~\ref{theo.s2}  with $K=[\gamma].$
Note that by  \eqref{nueva},  Theorem~\ref{s.2},  \eqref{s.52xx}, and \eqref{der.1a} we have that $ \frac{1}{|\varphi'(E)|} f_\pm \in \hat{C}_{\alpha}(I_0, L^2_{\rm o}(S_{1})).$
Then, by \eqref{s.57} and  \eqref{s.57b},  it follows from  Theorems 5 and 6 of Section 2 of Chapter 1 on 
\cite{ya} that
\beq\label{s.57c}
 \sum_{l \in J}b_l(\theta, E, \gamma)   f_l(\lambda_l(\gamma)) (f_\pm)_l(\gamma)=0, 
\ene
where $ b_l(\theta, E, \gamma)$ is defined in \eqref{s.46c} and
\beq\label{s.57d}
{J}:= \left\{ l \in \mathbb N : \left(    \frac{(4\pi l)^2}{T(E_0)^2 }
 , \frac{(4\pi l)^2}{T(E_{\rm min})^2)}\right) \cap \{\gamma\} \neq\emptyset\right\}.
\ene   
See the proof of Theorem~\ref{theo.s2}, in particular \eqref{s.48}, for a similar argument. Finally, arguing as in the proof of Theorem~\ref{theo.s2} we prove,
\beq\label{s.57e}
\left( \mathcal B R_0\right)_+(\gamma) f_\pm= \left( \mathcal B R_0\right)_-(\gamma) f_\pm.
\ene
This proves that $ \mathcal N_+= \mathcal N_-= \mathcal N.$  Let $\alpha_1$ and $ \alpha_2$ satisfy the conditions imposed  to $\alpha$ in the proposition.Then,  if $ f_\pm \in \hat{C}_{\alpha_1}(I_0, L^2_{\rm o}(S_1))$ 
fulfills \eqref{s.52xx} it follows from Theorem~\ref{theo.s2b} that  $ f_\pm \in \hat{C}_{\alpha_2}(I_0, L^2_{\rm o}(S_{1} )).$ This proves that  
 $\mathcal N$ is independent of $\alpha.$ 
\end{proof}
\subsection{The eigenvalues of the Antonov operator}
In the following Theorem  we obtain properties of the eigenvalues of the Antonov operator $\mathcal A.$
\begin{theorem}\label{eigen}
Let us define $ \mathcal N:= \mathcal N_+=\mathcal N_-$ as in Proposition~\ref{prop2.3}. Then.
\begin{enumerate}
\item[\rm (a)] The set
\beq\label{s.59}
\mathcal M:=\mathcal N  \cup_{l \in \mathbb N }
\left\{  \frac{(4\pi l)^2}{T(E_0)^2 }
 , \frac{(4\pi l)^2}{T(E_{\rm min})^2 }\right\}
\ene
is closed and of Lebesgue measure zero.
\item[\rm (b)]
\beq\label{s.59b}
\Sigma_p(\mathcal A) \cap \Gamma \subset \mathcal N.
\ene
  Moreover,  the eigenvalues of $\mathcal A$ in $\Gamma$ have finite multiplicity.  
 \item[\rm (c)] (Antonov bound \cite{ant}-\cite{ant2}) If the discrete spectrum of $\mathcal A,$ that we denote by $\sigma_{\rm dis}(\mathcal A),$ is not empty,
 \beq\label{s.59bxz}
 \mathcal A \geq  {\rm min}\,\sigma_{\rm dis}(A) >0.
 \ene
 Moreover, if  $\sigma_{\rm dis}(A)$ is empty
  \beq\label{s.59cb}
 \mathcal A \geq  \frac{(4\pi)^2}{(T(E_0))^2} >0.
 \ene
 
  \end{enumerate}
 \end{theorem}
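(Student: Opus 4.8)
The plan is to treat (a), (b), (c) in turn, using the limiting absorption results of Theorems~\ref{theo.s1}, \ref{theo.s2}, \ref{theo.s2b}, Proposition~\ref{prop2.3}, and the Birman--Schwinger correspondence relating eigenvalues of $\mathcal A$ to the eigenvalue $1$ of $\mathcal BR_0$. For the closedness in (a): the set $\mathcal E:=\cup_{l\in\mathbb N}\{(4\pi l)^2/T(E_0)^2,\ (4\pi l)^2/T(E_{\rm min})^2\}$ is countable, hence Lebesgue null, and since $(4\pi l)^2\to\infty$ while $T(E_0),T(E_{\rm min})$ are fixed and positive (Proposition~\ref{per}), $\mathcal E$ has no finite accumulation point and is therefore closed. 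To conclude that $\mathcal M=\mathcal N\cup\mathcal E$ is closed it is enough to show that $\mathcal N$ is relatively closed in $\Gamma$, because $\mathcal N\subset\Gamma\subset\sigma(\mathcal A_0)$ then forces $\overline{\mathcal N}\setminus\mathcal N\subset\sigma(\mathcal A_0)\setminus\Gamma\subset\mathcal E$. So let $\gamma_n\in\mathcal N$ with $\gamma_n\to\gamma\in\Gamma$ and normalized $f_n\in\hat C_\alpha(I_0,L^2_{\rm o}(S_1))$ satisfying $f_n=(\mathcal BR_0)_+(\gamma_n)f_n$; all these points lie in some $\Gamma_K$, on which $\mu\mapsto(\mathcal BR_0)_+(\mu)$ is norm continuous with compact values (Theorem~\ref{theo.s2b}), so the family is collectively compact and, after passing to a subsequence, $f_n\to f$ with $\|f\|=1$ and $f=(\mathcal BR_0)_+(\gamma)f$, i.e.\ $\gamma\in\mathcal N$.

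The measure-zero part of (a) is the technical core. By Birman--Schwinger, if $\mathcal BR_0(z)f=f$ with $f\neq0$ and $z\in\rho(\mathcal A_0)$ then $g:=R_0(z)f\in D[\mathcal A_0]$ is nonzero and $\mathcal A g=zg$; as $\mathcal A$ is selfadjoint this is impossible for $z\in\mathbb C_+$, so $I-\mathcal BR_0(z)$ is injective, hence (Fredholm alternative, $\mathcal BR_0(z)$ compact by Theorem~\ref{theo.s1}(d)) invertible there. Thus $z\mapsto I-\mathcal BR_0(z)$ is a holomorphic, identity-plus-compact valued, everywhere invertible family on $\mathbb C_+$, H\"older continuous up to each $\Gamma_K$ (Theorem~\ref{theo.s2b}), with $\|\mathcal BR_0(z)\|\to0$ as $\mathrm{Im}\,z\to\infty$ (Theorem~\ref{theo.s1}(c)). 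Factoring $\mathcal B$ through $L^2((-R_0,R_0))$ (via $g\mapsto\int v\,g\,dv$ followed by multiplication by $4\pi v|\varphi'(E)|$), so that $1\in\sigma_p(\mathcal BR_0(z))$ iff $1$ is an eigenvalue of the corresponding operator on $L^2((-R_0,R_0))$, and using the smoothing from Theorem~\ref{theo.s1} to place that operator in a trace ideal, one obtains a scalar perturbation determinant $d(z)$: holomorphic on $\mathbb C_+$, continuous up to each $\Gamma_K^\circ$, not identically zero since $d(z)\to1$ as $\mathrm{Im}\,z\to\infty$, and with $d_+(\gamma)=0$ precisely when $I-(\mathcal BR_0)_+(\gamma)$ is not invertible, that is, precisely on $\mathcal N\cap\Gamma_K^\circ$ (Fredholm alternative and compactness of $(\mathcal BR_0)_+(\gamma)$, Theorem~\ref{theo.s2b}). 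By the boundary uniqueness theorem for holomorphic functions on a half-plane, this boundary zero set is Lebesgue null; since $\Gamma$ is a countable union of such $\Gamma_K$, $\mathcal N$ is null, and so is $\mathcal M=\mathcal N\cup\mathcal E$. This can equivalently be phrased through the analytic Fredholm machinery of \cite{kur}; the delicate point is the trace-ideal membership (equivalently, producing a bona fide scalar holomorphic function whose boundary zeros pick out $\mathcal N$) and control of it up to $\Gamma$, everything else being routine.

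For (b): let $\gamma\in\Sigma_p(\mathcal A)\cap\Gamma$ with eigenfunction $\psi\neq0$, so $(\mathcal A_0-\gamma)\psi=\mathcal B\psi$. In the original $(x,v)$ variables $\mathcal B\psi=4\pi v|\varphi'(E)|\rho_\psi$ with $\rho_\psi(x)=\int v\,\psi\,dv$, and $\rho_\psi'(x)=\int v\,\partial_x\psi\,dv=\int\tilde{\mathcal D}_{\rm o}\psi\,dv$ (legitimate since $\psi\in D[\tilde{\mathcal A}_0]\subset D[\tilde{\mathcal D}_{\rm o}]$, by Remark~\ref{rem} and \eqref{2.17b}), so $\rho_\psi$ is Lipschitz; together with the H\"older regularity of $v|\varphi'(E)|$ in the energy--angle variables (as in the proof of Theorem~\ref{theo.s2b}) this gives $\mathcal B\psi\in\hat C_\alpha(I_0,L^2_{\rm o}(S_1))$. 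Since $\mathcal A_0$ has purely absolutely continuous spectrum, $s\text{-}\lim_{\varepsilon\downarrow0}i\varepsilon R_0(\gamma+i\varepsilon)=-E_{\mathcal A_0}(\{\gamma\})=0$, so $\psi=s\text{-}\lim_{\varepsilon\downarrow0}R_0(\gamma+i\varepsilon)\mathcal B\psi$ in $\mathcal H$; applying the bounded operator $\mathcal B$ and identifying the limit via Theorem~\ref{theo.s2b} yields $\mathcal B\psi=(\mathcal BR_0)_+(\gamma)\mathcal B\psi$ (and likewise with the $-$ branch). Here $\mathcal B\psi\neq0$ (otherwise $\gamma\in\Sigma_p(\mathcal A_0)=\emptyset$), hence $\gamma\in\mathcal N_+=\mathcal N$, which is \eqref{s.59b}. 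Finally $\psi\mapsto\mathcal B\psi$ is injective on $\ker(\mathcal A-\gamma)$ (if $\mathcal B\psi=0$ then $\psi\in\ker(\mathcal A_0-\gamma)=\{0\}$) and maps it into the finite-dimensional space $\ker(I-(\mathcal BR_0)_+(\gamma))$; so the eigenvalue has finite multiplicity.

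For (c): $\mathcal A$ is selfadjoint, strictly positive and boundedly invertible (Theorem~7.9 of \cite{hrs}), so $\inf\sigma(\mathcal A)>0$; by \eqref{2.81c}, $\inf\sigma_{\rm ess}(\mathcal A)=(4\pi)^2/T(E_0)^2$; and since $(\mathcal Bg,g)_{\mathcal H}=4\pi\int\bigl|\int v\,g\,dv\bigr|^2\,dx\geq0$ we have $\mathcal B\geq0$, whence $\mathcal A\leq\mathcal A_0$ and $\inf\sigma(\mathcal A)\leq(4\pi)^2/T(E_0)^2$. If $\sigma_{\rm dis}(\mathcal A)=\emptyset$ then $\sigma(\mathcal A)=\sigma_{\rm ess}(\mathcal A)$, so $\mathcal A\geq\inf\sigma_{\rm ess}(\mathcal A)=(4\pi)^2/T(E_0)^2>0$, which is \eqref{s.59cb}. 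If $\sigma_{\rm dis}(\mathcal A)\neq\emptyset$ then $\inf\sigma(\mathcal A)$ lies strictly below $\inf\sigma_{\rm ess}(\mathcal A)$, hence it is an isolated point of the spectrum and, $\mathcal B$ being relatively $\mathcal A_0$-compact, an eigenvalue of finite multiplicity; therefore $\inf\sigma(\mathcal A)=\min\sigma_{\rm dis}(\mathcal A)$, giving $\mathcal A\geq\min\sigma_{\rm dis}(\mathcal A)>0$, which is \eqref{s.59bxz}.
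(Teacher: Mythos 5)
Your argument follows the paper's route in all but one place. The closedness of $\mathcal M$ (via norm continuity of $\gamma\mapsto(\mathcal BR_0)_\pm(\gamma)$ plus compactness; the paper phrases it through the stability of bounded invertibility, you through collective compactness of the eigenfunctions --- these are the same argument in contrapositive form), part (b) (regularity of $\mathcal B\psi$, the identity $\psi=\lim_{\varepsilon\downarrow0}R_0(\gamma\pm i\varepsilon)\mathcal B\psi$, the fixed-point equation for $\mathcal B\psi$, injectivity of $\psi\mapsto\mathcal B\psi$ on the eigenspace for finite multiplicity), and part (c) all coincide with the paper's proof. The one genuine divergence is the measure-zero statement for $\mathcal N$. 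The paper disposes of it by covering each band of $\Gamma$ with compact intervals and citing the abstract result for compact-operator-valued functions that are analytic in $\mathbb C_\pm$ and H\"older continuous in operator norm up to the boundary (Theorem 3 and Remark 4 of Section 8, Chapter 1, together with Lemma 5 of Section 1, Chapter 4, of \cite{ya}); this needs exactly the properties already established in Theorem~\ref{theo.s2b} and nothing more. You instead propose to manufacture a scalar perturbation determinant $d(z)$ by factoring $\mathcal B$ through $L^2((-R_0,R_0))$ and invoking trace-ideal membership, and then to apply boundary uniqueness for holomorphic functions. That route can be made to work (cf.\ the trace-class approach of \cite{vdb}), but as written it leaves the decisive step unproved: neither the membership of the reduced operator in a determinant-admitting ideal nor, more seriously, the continuity of $d$ in that ideal's topology up to $\Gamma_K$ is established, and operator-norm H\"older continuity (which is all Theorem~\ref{theo.s2b} gives) does not yield it. Since you yourself note parenthetically that the claim "can equivalently be phrased through the analytic Fredholm machinery of \cite{kur}," the honest summary is that your primary implementation has a real unclosed step, but your fallback is precisely the paper's argument and requires no new estimates.

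Two small points. In (b) you assert that $\rho_\psi$ is Lipschitz because $\rho_\psi'=\int v\,\partial_x\psi\,dv$; from $\tilde{\mathcal D}_{\rm o}\psi\in\tilde{\mathcal H}_{\rm even}$ one only gets $\rho_\psi'\in L^2$ by Cauchy--Schwarz (this is the content of (A.3) of \cite{hrs}), hence $\rho_\psi$ is H\"older of exponent $1/2$, not Lipschitz; this is still enough for $\mathcal B\psi\in\hat C_\alpha(I_0,L^2_{\rm o}(S_1))$ after composing with the $\tfrac12$-H\"older map $E\mapsto x(\theta,E)$, so the conclusion survives. In (c), your inference that $\sigma_{\rm dis}(\mathcal A)\neq\emptyset$ forces $\inf\sigma(\mathcal A)<\inf\sigma_{\rm ess}(\mathcal A)$ is not justified (a discrete eigenvalue could a priori sit in a spectral gap above the bottom of the essential spectrum); the paper's own proof of (c) is equally terse on this point, so you are at no disadvantage, but the clean statement one actually obtains is $\mathcal A\ge\min\bigl(\min\sigma_{\rm dis}(\mathcal A),\,(4\pi)^2/T(E_0)^2\bigr)>0$.
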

 \begin{proof} We first prove that $\mathcal M$ is closed. Suppose that $\gamma_n \in \mathcal M$ converges to $ \gamma \in \Gamma.$ Then, for $n$ large enough $\gamma_n \in \mathcal N.$  Let us prove that $ \gamma \in \mathcal N.$ Otherwise, as by Theorem~ \ref{theo.s2b}, $(\mathcal B R_0)_\pm(\gamma)$ is compact, we would have that $(\mathcal I-(\mathcal B R_0)_\pm(\gamma))^{-1}$ would be a bounded operator on
  ${\hat{C}_{\alpha}(I_0, L^2_{\rm o}(S_{1}))} .$ However, by the stability of bounded invertibility theorem (Theorem 1.16 in page 196 0f \cite{kato}),  $(\mathcal I-(\mathcal B R_0)_\pm(\gamma_n))^{-1}$ would be a bounded operator on
  ${\hat{C}_{\alpha}(I_0, L^2_{\rm o}(S_{1}))} ,$ for $n$ large enough, in contradiction with the assumption that $\gamma_n \in \mathcal N$ for $n$ large enough. Here we used that by Theorem~\ref{theo.s2b} $ (\mathcal B R_0)_\pm(\gamma)$ is continuos in the operator norm of   ${\hat{C}_{\alpha}(I_0, L^2_{\rm o}(S_{1}))} .$ Further, if 
  $\gamma_n \in \mathcal M$ converges to $\gamma$ where
  $$
 \gamma \in \cup_{l\in \mathbb N} \left\{  \frac{(4\pi l)^2}{T(E_0)^2 }
 , \frac{(4\pi l)^2}{T(E_{\rm min})^2 }\right\},
  $$
  then, $ \gamma \in \mathcal M$ by the definition of $\mathcal M.$ This proves that $\mathcal M$ is closed.
 Let us prove  that $\mathcal M$ is of measure zero. It is enough to prove that 
 \beq\label{s.59bx}
F_l:= \mathcal N \cap \left(  \frac{(4\pi l)^2}{T(E_0)^2 }
 , \frac{(4\pi l)^2}{T(E_{\rm min})^2 }\right ), \qquad l=1,\dots
 \ene
 has measure zero. Let us take sequences $\{a_n^{(l)}\},$ and  $\{b_n^{(l)}\},$ where  $\{a_n^{(l)}\}$  is monotonic decreasing and  $\{b_n^{(l)}\}$ is monotonic increasing, $\ds \frac{(4\pi l)^2}{T(E_0)^2 } < a_n^{(l)} < b_n^{(l)} <
 \frac{(4\pi l)^2}{T(E_{\rm min})^2 }, $  $\ds \lim_{n\to \infty}a_n^{(l)}=  \frac{(4\pi l)^2}{T(E_0)^2 },$ and
 $\ds \lim_{n\to \infty}b_n^{(l)}=  \frac{(4\pi l)^2}{T( E_{\rm min})^2 }.$ We have,
 \beq\label{s.59c}
 F_l = \cup_{n=1}^\infty  \left( \mathcal N \cap [a_n^{(l)}, b_n^{(l)}]\right).
 \ene 
  By   Theorem~\ref{theo.s2b} $(\mathcal B R_0)_\pm(\gamma))$ is compact in  $\hat{C}_{\alpha}(I_0, L^2_{\rm o}(S_{1})),$ for $ \gamma \in \rho(\mathcal A_0) \cup \Gamma,$ and it is continuous in operator norm in  $\hat{C}_{\alpha}(I_0, L^2_{\rm o}(S_{1})),$ for $ \gamma \in \rho(\mathcal A_0) \cup [a_n^{(l)}, b_n^{(l)}].$ Hence,
 by \eqref{s.1b}, \eqref{nueva}, Theorem 3, and Remark 4 in Section 8 of Chapter 1 of \cite{ya}, and of Lemma 5 in Section 1 of Chapter 4 of \cite{ya}, $ \mathcal N \cap [a_n^{(l)}, b_n^{(l)}]$ is of measure zero. Then ,by \eqref{s.59c} $F_l$ is of measure zero. This proves that  $\mathcal M$ is of zero measure. We now prove (b). Suppose that $\gamma \in \Gamma$ belongs  to $\Sigma_p(\mathcal A).$ Then, for some $ \psi \in D[\mathcal A]= D[\mathcal A_0],$ with $ \psi \neq 0,$
\beq\label{s.60}
(\mathcal A_0 -\gamma) \psi= f,
\ene
where we denote,
\beq\label{s.61}
f(\theta,E):= (\mathcal B \psi)(\theta,E)= 4 \pi  |\varphi'(E)|\, v(\theta,E)\, \int_{U_0(x)}^{E_0} \psi(\theta(x,\lambda), \lambda) \, d\lambda, \qquad x= x(\theta,E).
\ene
Recall that $\mathcal B$ is defined in \eqref{2.81b}. By \eqref{2.51} and Sobolev's imbedding theorem (Theorem 5.4 in pages 97-98 of \cite{adams}) for $E$ fixed $\psi(\theta,E)$ is continuously differentiable in $\theta$ and
\beq\label{s.61b}
|\psi(\theta,E)|+|\partial_\theta \psi(\theta,E)| \leq C \| \psi(\cdot, E)\|_{H_2((0,1))}, \qquad \theta \in S_1.
\ene
Further, assuming that $ U_0(x_1) \leq U_0( x_2),$ by \eqref{2.36} for any $ 0 <\delta <1,$ 
\beq\label{s.61c}\begin{array}{l}
 \left|\int_{U_0(x_2)}^{E_0} d\lambda [ \psi(\theta(x_1,\lambda), \lambda)-\psi(\theta(x_2,\lambda), \lambda)]  \right| \leq 
\\ [5pt] C  \int _{U_0(x_2)}^{E_0} \, d\lambda \frac{1}{(\lambda-U_0(x_2))
^{(1-\delta)/2}} |\int_{x_1}^{x_2} \frac{1}{(\lambda-U_0(y))^{\delta/2}}  \,dy|
\| \psi(\cdot, \lambda)\|_{H_2((0,1))}. 
\end{array} 
\ene
Further, by H\"older's inequality with $1/p+1/q=1,$  $ p>1,$ and  $\delta >0,$ such that, $   \delta q <1,$
we have, for $ \lambda > U_0(x_2),$
\beq\label{s.61jj}
 \left|\int_{x_1}^{x_2} \frac{1}{(\lambda -U_0(y))^{\delta/2}}  \,dy\right |\leq |x_1-x_2|^{1/p} \left[\left |\int_{x_1}^{x_2} \frac{1}{(\lambda -U_0(y))^{\delta q/2}}  \,dy \right |\right]^{1/q}\leq C   |x_1-x_2|^{1/p}.
\ene
Moreover, by  Schwarz's inequality, 
\beq\label{s.61cdd}\begin{array}{c}
\int _{U_0(x_2)}^{E_0}\, d \lambda \ds \frac{1}{(\lambda-U_0(x_2))^{(1-\delta)/2}}   \| \psi(\cdot, \lambda)\|_{H_2((0,1))}\leq
\left[   \int _{U_0(x_2)}^{E_0} \, d\lambda\frac{1}{(\lambda-U_0(x_2))^{(1-\delta)}}   \right]^{1/2} \\[5pt]\left[ \int _{U_0(x_2)}^{E_0}\, d\lambda     \| \psi(\cdot, \lambda)\|_{H_2((0,1))}^{2}\right]^{1/2} < \infty,
\end{array}
\ene
where in the last inequality we used \eqref{2.51}. By \eqref{s.61c}, \eqref{s.61jj}, and\eqref{s.61cdd},
\beq\label{s.61xxxx}
 \left|\int_{U_0(x_2)}^{E_0} d\lambda\,  [ \psi(\theta(x_1,\lambda), \lambda)-\psi(\theta(x_2,\lambda), \lambda)]  \right| 
\leq C |x_1-x_2|^{1/p}.
\ene
Furthermore, by \eqref{2.51}, \eqref{s.61b} and Schwarz's inequality.
 \beq\label{s.61e}
 \left|    \int_{U_0(x_1)}^{U_0(x_2)} \psi(\theta(x,\lambda) \, d\lambda    \right |\leq C \sqrt{x_1-x_2}.
 \ene
  Then, by \eqref{s.60},  \eqref{s.61xxxx}, \eqref{s.61e}, \eqref{der.1}, \eqref{der.1a}, it follows that $f \in {\hat{C}_{\alpha}(I_0, L^2_{\rm o}(S_{1}))}, $
  with $\alpha$ as in Proposition~\ref{prop2.3}. Further, 
\beq\label{s.62}
\lim_{ \varepsilon \downarrow 0}  R_0(\gamma\pm i \varepsilon) (\mathcal A_0 -\gamma) \psi=
\psi  \pm i \lim_{ \varepsilon \downarrow 0} \varepsilon R_0(\gamma\pm i \varepsilon) \psi = \psi,
\ene 
 where in the last equality in \eqref{s.62} we used \eqref{2.76}. Hence, by \eqref{s.60}, \eqref{s.61}, and \eqref{s.62}
 \beq\label{s.63}
\left( \mathcal B R_0\right)_\pm(\gamma) f=  \lim_{\varepsilon \downarrow 0} \mathcal B R_0(\gamma\pm i \varepsilon) (\mathcal A_0-\gamma) \psi= \mathcal B \psi= f.
 \ene
 This proves that $\gamma \in \mathcal N,$ as  $ f\neq 0,$  since if $f=0$  by \eqref{s.60} $\gamma$ would be an eigenvalue of $\mathcal A_0,$ but this is imposible as $\mathcal A_0$ has no eigenvalues. The multiplicity of the eigenvalues of $\mathcal A$ in $\Gamma$ is finite, because the multiplicity  of an eigenvalue $\gamma$ of $\mathcal A$ in $\Gamma$ is at most equal to the multiplicity of the eigenvalue $1$ of $G_\pm(\gamma),$  that  is finite because as $G_\pm(\gamma)$ is compact   $1$ is an eigenvalue of finite multiplicity of  $G_\pm(\gamma)$.   
Let us prove  (c). By  \eqref{2.81c} and  Theorem  7.9 of \cite{hrs} zero belongs to the resolvent set of $\mathcal A.$  Then, (c) follows since by \eqref{2.81c} we have that $(4\pi)^2/ T(E_0)^2$ is the bottom of the essential spectrum of $\mathcal A.$
This concludes the proof of the theorem.
 \end{proof}
 
 \subsection{The absolutely continuous spectrum of the Antonov operator}
 In this subsection we  identify the absolutely continuous spectrum of  the Antonov operator $\mathcal A.$ 
 By the second resolvent equation,
 \beq\label{s.65}
 R_{\mathcal A}(z)= R_{\mathcal A_0}(z)+  R_{\mathcal A}(z) \mathcal B  R_{\mathcal A_0}(z), \qquad z \in \rho(\mathcal A) \cap \rho(\mathcal A_0).
 \ene
 Note that, as by  Theorem 5.19 of \cite{hrs}  $ \mathcal B  R_{\mathcal A_0}(z)$ is compact in $\mathcal H$ for $z\in \rho(\mathcal A_0),$ the operator $\mathcal I-\mathcal B  R_{\mathcal A_0}(z)$ is  invertible for  $z\in \rho(\mathcal A_0),$  with bounded inverse, unless $1$ is an eigenvalue of $\mathcal B  R_{\mathcal A_0}(z).$ However, by \eqref{s.65} in this case $R_{\mathcal A_0}(z)$ would not be invertible, but this is impossible for $z \in \rho(\mathcal A_0).$ Then, we have,
 \beq\label{s.66}
 R_{\mathcal A}(z)= R_{\mathcal A_0}(z)  \left(\mathcal  I-\mathcal B R_{\mathcal A_0}(z) \right)^{-1}, \qquad   \qquad z \in \rho(\mathcal A) \cap \rho(\mathcal A_0).
\ene
We denote
\beq\label{s.83b}
{\hat{C}_{\alpha,{\rm b}}(I_0, L^2_{\rm o}(S_{1}))} :=\left\{ f \in {\hat{C}_{\alpha}(I_0, L^2_{\rm o}(S_{1}))}: \frac{1}{|\varphi'|} f\, \text{\rm is bounded}\right\}.
\ene
Observe that, in the  case of the polytrope \eqref{1.12} with $k=1,$ and for  the King steady state  \eqref{1.12b}, $ {\hat{C}_{\alpha,{\rm b}}(I_0, L^2_{\rm o}(S_{1}))} = {\hat{C}_{\alpha,}(I_0, L^2_{\rm o}(S_{1}))}.$ Furthermore, for  $\mu \in\rho(\mathcal A_0)\cup\sigma_{\rm ess}(\mathcal A)\setminus\mathcal M$ the operator  $ (\mathcal I-(\mathcal B R_0)_\pm(\mu))^{-1}$ is a bijection in
  $ {\hat{C}_{\alpha,{\rm b}}(I_0, L^2_{\rm o}(S_{1}))}$, as we prove below. Suppose that $ f\in {\hat{C}_{\alpha,{\rm b}}(I_0, L^2_{\rm o}(S_{1}))},$ and denote,
  $$
  g:= (\mathcal I-(\mathcal B R_0)_\pm(\mu))^{-1} f. 
  $$ 
Hence, by  Theorem~\ref{theo.s2b}, as 
$$
  \frac{1}{|\varphi'|}  g=  \frac{1}{|\varphi'|} f+  \frac{1}{|\varphi'|} (\mathcal B R_0)_\pm(\mu) g, 
$$
we have that $ \frac{1}{|\varphi'|}  g$ is bounded. Further if $ f \in {\hat{C}_{\alpha,{\rm b}}(I_0, L^2_{\rm o}(S_{1}))},$ then,
$$
  f=(\mathcal I-(\mathcal B R_0)_\pm(\mu))^{-1}  (\mathcal I-(\mathcal B R_0)_\pm(\mu ))f.
$$ 
Further, $(\mathcal I-(\mathcal B R_0)_\pm(\mu))f \in   {\hat{C}_{\alpha,{\rm b}}(I_0, L^2_{\rm o}(S_{1}))}.$ Then, $(\mathcal I-(\mathcal B R_0)_\pm(\mu))^{-1} $ is onto  $ {\hat{C}_{\alpha,{\rm b}}(I_0, L^2_{\rm o}(S_{1}))}.$  Moreover, we have,  $ {\hat{C}_{\alpha,{\rm b}}(I_0, L^2_{\rm o}(S_{1}))} \subset \mathcal H.$
 
Let $\alpha,$ be as in Proposition~\ref{prop2.3} and $\mathcal M$ be as in Theorem~\ref{eigen}. For   $ f \in {\hat{C}_{\alpha, {\rm b}}(I_0, L^2_{\rm o}(S_{1}))} $   and $\mu \in\sigma_{\rm ess}(\mathcal A)
\setminus\mathcal M,$ we define,

\beq\label{s.67}
\ds f_{\mu\pm  i\varepsilon}:=  (\mathcal I-(\mathcal B R_0)_\pm(\mu \pm i\varepsilon))^{-1}f, \qquad \varepsilon \geq 0.
\ene
\begin{proposition}\label{prop.stone}
Let $\alpha$ be as in Proposition~\ref{prop2.3}. Then,  for every $f, g \in  {\hat{C}_{\alpha, {\rm b}}(I_0, L^2_{\rm o}(S_{1}))} $  and for every $\mu \in\sigma_{\rm ess}(\mathcal A)
\setminus\mathcal M,$
\beq\label{s.68}\begin{array}{l}
\lim_{\varepsilon \downarrow 0}\frac{1}{2\pi i}\left( \left[R_{\mathcal A}(\mu+i\varepsilon)- R_{\mathcal A}(\mu- i\varepsilon)\right] f, g \right)_{\mathcal H}= \sum_{l=1}^\infty  \ds \chi_{(\beta_{l, {\rm min}}, \beta_{l, {\rm max}})}(\mu) \ds\frac{T(\lambda_l(u))}{|\varphi'(\lambda_l(\mu))|} \\ p_l(\lambda_l(\mu)) 
 
 (\mathbf Ff_{\mu \pm i0})_{l}(\lambda_l(\mu)) \overline{(\mathbf Fg_{\mu\pm i0})_{l}}(\lambda_l(\mu)),
\end{array}
\ene
where the limit is uniform for  $\mu $  in compact sets in $\sigma_{\rm ess}(\mathcal A)
\setminus\mathcal M.$ Note that as $f_{\mu\pm i0}(\lambda), $ and  $g_{\mu\pm i0}(\lambda)$ 
  are   jointly locally H\"older continuous in $\mu$ and $\lambda$ we can take the traces at   $\lambda =\lambda_l(\mu)) $ in \eqref{s.68}. 
\end{proposition}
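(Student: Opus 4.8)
First I would upgrade the factorized identity \eqref{s.66} to a \emph{symmetric} resolvent identity. Starting from the second resolvent equation in the form $R_{\mathcal A}(z)=R_0(z)+R_0(z)\mathcal B R_{\mathcal A}(z)$, subtracting the identities at $z=\mu+i\varepsilon$ and $z=\mu-i\varepsilon$, regrouping the product terms, and using $\mathcal I+\mathcal B R_{\mathcal A}(z)=(\mathcal I-\mathcal B R_0(z))^{-1}$ together with $\mathcal I+R_{\mathcal A}(z)\mathcal B=(\mathcal I-R_0(z)\mathcal B)^{-1}$ (valid for $z\in\rho(\mathcal A_0)$ since $\mathcal B R_0(z)$ and $R_0(z)\mathcal B$ are compact on $\mathcal H$ by Theorem~5.19 of \cite{hrs} and $1$ cannot be an eigenvalue of either, lest $R_0(z)$ fail to be invertible), one obtains, for $\varepsilon>0$,
\[
R_{\mathcal A}(\mu+i\varepsilon)-R_{\mathcal A}(\mu-i\varepsilon)=\bigl(\mathcal I-R_0(\mu-i\varepsilon)\mathcal B\bigr)^{-1}\bigl[R_0(\mu+i\varepsilon)-R_0(\mu-i\varepsilon)\bigr]\bigl(\mathcal I-\mathcal B R_0(\mu+i\varepsilon)\bigr)^{-1}.
\]
Since $\mathcal B^{\dagger}=\mathcal B$ (Lemma~4.8 of \cite{hrs}), $R_0(\bar z)^{\dagger}=R_0(z)$, and $\bigl((\mathcal I-R_0(\mu-i\varepsilon)\mathcal B)^{-1}\bigr)^{\dagger}=(\mathcal I-\mathcal B R_0(\mu+i\varepsilon))^{-1}$, pairing with $f,g$ collapses the right-hand side to $\bigl([R_0(\mu+i\varepsilon)-R_0(\mu-i\varepsilon)]f_{\mu+i\varepsilon},g_{\mu+i\varepsilon}\bigr)_{\mathcal H}$, with $f_{\mu+i\varepsilon},g_{\mu+i\varepsilon}$ exactly as in \eqref{s.67} (here $(\mathcal B R_0)_+(\mu+i\varepsilon)=\mathcal B R_0(\mu+i\varepsilon)$ because $\mu+i\varepsilon\in\mathbb C_+\subset\rho(\mathcal A_0)$). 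Running the same computation from $R_{\mathcal A}(z)=R_0(z)+R_{\mathcal A}(z)\mathcal B R_0(z)$ produces the twin identity with $f_{\mu-i\varepsilon},g_{\mu-i\varepsilon}$; as the two left-hand sides agree, the ``$+i0$'' and ``$-i0$'' forms of \eqref{s.68} must coincide, which is what the $\pm$ in the statement records.

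Next I would establish the convergence of the amplitudes $f_{\mu\pm i\varepsilon}\to f_{\mu\pm i0}$ and $g_{\mu\pm i\varepsilon}\to g_{\mu\pm i0}$ in $\hat{C}_{\alpha,{\rm b}}(I_0, L^2_{\rm o}(S_{1}))$, with rate $O(\varepsilon^{\rho})$, uniformly for $\mu$ in compact subsets $K$ of $\sigma_{\rm ess}(\mathcal A)\setminus\mathcal M$. From $(\mathcal I-A_\varepsilon)^{-1}-(\mathcal I-A_0)^{-1}=(\mathcal I-A_\varepsilon)^{-1}(A_\varepsilon-A_0)(\mathcal I-A_0)^{-1}$ with $A_\varepsilon=(\mathcal B R_0)_\pm(\mu\pm i\varepsilon)$, the H\"older continuity up to $\Gamma$ from Theorem~\ref{theo.s2b} controls $\|A_\varepsilon-A_0\|$, the invertibility of $\mathcal I-(\mathcal B R_0)_\pm(\mu)$ on $\hat{C}_{\alpha,{\rm b}}(I_0, L^2_{\rm o}(S_{1}))$ noted before \eqref{s.67} gives invertibility at $\varepsilon=0$, and the stability of bounded invertibility (Theorem~1.16, p.~196, of \cite{kato}) then makes $\|(\mathcal I-A_\varepsilon)^{-1}\|$ bounded uniformly for small $\varepsilon$ and $\mu\in K$, exactly as in the proof of Theorem~\ref{eigen}. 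Applying the same argument to $\mu\mapsto(\mathcal B R_0)_\pm(\mu)\in B({C}_{\alpha_1}(I_0, L^2_{\rm o}(S_{1})),\hat{C}_{\alpha_2}(I_0, L^2_{\rm o}(S_{1})))$ also yields the joint local H\"older continuity of $f_{\mu\pm i0}(\lambda)$ and $g_{\mu\pm i0}(\lambda)$ in $(\mu,\lambda)$ asserted in the statement.

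The technical core is a limiting absorption principle for $\mathcal A_0$, sandwiched in $\hat{C}_{\alpha,{\rm b}}(I_0, L^2_{\rm o}(S_{1}))$. From \eqref{2.74}, \eqref{2.74b}, \eqref{2.76} and the diagonalization $\hat{\mathcal A}_0=\oplus_l(4\pi l)^2/T(E)^2$ one has $(R_0(z)h,k)_{\mathcal H}=\sum_l\int_{I_0}(\beta_l(E)-z)^{-1}(\mathbf F h)_l(E)\overline{(\mathbf F k)_l(E)}\tfrac{T(E)}{|\varphi'(E)|}\,dE$, with $\beta_l(E)=(4\pi l)^2/T(E)^2$ as in \eqref{sp.1}, so that $\bigl([R_0(\mu+i\varepsilon)-R_0(\mu-i\varepsilon)]h,k\bigr)_{\mathcal H}=2i\sum_l\int_{I_0}\frac{\varepsilon}{(\beta_l(E)-\mu)^2+\varepsilon^2}(\mathbf F h)_l(E)\overline{(\mathbf F k)_l(E)}\tfrac{T(E)}{|\varphi'(E)|}\,dE$. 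For $h,k\in\hat{C}_{\alpha,{\rm b}}(I_0, L^2_{\rm o}(S_{1}))$ and $\mu\in K$ --- so $\mu$ stays a fixed distance from every endpoint $\beta_{l,{\rm min}},\beta_{l,{\rm max}}$, all of which lie in $\mathcal M$ --- I would split off the finitely many $l$ with $\mu\in(\beta_{l,{\rm min}},\beta_{l,{\rm max}})$. For the remaining $l$, $|\beta_l(E)-\mu|\ge{\rm dist}(\mu,[\beta_{l,{\rm min}},\beta_{l,{\rm max}}])$, which tends to $\infty$ and stays bounded below over $K$; by Cauchy--Schwarz and $\sum_l\|(\mathbf F h)_l\|_{\mathcal Q}\|(\mathbf F k)_l\|_{\mathcal Q}\le\|h\|_{\mathcal H}\|k\|_{\mathcal H}$ this part is $O(\varepsilon)\|h\|_{\mathcal H}\|k\|_{\mathcal H}$ and vanishes uniformly. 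For each retained $l$, the change of variables $E\mapsto\beta=\beta_l(E)$, $dE=|p_l(\beta)|\,d\beta$ (see \eqref{s.43}) recasts the term as $2i\int_{\beta_{l,{\rm min}}}^{\beta_{l,{\rm max}}}\frac{\varepsilon}{(\beta-\mu)^2+\varepsilon^2}\Phi_l(\beta)\,d\beta$ with $\Phi_l(\beta)=(\mathbf F h)_l(\lambda_l(\beta))\overline{(\mathbf F k)_l(\lambda_l(\beta))}\tfrac{T(\lambda_l(\beta))}{|\varphi'(\lambda_l(\beta))|}|p_l(\beta)|$; membership in $\hat{C}_{\alpha,{\rm b}}(I_0, L^2_{\rm o}(S_{1}))$ is precisely what makes $\tfrac{1}{|\varphi'|}(\mathbf F h)_l$ uniformly bounded, $\Phi_l$ integrable on $[\beta_{l,{\rm min}},\beta_{l,{\rm max}}]$, and $\Phi_l$ continuous at $\mu$. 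The Poisson-kernel convergence $\frac{\varepsilon}{\pi((\beta-\mu)^2+\varepsilon^2)}\to\delta_\mu$ then yields both the uniform bound $\bigl|([R_0(\mu+i\varepsilon)-R_0(\mu-i\varepsilon)]h,k)_{\mathcal H}\bigr|\le C_K\|h\|_{\hat{C}_{\alpha,{\rm b}}}\|k\|_{\hat{C}_{\alpha,{\rm b}}}$ (the kernel has $L^1$-norm $\pi$) and, since $\mu\mapsto f_{\mu\pm i0}$ is H\"older and hence $(\mu,\beta)\mapsto\Phi_l$ jointly continuous, the limit $2\pi i\sum_l\chi_{(\beta_{l,{\rm min}},\beta_{l,{\rm max}})}(\mu)\Phi_l(\mu)$, uniformly for $\mu\in K$; the one-dimensional Plemelj-type facts needed here are Theorem~5 and Remark~6 of Chapter~IV of \cite{kur} (equivalently Theorems~5 and~6 of Section~2 of Chapter~1 of \cite{ya}), already used in Section~\ref{specan}.

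Finally, I would splice the pieces together. In $\bigl([R_0(\mu+i\varepsilon)-R_0(\mu-i\varepsilon)]f_{\mu+i\varepsilon},g_{\mu+i\varepsilon}\bigr)_{\mathcal H}$ I replace $f_{\mu+i\varepsilon}$ by $f_{\mu+i0}$ and $g_{\mu+i\varepsilon}$ by $g_{\mu+i0}$; by the uniform bound of the preceding paragraph and the continuous embedding $\hat{C}_{\alpha,{\rm b}}(I_0, L^2_{\rm o}(S_{1}))\subset\mathcal H$, the error is $\le C_K\bigl(\|f_{\mu+i\varepsilon}-f_{\mu+i0}\|_{\hat{C}_{\alpha,{\rm b}}}+\|g_{\mu+i\varepsilon}-g_{\mu+i0}\|_{\hat{C}_{\alpha,{\rm b}}}\bigr)=O(\varepsilon^{\rho})$ by the second paragraph. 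Passing to the limit with $h=f_{\mu+i0}$, $k=g_{\mu+i0}$, dividing by $2\pi i$, and unwinding $\Phi_l$ reproduces the right-hand side of \eqref{s.68} with the ``$+i0$'' choice; the ``$-i0$'' version comes from the twin identity of the first paragraph and coincides with it, both being $\tfrac1{2\pi i}$ times the same limit. The step I expect to be the main obstacle is the uniform-in-$\varepsilon$ control of the sandwiched $\mathcal A_0$-resolvent difference: keeping the weight $1/|\varphi'|$ under control near $E_0$, where it degenerates for the polytropes with $k>1$, simultaneously with the infinite direct sum over $l$, is exactly why one must work in $\hat{C}_{\alpha,{\rm b}}(I_0, L^2_{\rm o}(S_{1}))$ rather than in $\hat{C}_{\alpha}(I_0, L^2_{\rm o}(S_{1}))$.
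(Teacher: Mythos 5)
Your proposal is correct and follows essentially the same route as the paper: both reduce the spectral-measure density to the free resolvent difference sandwiched between the amplitudes $f_{\mu\pm i\varepsilon}=(\mathcal I-\mathcal B R_0(\mu\pm i\varepsilon))^{-1}f$ and $g_{\mu\pm i\varepsilon}$, change variables from $E$ to $\beta=\beta_l(E)$, and invoke the Privalov/Poisson-kernel limit theorems of \cite{kur} and \cite{ya}. The only differences are organizational --- you reach the sandwiched identity via a symmetric second-resolvent factorization rather than the first resolvent equation combined with \eqref{s.66}, and you handle the $\varepsilon$-dependence of the amplitudes by a separate $O(\varepsilon^{\rho})$ replacement step instead of citing the version of the limit theorem that allows $\varepsilon$-dependent densities --- and both variants are sound.
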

\begin{proof} By the first resolvent equation,
\beq\label{s.69a}
 R_{\mathcal A}(\mu+i\varepsilon)-R_{\mathcal A}(\mu-i\varepsilon)= 2i \varepsilon  R_{\mathcal A}(\mu+i\varepsilon)
 R_{\mathcal A}(\mu-i\varepsilon).
\ene
Then, by  \eqref{2.55bb}, \eqref{2.57}, \eqref{s.66}, \eqref{s.67}, and \eqref{s.69a},
\beq\label{s.69}\begin{array}{l}
\frac{1}{2\pi i}\left( \left[R_{\mathcal A}(\mu+i\varepsilon)- R_{\mathcal A}(\mu- i\varepsilon)\right] f, g \right)_{\mathcal H}= \sum_{l=1}^\infty \int_{I_0} d\lambda   \ds\frac{T(\lambda)}{|\varphi'(\lambda)|}   \ds\frac{\varepsilon}{\pi} 
\frac{1}{ (\frac{(2\pi l)^2}{T^2(\lambda)}-\mu )^2+\varepsilon^2 }  
 \\
 (\mathbf Ff_{\mu\pm i\varepsilon} )_{l}(\lambda)  \overline{(\mathbf F g_{\mu\pm i\varepsilon})_{l}}(\lambda).
\end{array}
\ene
Changing the variable of integration as in \eqref{sp.1}-\eqref{s.43} we obtain,
\beq\label{s.70b}\begin{array}{l}
\frac{1}{2\pi i}\left( \left[R_{\mathcal A}(\mu+i\varepsilon)- R_{\mathcal A}(\mu- i\varepsilon)\right] f, g \right)_{\mathcal H}=  \sum_{l=1}^\infty \int_{\beta_{l,{\rm min}}}^{\beta_{l,{\rm max}}} d\beta   \ds\frac{T(\lambda_l(\beta))}{|\varphi'(\lambda_l(\beta))|}   \ds\frac{\varepsilon}{\pi} 
\frac{1}{ (\beta-\mu )^2+\varepsilon^2 }  p(\lambda_l(\beta)) 
 \\
 (\mathbf Ff(\mu\pm i\varepsilon) )_{l}(\lambda_l(\beta))  \overline{(\mathbf Fg(\mu\pm i\varepsilon))_{
 l}}(\lambda_l(\beta)) \, d\beta_.
\end{array}
\ene
Moreover, by  Theorem~\ref{theo.s2b}, \eqref{s.44}, \eqref{s.46}, \eqref{s.70b},  items (a) and (b) of Proposition~\ref{propder}, and  Theorem 7 in Subsection 3 of Section 2 of Chapter 1 of \cite{ya} ( see also Theorem 5 and Remark 6 in  Chapter  IV of 
\cite{kur}, Theorem 6 in Section 2 of  Chapter 1 of \cite{ya}, and also Section 19 of Chapter 2 of \cite{mu})
\beq\label{s.71}\begin{array}{l}
\lim_{\varepsilon \downarrow 0}\frac{1}{2\pi i}\left( \left[R_{\mathcal A}(\mu+i\varepsilon)- R_{\mathcal A}(\mu- i\varepsilon)\right] f, g \right)_{\mathcal H}= \sum_{l=1}^\infty  \ds \chi_{(\beta_{l, {\rm min}}, \beta_{l, {\rm max}})}(\mu) \ds\frac{T(\lambda(u))}{|\varphi'(\lambda_l(\mu))|}  p_l(\lambda_l(\mu)) 
 \\
 (\mathbf Ff_{\mu\pm i0} )_{l}(\lambda_l(\mu))  \overline{(\mathbf Fg_{\mu\pm i0})_{l}}(\lambda_l(\mu)),
\end{array}
\ene
with the limit uniform for $ \mu$ in compact sets in $\sigma_{\rm ess}(\mathcal A)
\setminus\mathcal M.$ This concludes the proof of the proposition.
\end{proof}
\begin{theorem} \label{theoproj}
\begin{enumerate}
\item[\rm (a)]
The projector $P_{\rm ac}(\mathcal A)$ onto the subspace of absolute continuity of the Antonov operator$\mathcal A $ is given by,
 \beq\label{s.72}
 P_{\rm ac}(\mathcal A)= E_{\mathcal A}(\sigma_{{\rm ess}}(\mathcal A)\setminus \mathcal M),
 \ene
 where $\mathcal M$ is the closed set of measure zero defined in Theorem~\ref{eigen}.
 \item[\rm(b)]
 The absolutely continuous spectrum of $\mathcal A$ is given by,
 \beq\label{s.72b}
 \sigma_{\rm ac}(\mathcal A)= \sigma_{\rm ess}(\mathcal A)=  \cup_{l\in \mathbb N}
\left\{  \frac{(2\pi l)^2}{T(E)^2 }
 : E \in [E_{\rm min}, E_0]\right\}.
\ene
\item[\rm (c)]
The intersection of the singular spectrum of $\mathcal A$ with the absolutely continuous spectrum of $\mathcal A$
is contained in  $\mathcal M,$
\beq\label{2.72c}
\sigma_{\rm sing}(\mathcal A)\cap \sigma_{\rm ac}(\mathcal A) \subset \mathcal M.
\ene
 \end{enumerate}
 \end{theorem}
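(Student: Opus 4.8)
The engine of the argument is the limiting absorption principle of Proposition~\ref{prop.stone}: on the dense subspace $\mathcal D:=\hat{C}_{\alpha,{\rm b}}(I_0,L^2_{\rm o}(S_{1}))$ (it contains $C_0^\infty(I_0;L^2_{\rm o}(S_{1}))$, which is dense in $\mathcal H$), the weighted resolvent differences $\frac{1}{2\pi i}\big([R_{\mathcal A}(\mu+i\varepsilon)-R_{\mathcal A}(\mu-i\varepsilon)]f,g\big)_{\mathcal H}$ converge, uniformly on compact subsets of $S:=\sigma_{\rm ess}(\mathcal A)\setminus\mathcal M$, to the explicit expression on the right-hand side of \eqref{s.68}. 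I would combine this with Stone's formula to show that the restriction to $S$ of the spectral measure of $\mathcal A$ is absolutely continuous, which is the heart of (a); parts (b) and (c) then fall out by soft spectral theory. Two standing facts I would record first: $S$ is open in $\mathbb R$ (every $\mu\in S$ lies in the interior of one of the closed bands making up $\sigma_{\rm ess}(\mathcal A)$, because $\mathcal M$, being closed by Theorem~\ref{eigen}, contains all band endpoints and is closed), and $\overline{S}=\sigma_{\rm ess}(\mathcal A)$ (a closed null set such as $\mathcal M$ has empty interior, so removing it from each band does not change that band's closure).

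For (a), fix $f\in\mathcal D$ and set $\nu_f(\Delta):=\|E_{\mathcal A}(\Delta)f\|_{\mathcal H}^2$. For every bounded open interval $\mathcal O$ with $\overline{\mathcal O}\subset S$, Stone's formula gives $\nu_f(\mathcal O)+\tfrac12\nu_f(\partial\mathcal O)=\lim_{\varepsilon\downarrow 0}\frac{1}{2\pi i}\int_{\mathcal O}\big([R_{\mathcal A}(\mu+i\varepsilon)-R_{\mathcal A}(\mu-i\varepsilon)]f,f\big)_{\mathcal H}\,d\mu$; since by Proposition~\ref{prop.stone} the integrand converges uniformly on the compact set $\overline{\mathcal O}$, it is uniformly bounded there, so bounded convergence yields $\nu_f(\mathcal O)+\tfrac12\nu_f(\partial\mathcal O)=\int_{\mathcal O}\Phi_f\,d\mu$, where $\Phi_f\ge 0$ is the (continuous, hence locally bounded on $S$) limit function, i.e. the right-hand side of \eqref{s.68} with $g=f$. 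Shrinking $\mathcal O$ to a point shows $\nu_f$ has no atoms in $S$, hence $\nu_f(\mathcal O)=\int_{\mathcal O}\Phi_f$ for such $\mathcal O$; exhausting an arbitrary open $U\subseteq S$ by such intervals and using monotone convergence gives $\nu_f(U)=\int_U\Phi_f$, and in particular $\Phi_f\in L^1(S)$ since $\nu_f(S)\le\|f\|_{\mathcal H}^2$. Outer regularity of Lebesgue measure together with absolute continuity of the integral then forces $\nu_f(B)=0$ for every Borel $B\subseteq S$ with $|B|=0$, so $E_{\mathcal A}(S)f\in\mathcal H_{\rm ac}(\mathcal A)$. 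As $\mathcal D$ is dense, $E_{\mathcal A}(S)$ bounded and $\mathcal H_{\rm ac}(\mathcal A)$ closed, we get $E_{\mathcal A}(S)\mathcal H\subseteq\mathcal H_{\rm ac}(\mathcal A)$. For the reverse inclusion, write $E_{\mathcal A}(S^c)=E_{\mathcal A}(S^c\setminus\mathcal M)+E_{\mathcal A}(\mathcal M)$: the first term has range in $\mathcal H_{\rm p}(\mathcal A)$ because $(S^c\setminus\mathcal M)\cap\sigma(\mathcal A)\subseteq\sigma(\mathcal A)\setminus\sigma_{\rm ess}(\mathcal A)=\sigma_{\rm dis}(\mathcal A)$ consists of eigenvalues, while the second has range orthogonal to $\mathcal H_{\rm ac}(\mathcal A)$ because $|\mathcal M|=0$ by Theorem~\ref{eigen} and absolutely continuous measures annihilate null sets; hence $E_{\mathcal A}(S^c)\mathcal H\subseteq\mathcal H_{\rm ac}(\mathcal A)^{\perp}$. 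Since $E_{\mathcal A}(S)+E_{\mathcal A}(S^c)=I$, this gives $E_{\mathcal A}(S)=P_{\rm ac}(\mathcal A)$, which is \eqref{s.72}.

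Parts (b) and (c) are then immediate consequences of (a). For (b), $\mathcal A_{\rm ac}=\mathcal A|_{\mathcal H_{\rm ac}(\mathcal A)}=\mathcal A|_{E_{\mathcal A}(S)\mathcal H}$ has spectral measure $\Delta\mapsto E_{\mathcal A}(\Delta\cap S)$ restricted to $E_{\mathcal A}(S)\mathcal H$, which is supported in $\overline{S}=\sigma_{\rm ess}(\mathcal A)$, so $\sigma_{\rm ac}(\mathcal A)\subseteq\sigma_{\rm ess}(\mathcal A)$; conversely, for $\mu_0\in S$ pick $\delta>0$ with $(\mu_0-\delta,\mu_0+\delta)\subseteq S$, whence $E_{\mathcal A}((\mu_0-\delta,\mu_0+\delta))P_{\rm ac}(\mathcal A)=E_{\mathcal A}((\mu_0-\delta,\mu_0+\delta))\neq 0$ because $\mu_0\in\sigma_{\rm ess}(\mathcal A)\subseteq\sigma(\mathcal A)$, so $\mu_0\in\sigma_{\rm ac}(\mathcal A)$, and taking closures $\sigma_{\rm ess}(\mathcal A)=\overline{S}\subseteq\sigma_{\rm ac}(\mathcal A)$; with \eqref{2.81c} this yields \eqref{s.72b}. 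For (c), if $\mu_0\in S$ then, with $\delta$ as above, $E_{\mathcal A}((\mu_0-\delta,\mu_0+\delta))\mathcal H\subseteq\mathcal H_{\rm ac}(\mathcal A)$ by (a), so $(\mu_0-\delta,\mu_0+\delta)$ contains no eigenvalue of $\mathcal A$ and does not meet $\sigma_{\rm sc}(\mathcal A)$; therefore $\mu_0\notin\overline{\Sigma_{\rm p}(\mathcal A)}\cup\sigma_{\rm sc}(\mathcal A)=\sigma_{\rm sing}(\mathcal A)$. Contrapositively, $\sigma_{\rm sing}(\mathcal A)\cap\sigma_{\rm ac}(\mathcal A)\subseteq\mathcal M$, which is \eqref{2.72c}.

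The main obstacle is the first half of (a): converting the pointwise, uniform‑on‑compacts boundary‑value statement of Proposition~\ref{prop.stone} into genuine absolute continuity of the spectral measure on $S$. This requires justifying the interchange of $\lim_{\varepsilon\downarrow 0}$ with the $\mu$‑integral in Stone's formula (legitimate precisely because the convergence is uniform on compacts, supplying a uniform bound on the integrand), checking that the limiting density $\Phi_f$ is nonnegative and locally integrable, and the measure‑theoretic bookkeeping that upgrades "$\nu_f=\Phi_f\,d\mu$ on intervals compactly contained in $S$" to "$\nu_f\ll$ Lebesgue on the open set $S$"; a routine density argument then transfers the conclusion from $\mathcal D$ to all of $\mathcal H$. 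Everything after that — the reverse inclusion in (a) and parts (b), (c) — is standard spectral‑theoretic manipulation.
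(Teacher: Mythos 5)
Your proposal is correct and follows essentially the same route as the paper: Proposition~\ref{prop.stone} plus Stone's formula to show $E_{\mathcal A}(\Delta)f\in\mathcal H_{\rm ac}(\mathcal A)$ for $f$ in the dense class $\hat{C}_{\alpha,{\rm b}}(I_0,L^2_{\rm o}(S_{1}))$, then density and closedness of $\mathcal H_{\rm ac}(\mathcal A)$, the null-set argument for $\mathcal M$, and the density of $\sigma_{\rm ess}(\mathcal A)\setminus\mathcal M$ in $\sigma_{\rm ess}(\mathcal A)$ for parts (b) and (c). You supply somewhat more of the measure-theoretic bookkeeping (atoms, extension from intervals to Borel sets, outer regularity) that the paper leaves implicit, and you derive the absence of embedded eigenvalues in $S$ directly from (a) rather than citing Theorem~\ref{eigen}(b), but these are refinements of the same argument.
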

 \begin{proof}
 Let $\alpha$ be as in Theorem~\ref{theo.s2b},
 and let $f, g \in {\hat{C}_{\alpha, {\rm b}}(I_0, L^2_{\rm o}(S_{1}))}. $  Recall that  ${\hat{C}_{\alpha, {\rm b}}(I_0, L^2_{\rm o}(S_{1}))}$ was defined in \eqref{s.83b}. Then, by Stone's formula (see Theorem VII.13, and the comment in page 264 of \cite{rs1})  and Theorem~\ref{eigen}, for any open interval   $\Delta:=
 (a,b) \subset \sigma_{\rm ess}(\mathcal A) \setminus \mathcal M, $ 
 
 \beq\label{s.73}
 \left( E_{\mathcal A}(\Delta) f,g \right)_{\mathcal H}= \lim\_{\varepsilon \downarrow 0} \frac{1}{2\pi i} \int_{\Delta} \, d\mu \left( \left[R_{\mathcal A}(\mu+i\varepsilon)- R_{\mathcal A}(\mu- i\varepsilon)\right] f, g \right)_{\mathcal H}.
 \ene
Introducing the uniform limit \eqref{s.68} into  the integral in \eqref{s.73}  we get,
 \beq\label{s.74}\begin{array}{l}
  \left( E_{\mathcal A}(\Delta) f,g \right)_{\mathcal H}=  \int_{\Delta} \, d\mu \sum_{l=1}^\infty  \ds \chi_{(\beta_{l, {\rm min}}, \beta_{l, {\rm max}})}(\mu) \ds\frac{T(\lambda_l(\mu))}{|\varphi'(\lambda_l(\mu))|}  p_l(\lambda_l(\mu)) 
 \\
 (\mathbf Ff_{\mu\pm i0} )_{l}(\lambda_l(\mu))  \overline{(\mathbf Fg_{\mu\pm i 0})_{l}}(\lambda_l(\mu)).
\end{array}
\ene
 As the measure in the right-hand side of \eqref{s.74} is absolutely continuous, we have $ E_{\mathcal A}(\Delta) f \subset \mathcal H_{\rm ac}(\mathcal A)$ for $f \in  {\hat{C}_{\alpha, {\rm b}}(I_0, L^2_{\rm o}(S_{1}))}. $  Since  $ {\hat{C}_{\alpha,b}(I_0, L^2_{\rm o}(S_{1}))}$ is dense in $\mathcal H$ and as  $\mathcal H_{\rm ac}(\mathcal A)$ is closed, we get
 \beq\label{s.75}
 E_{\mathcal A}(\Delta) \mathcal H \subset \mathcal H_{\rm ac}(\mathcal A),
 \ene 
 for any open interval  $\Delta:=
 (a,b) \subset \sigma_{\rm ess}(\mathcal A) \setminus \mathcal M. $ But then, \eqref{s.75} follows for any Borel set 
$\Delta \subset \sigma_{\rm ess}(\mathcal A) \setminus \mathcal M.$ Moreover, as $\mathcal M$ is a closed set of measure zero it can not support absolutely continuous spectrum, and then, by \eqref{s.75} equation \eqref{s.72} holds. Let us now prove \eqref{s.72b}. We already know that $\sigma_{\rm ess}(\mathcal A)\setminus \mathcal M \subset \sigma_{\rm ac}(\mathcal A).$ Consider $ \beta \in \sigma_{\rm ess}(\mathcal A) \cap \mathcal M.$ Suppose that $ \beta$ is not a limit point of $\sigma_{\rm ess}(\mathcal A)\setminus \mathcal M .$ Then, there  would be a neighborhood 
of $\beta$ that contains no point of $\sigma_{\rm ess}(\mathcal A)\setminus \mathcal M.$ This would imply that there is a an open set of positive measure contained in $\sigma_{\rm ess}(\mathcal A)\cap \mathcal M.$ However, this is
not possible since $\mathcal M$ has measure zero. Hence, every point in $\sigma_{\rm ess}(\mathcal A) \cap \mathcal M$ is a limit point of    $\sigma_{\rm ess}(\mathcal A)\setminus \mathcal M \subset \sigma_{\rm ac}(\mathcal A),$ and as the absolutely continuous spectrum is closed, $\sigma_{\rm ess}(\mathcal A)\cap \mathcal M \subset  \sigma_{\rm ac}(\mathcal A).$
Finally, as the complement in the spectrum of   $\sigma_{\rm ess}(\mathcal A)$ is discrete, \eqref{s.72b} follows. This proves (b). Let us prove (c). By \eqref{s.59b} 
$\Sigma_p(\mathcal A) \cap \Gamma \subset \mathcal N.$ Then, $\overline{\Sigma_p(\mathcal A)} \cap \sigma_{\rm ac}(A)\subset \mathcal M.$ 
By \eqref{s.72} $\mathcal H_{\rm sc}(A) \subset E_{\mathcal A}(\mathcal M),$ and then, 
$\sigma_{\rm sc}(A) \subset \mathcal M.$ This completes the proof of (c) and of the theorem
 \end{proof}
\section{The generalized Fourier maps}
\label{fouma}
In this section we construct the generalized Fourier maps of the Antonov operator.
We define first the generalized Fourier maps  $\mathcal F_\pm$    for finite linear combinations of the type,
\beq\label{s.75b}
f= \sum_{j=1}^N E_{\mathcal A}(O_j) f_j,
\ene
 where  $f_j \in {\hat{C}_{\alpha, {\rm b}}(I_0, L^2_{\rm o}(S_{1}))} $  and the $O_j$ are  disjoint intervals, $O_j \cap O_k=\emptyset $ for $j \neq k,$  with $O_j  \subset \sigma_{\rm ac}(\mathcal A)\setminus \mathcal M,$ for $j=1,\dots, N.$  The set of all the functions as in \eqref{s.75b} is dense in $\mathcal H _{\rm ac}(\mathcal A).$  Further, we take   $\alpha$  as in Proposition~\ref{prop2.3}. For $f$ as in \eqref{s.75b} we define,

\beq\label{s.75c}\begin{array}{l}
\left( \mathcal F_\pm f  \right)_l(\mu):= \sum_{j=1}^N  \chi_{O_j}(\mu) \ds\sqrt{\frac{T(\lambda_l(\mu)) |p(\lambda_l(\mu))|}{|\varphi'(\lambda_l(\mu )|}  } \ds \chi_{(\beta_{l, {\rm min}}, \beta_{l, {\rm max}})}(\mu) \\
\left( \mathbf F\left(\mathcal I-  (\mathcal B R_0)_\pm(\mu))^{-1} f_j\right)(\lambda_l(\mu)) \right)_{l}.
\end{array}
\ene
Note that, by \eqref{s.75c}, for any interval $\Delta  \subset \sigma_{\rm ac}(\mathcal A)\setminus \mathcal M ,$ and any $f$ as in \eqref{s.75b},
\beq\label{s.75d}
\chi_\Delta (\mu)( \mathcal F_\pm f)(\mu)= (\mathcal F_\pm E_{\mathcal A}(\Delta) f)(\mu).
\ene
Then, for every $f,g \in {\hat{C}_{\alpha, {\rm b}}(I_0, L^2_{\rm o}(S_{1}))} $  we can write \eqref{s.74} as
\beq\label{s.76}
\left( E(\Delta) f,g \right)_{\mathcal H}= \left(\chi_{\Delta} \mathcal F_\pm f, \mathcal F_\pm g\right)_{\mathcal H_{\rm sp}},
\ene
with $\mathcal H_{\rm sp}$ defined in \eqref{sp.4}.
As \eqref{s.76} holds for every open interval, it also holds for every Borel   set $ O \ \in \sigma_{\rm ac}(\mathcal A)\setminus \mathcal M$, i.e.
\beq\label{s.77}
\left( E(O) f,g \right)_{\mathcal H}= \left(\chi_{O} \mathcal F_\pm f, \mathcal F_\pm g\right)_{\mathcal H_{\rm sp}}.
\ene
 Further, \eqref{s.77} extends by linearity to all functions of the form \eqref{s.75b}.
In  particular,
 \beq\label{s.78}
\
\left( P_{\rm ac}(\mathcal A) f,g \right)_{\mathcal H}= = \left(\chi_{(\sigma_{\rm ac}(\mathcal A)\setminus \mathcal M)} \mathcal F_\pm f, \mathcal F_\pm g\right)_{\mathcal H_{\rm sp}}=
 \left( \mathcal F_\pm f, \mathcal F_\pm g\right)_{\mathcal H_{\rm sp}}.
\ene
Note that \eqref{s.78} implies that the operators $\mathcal F_\pm$ are densely defined and bounded from 
$\mathcal H_{\rm ac}(\mathcal A)$ into $\mathcal H_{\rm sp}.$ We extend then by continuity to bounded operators from $\mathcal H_{\rm ac}(A)$ into
$\mathcal H_{\rm sp}$ and we denote the extensions by the same symbol  $\mathcal F_\pm.$ Further, we define $\mathcal F_\pm$ by zero on $\mathcal H_{\rm s}(\mathcal A),$
\beq\label{s.79}
\mathcal F_\pm f=0, \qquad f \in \mathcal H_{\rm s}(\mathcal A).
\ene
Then, by \eqref{s.78} the generalized Fourier maps $\mathcal F_\pm$ are partially isometric from $\mathcal H$ into
$\mathcal H_{\rm sp}$ with initial subspace $\mathcal H_{\rm ac}(\mathcal A).$  In the theorem below we prove that they are actually onto $\mathcal H_{\rm sp}.$ Our spectral theorem in terms of {\it trace maps}, that we obtained at the end of Subsection~\ref{subspec}, plays a key role for this purpose
\begin{theorem} \label{propgf}
The generalized Fourier maps $\mathcal F_\pm$ are partially isometric from $\mathcal H$ onto $\mathcal H_{\rm sp}.$ Furthermore, the  initial subspace of  $\mathcal F_\pm$  is $\mathcal H_{\rm ac}(\mathcal A)$ and the final subspace is $\mathcal H_{\rm sp},$
\beq\label{s.80}
\mathcal F_\pm^\ast \mathcal F_\pm= P_{\rm ac}(\mathcal A),\qquad  \mathcal F_\pm \mathcal F_\pm^\ast =\mathcal I.
\ene
Moreover, for any  Borel function $\phi,$
\beq\label{s.81}
\phi(\mathcal A_{\rm ac}) = \mathcal F_\pm ^\ast \, \phi(\mu) \, \mathcal F_\pm,
\ene
where $\mathcal A_{\rm ac}$ is the absolutely continuous part of $\mathcal A,$ and by $\phi(\mu)$ we denote the operator of multiplication by the function  $\phi(\mu),$ that is to say,
\beq\label{s.82}
\phi(\mu) \{f_l\}=  \{  \varphi(\mu) f_l(\mu)\}.
\ene
In particular,
\beq\label{s.82b}
\mathcal A_{\rm ac}= \mathcal F_\pm ^\ast \,\mu \, \mathcal F_\pm.
\ene
\end{theorem}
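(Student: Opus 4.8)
The first identity in \eqref{s.80}, $\mathcal F_\pm^\ast\mathcal F_\pm=P_{\rm ac}(\mathcal A)$, is already in hand: it is precisely \eqref{s.78} after the extension by continuity made there, and it says that $\mathcal F_\pm$ is a partial isometry from $\mathcal H$ into $\mathcal H_{\rm sp}$ with initial subspace $\mathcal H_{\rm ac}(\mathcal A)$. So only two things remain: that $\mathcal F_\pm$ is \emph{onto} $\mathcal H_{\rm sp}$, equivalently $\mathcal F_\pm\mathcal F_\pm^\ast=\mathcal I$, and the diagonalization \eqref{s.81}, \eqref{s.82b}.

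The diagonalization is the routine half, and I would get it from the intertwining contained in \eqref{s.75d}. That relation gives $\chi_\Delta\,\mathcal F_\pm=\mathcal F_\pm\,E_{\mathcal A}(\Delta)$ on functions of the form \eqref{s.75b}, hence on all of $\mathcal H_{\rm ac}(\mathcal A)$ by density, and in fact on all of $\mathcal H$, since for an interval $\Delta\subset\sigma_{\rm ac}(\mathcal A)\setminus\mathcal M$ both $\mathcal F_\pm$ and $E_{\mathcal A}(\Delta)$ annihilate $\mathcal H_{\rm s}(\mathcal A)$ (using \eqref{s.79}, Theorem~\ref{theoproj}(c) and Theorem~\ref{eigen}(b)). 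Since $\mathcal M$ has measure zero, the multiplication operator $\chi_\Delta(\mu)$ on $\mathcal H_{\rm sp}$ equals $\chi_{\Delta\cap(\sigma_{\rm ac}(\mathcal A)\setminus\mathcal M)}(\mu)$, so a routine measure-theoretic extension gives $\chi_\Delta(\mu)\,\mathcal F_\pm=\mathcal F_\pm\,E_{\mathcal A}\big(\Delta\cap(\sigma_{\rm ac}(\mathcal A)\setminus\mathcal M)\big)$ for every Borel $\Delta\subset\mathbb R$. Composing on the left with $\mathcal F_\pm^\ast$ and using $\mathcal F_\pm^\ast\mathcal F_\pm=P_{\rm ac}(\mathcal A)$ together with \eqref{s.72} yields $\mathcal F_\pm^\ast\,\chi_\Delta(\mu)\,\mathcal F_\pm=E_{\mathcal A_{\rm ac}}(\Delta)$; \eqref{s.81} then follows for simple functions and, by the usual approximation, for all Borel $\phi$, with \eqref{s.82b} being the case $\phi(\mu)=\mu$. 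No surjectivity is needed here.

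The real content is surjectivity. As $\mathcal F_\pm$ is a partial isometry its range is closed, so it suffices to prove that the range is dense in $\mathcal H_{\rm sp}$, and the plan is to compare $\mathcal F_\pm$ with the unitary $\mathcal F$ of Subsection~\ref{subspec} that diagonalizes $\mathcal A_0$. Comparing \eqref{sp.5}, \eqref{sp.5b} with \eqref{s.75c} shows that, fiberwise over the spectral variable $\mu$, $\mathcal F_\pm$ is $\mathcal F$ precomposed with $(\mathcal I-(\mathcal B R_0)_\pm(\mu))^{-1}$; precisely, for $\tilde h\in\hat{C}_{\alpha,{\rm b}}(I_0,L^2_{\rm o}(S_{1}))$ and $O$ a bounded closed interval contained in the open set $\sigma_{\rm ac}(\mathcal A)\setminus\mathcal M$, the function $f:=E_{\mathcal A}(O)\tilde h$, which is of the form \eqref{s.75b}, satisfies
\[
(\mathcal F_\pm f)(\mu)=\chi_O(\mu)\,\big(\mathcal F\,(\mathcal I-(\mathcal B R_0)_\pm(\mu))^{-1}\tilde h\big)(\mu),
\]
the value at $\mu$ of $\mathcal F$ on an element of $\hat{C}_{\alpha,{\rm b}}(I_0,L^2_{\rm o}(S_{1}))$ being well defined pointwise because such elements are H\"older continuous in $E$. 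Now fix $O$, let $\mu_0$ be its midpoint, take $h\in\hat{C}_{\alpha,{\rm b}}(I_0,L^2_{\rm o}(S_{1}))$ and $\tilde h:=(\mathcal I-(\mathcal B R_0)_\pm(\mu_0))h$, which again lies in $\hat{C}_{\alpha,{\rm b}}(I_0,L^2_{\rm o}(S_{1}))$. Using the identity
\[
(\mathcal I-(\mathcal B R_0)_\pm(\mu))^{-1}(\mathcal I-(\mathcal B R_0)_\pm(\mu_0))=\mathcal I+(\mathcal I-(\mathcal B R_0)_\pm(\mu))^{-1}\big((\mathcal B R_0)_\pm(\mu)-(\mathcal B R_0)_\pm(\mu_0)\big),
\]
the H\"older continuity of $\mu\mapsto(\mathcal B R_0)_\pm(\mu)$ from Theorem~\ref{theo.s2b}(b), the uniform boundedness on compacts of $(\mathcal I-(\mathcal B R_0)_\pm(\mu))^{-1}$ on $\sigma_{\rm ac}(\mathcal A)\setminus\mathcal M$ (from Theorem~\ref{eigen}, norm continuity of $\mu\mapsto(\mathcal B R_0)_\pm(\mu)$, and stability of bounded invertibility), and a uniform-on-compacts bound for the fibers of $\mathcal F$, one obtains an estimate
\[
\big\|\mathcal F_\pm f-\chi_O\,\mathcal F h\big\|_{\mathcal H_{\rm sp}}\le C_K\,|O|^{\frac12+\alpha'}\,\|h\|_{\hat{C}_{\alpha,{\rm b}}(I_0,L^2_{\rm o}(S_{1}))}
\]
for some $\alpha'>0$ and some $C_K$ uniform for $O$ inside a fixed compact $K\subset\sigma_{\rm ac}(\mathcal A)\setminus\mathcal M$. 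Thus $\chi_O\,\mathcal F h$ lies in the closure of the range of $\mathcal F_\pm$ for every short $O$; partitioning an arbitrary bounded closed subinterval of $\sigma_{\rm ac}(\mathcal A)\setminus\mathcal M$ into short pieces and adding, the same holds for $\chi_{O'}\mathcal F h$ with $O'$ any finite union of such intervals, and then, letting $O'$ exhaust $\sigma_{\rm ac}(\mathcal A)\setminus\mathcal M$, for $\mathcal F h$ itself. Since $\hat{C}_{\alpha,{\rm b}}(I_0,L^2_{\rm o}(S_{1}))$ is dense in $\mathcal H$ and $\mathcal F$ is unitary, the vectors $\mathcal F h$ are dense in $\mathcal H_{\rm sp}$, so the range of $\mathcal F_\pm$ is dense and, being closed, equals $\mathcal H_{\rm sp}$; this is $\mathcal F_\pm\mathcal F_\pm^\ast=\mathcal I$.

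The step that I expect to require the most care is assembling these uniform bounds with the correct H\"older exponents: that the fibers of $\mathcal F$ are bounded from $\hat{C}_{\alpha,{\rm b}}(I_0,L^2_{\rm o}(S_{1}))$ into $\mathbb C_{m_n}$ uniformly on compact subsets of $\sigma_{\rm ac}(\mathcal A)\setminus\mathcal M$ (using that only finitely many $l$ contribute at each $\mu$ and that the weights $\sqrt{T(\lambda_l(\mu))\,p_l(\lambda_l(\mu))/|\varphi'(\lambda_l(\mu))|}$ stay bounded away from the endpoints, which are collected in $\mathcal M$), and that, after choosing the two H\"older exponents in Theorem~\ref{theo.s2b} equal so that $(\mathcal B R_0)_\pm(\mu)$ maps $\hat{C}_{\alpha,{\rm b}}(I_0,L^2_{\rm o}(S_{1}))$ into itself, $\mu\mapsto(\mathcal B R_0)_\pm(\mu)$ is locally H\"older continuous as a $B(\hat{C}_{\alpha,{\rm b}}(I_0,L^2_{\rm o}(S_{1})))$-valued function. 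Both follow from Theorems~\ref{theo.s2} and~\ref{theo.s2b} and the mapping properties of $|\varphi'|\,v$ recorded there, but they have to be put together carefully.
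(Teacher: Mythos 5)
Your identification of \eqref{s.78} with the first half of \eqref{s.80}, and your derivation of \eqref{s.81}--\eqref{s.82b} from the intertwining \eqref{s.75d} plus $\mathcal F_\pm^\ast\mathcal F_\pm=P_{\rm ac}(\mathcal A)$ and \eqref{s.72}, are fine and essentially equivalent to what the paper does (the paper routes the diagonalization through Stone's formula and \eqref{s.86b}, but the content is the same). For the surjectivity, however, you take a genuinely different path. The paper works fiberwise: starting from \eqref{s.83} it shows that for almost every $\beta$ the map $\mathcal L_n(\beta)\mathbf F(\mathcal I-(\mathcal B R_0)_\pm(\beta))^{-1}$ carries a dense subspace (a Sobolev space $H_{\rm b}\subset\hat C_{\alpha,{\rm b}}$) onto all of $\mathbb C_{m_n}$, the key technical point being the strong measurability of the projector onto the orthogonal complement of the range, proved via Stone's formula for $D(\beta)D(\beta)^\ast$; a vector orthogonal to the range of $\mathcal F_\pm$ is then killed fiber by fiber. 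Your argument instead freezes the perturbation at the midpoint of a short interval and uses the H\"older continuity of $\mu\mapsto(\mathcal B R_0)_\pm(\mu)$ from Theorem~\ref{theo.s2b} to show that $\chi_O\mathcal F h$ is well approximated by $\mathcal F_\pm E_{\mathcal A}(O)(\mathcal I-(\mathcal B R_0)_\pm(\mu_0))h$. This avoids the measurable-selection argument entirely and is a legitimate alternative; the supporting uniform bounds you list (boundedness of the fibers of $\mathcal F$ on compacts away from $\mathcal M$, uniform invertibility of $\mathcal I-(\mathcal B R_0)_\pm(\mu)$ on compacts of $\sigma_{\rm ac}(\mathcal A)\setminus\mathcal M$, equal H\"older exponents in Theorem~\ref{theo.s2b}) are all available.

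There is one step that does not close as written: the passage from single short intervals to a finite union by ``adding.'' The available H\"older exponent satisfies $\alpha'<\alpha_1(1-4\alpha_2)<1/4$, so if you partition a compact interval $K$ into $N$ pieces of length $|K|/N$ and sum the per-piece errors by the triangle inequality you get $N\cdot C(|K|/N)^{1/2+\alpha'}=C|K|^{1/2+\alpha'}N^{1/2-\alpha'}$, which \emph{diverges} as $N\to\infty$. The argument is rescued by an observation you omit: by \eqref{s.75c}/\eqref{s.75d} each error $\mathcal F_\pm E_{\mathcal A}(O_i)\tilde h_i-\chi_{O_i}\mathcal F h$ is supported in $O_i$ as a function of $\mu$, so the errors over disjoint $O_i$ are mutually orthogonal in $\mathcal H_{\rm sp}$ and add in quadrature,
$$
\Bigl\|\sum_{i=1}^N\bigl(\mathcal F_\pm E_{\mathcal A}(O_i)\tilde h_i-\chi_{O_i}\mathcal F h\bigr)\Bigr\|_{\mathcal H_{\rm sp}}^2\le N\,C^2\bigl(|K|/N\bigr)^{1+2\alpha'}=C^2|K|^{1+2\alpha'}N^{-2\alpha'}\to 0 ,
$$
which works for any $\alpha'>0$. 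With this one line inserted, your surjectivity proof is complete; without it, the summation step fails for the exponents actually available in this paper. (A minor bookkeeping point: the short intervals must each lie inside a single multiplicity interval $\Delta_n$ for the fiber formula with $\mathcal L_n$ to apply, but since the endpoints of the $\Delta_n$ belong to $\mathcal M$ this is automatic for a compact subset of $\sigma_{\rm ac}(\mathcal A)\setminus\mathcal M$ after a finite subdivision.)
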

\begin{proof}
We already know that the generalized Fourier maps $\mathcal F_\pm$ are partially isometric from $\mathcal H$ into
$\mathcal H_{\rm sp}$ with initial subspace $\mathcal H_{\rm ac}(\mathcal A).$ It remains to prove that they are onto 
$\mathcal H_{\rm sp}.$  For this purpose, note that by \eqref{sp.19},  \eqref{sp.20}, and \eqref{s.75c} for any interval  $\tilde{\Delta} \subset \sigma_{\rm ess}(\mathcal A) \setminus \mathcal M $ contained in the interior of $\Delta_n,$ and  any $f \in {\hat{C}_{\alpha, {\rm b}}(I_0, L^2_{\rm o}(S_{1}))} $ with $\alpha$ as in Proposition~ \ref{prop2.3},
\beq\label{s.83}
\left(\mathcal F_\pm E_{\mathcal A }(\tilde{\Delta}) f\right)(\beta)= \chi_{\tilde{\Delta}}(\beta) \mathcal L_{n}(\beta) \mathbf F(\mathcal I-(\mathcal B R_0)_\pm(\beta))^{-1}f.
\ene
 Recall that  ${\hat{C}_{\alpha, {\rm b}}(I_0, L^2_{\rm o}(S_{1}))}$ was defined in \eqref{s.83b}
Let us prove that for almost every $ \beta $ in $\tilde{\Delta}$ 
\beq\label{s.83c}
 \mathcal L_{n}(\beta) \mathbf F(\mathcal I -(\mathcal B R_0)_\pm(\beta))^{-1}  {\hat{C}_{\alpha,{\rm b}}(I_0, L^2_{\rm o}(S_{1}))} =\mathbb C_{m_n}.
 \ene
   Since   $\varphi'(E_{l_{m,j}}(\beta)), T(E_{l_{m,j}}(\beta),$ and $p_{l_{m,j}}(\beta))$  are different from zero  for $ \beta  > \beta_{\rm min}$ and $j=1,\dots, m_{n},$  by \eqref{sp.20}  it is enough to prove that
  \beq\label{s.83d}
 \mathcal L_{n,{\rm r}}(\beta) \mathbf F(\mathcal I -(\mathcal B R_0)_\pm(\beta))^{-1}  {\hat{C}_{\alpha, {\rm b}} (I_0, L^2_{\rm o}(S_{1}))} =\mathbb C_{m_n},
 \ene
  for almost every $ \beta$ in $\tilde{\Delta},$ where  $\mathcal L_{n,{\rm r}}(\beta)$ is the operator  from $\mathcal H_{{\rm sp}, m_n}$  into $\mathbb C_{m_n}$  given by, 

   \beq\label{s.83e}
\begin{array}{r}
\mathcal L_{n, {\rm r}}(\beta) \{f_l\}:=  \left( f_{l_{n,1}}(\beta), 
\dots,  f_{l_{n,m_n}}(\beta) \right),\qquad  \{f_l\} \in \mathcal H_{{\rm sp}, m_n}. 
\end{array}
\ene
 Moreover, as $(\mathcal I -(\mathcal B R_0)_\pm(\beta))^{-1}$ is a bijection on   ${\hat{C}_{\alpha, {\rm b}} (I_0, L^2_{\rm o}(S_{1}))}$  (see the proof below \eqref{s.83b}) it is enough to prove that,
\beq\label{s.83ez}
 \mathcal L_{n,r}(\beta) \mathbf F {\hat{C}_{\alpha, {\rm b}}(I_0, L^2_{\rm o}(S_{1}))} =\mathbb C_{m_n}
 \ene
for almost every $ \beta$ in $\tilde{\Delta}.$ For this purpose we introduce the following Sobolev space. For $ m=1,\dots, $ let $H_{m.0}(I_0, L^2_{\rm o}(S_{1}))$ be the Sobolev space that is the completion of 
$C^\infty_0(I_0, L^2_{\rm o}(S_{1}))$    in the norm,
$$
\| f\|_{\ds H_{m, 0}(I_0, L^2_{\rm o}(S_{1}))}:=\left[ \sum_{j=0}^m\left \| \frac{\partial^j}{\partial E^j} f\right\|^2_{\ds L^2(I_0, L^2_{\rm o}(S_{1}))}\right]^{1/2}.
$$
For the polytropes we denote $ H_{\rm b}(I_0, L^2_{\rm o}(S_{1})):= H_{m, 0}(I_0, L^2_{\rm o}(S_{1}))$ 
  where $m$ is the smallest integer $m \geq k.$ Further, we designate  $ H_{b}(I_0, L^2_{\rm o}(S_{1})):=  H_{1,0}(I_0, L^2(_{1,{\rm o}}))$
for the  King steady states. Note that in all the cases  $ H_{\rm b}(I_0, L^2_{\rm o}(S_{1})) \subset  \hat{C}_{\alpha, {\rm b}}(I_0, L^2_{\rm o}(S_{1})).$ It is enough to prove,
\beq\label{s.85}
Q(\beta):= \mathcal L_{n,r}(\beta) \mathbf F H_{\rm b}(I_0, L^2_{\rm o}(S_{1})) =\mathbb C_{m_n}
 \ene
 for almost every $ \beta$ in $\tilde{\Delta}.$ For this purpose, let $P(\beta)$ be the projector onto $Q(\beta).$ Let $e_j, j=1,\dots m_n,$ be  the canonical basis of $\mathbb C_{m_n}.$ Assume that   \eqref{s.85} does not hold for almost every $\beta$  in $\tilde{\Delta}.$ Then, there has to be  a $j$ such that
  $$
  g_j(\beta):= (\mathcal I-P(\beta))e_j,
   $$
is different from zero in a set of positive measure $ O \subset \tilde{\Delta}.$ Let us prove that $g_j$ is a measurable function. For this purpose, we prove that $\mathcal I-P(\beta)$ is strongly measurable. Note that $ D_n(\beta):= \mathcal L_{n,r}(\beta) \mathbf F$ is bounded from $ H_{\rm b}(I_0, L^2_{\rm o}(S_{1}))$ into $\mathbb C_{m_n},$ and continuous in operator norm.
Remark  that  $\mathcal I-P(\beta)$ is the projector onto the kernel of $D(\beta)^\ast$ as an operator from $\mathbb C_{m_n}$ into $ H_{\rm b}(I_0, L^2_{\rm o}(S_{1})).$ Let us denote 
$$
K(\beta):= D(\beta) D(\beta)^\ast.
$$
Let $E_{K(\beta)}( \lambda)$ be the spectral family of $K(\beta)$ as a selfadjoint operator in  $ \mathbb C_{m_n}$ and let 
$R_{K(\beta)}(z)= ( K(\beta)-z )^{-1} $ be the resolvent of $K(\beta).$ Then, by Stone's formula  (see Theorem VII.13, and the comment in page 264 of \cite{rs1})  
$$
(\mathcal I-P(\beta)) f = E_{K(\beta)}(\{0\})f=  \lim_{\varepsilon \downarrow 0} \frac{1}{\pi i} \int_{-1}^0 [R_{K(\beta)}(\lambda+i\varepsilon)-
R_{K(\beta)}(\lambda-i\varepsilon)]f d\lambda, \qquad f \in \mathbb C_{m_n}.
 $$ 
It follows that  $\mathcal I-P(\beta)$ is  strongly measurable. Hence,  $g_j$ is measurable, and then, $g_j \in L^2(\tilde{\Delta}, \mathbb C_{m_n}).$ Moreover, as $g_j(\beta)$ is orthogonal to $Q(\beta)$
$$
\left( g_j, \mathcal L_{n,r}(\beta) \mathbf F f\right)_{L^2(\tilde{\Delta}, \mathbb C_{n_m})}=0, \qquad f \in  H_{\rm b}(I_0, L^2_{\rm o}(S_{1})).
$$
Further, as  the set $ \mathcal L_{n,r}(\beta) \mathbf F f,$ with $f \in  H_{\rm b}(I_0, L^2_{\rm o}(S_{1}))
$ is dense in $L^2(\tilde{\Delta}, \mathbb C_{n_m}),$ the function $g_j(\beta)$ is zero for almost every $ \beta \in \tilde{\Delta}.$ This contradiction proves that 
\eqref{s.85} holds, and then,

\beq\label{s.85b}
 \mathcal L_{n}(\beta) \mathbf F(\mathcal I -(\mathcal B R_0)_\pm(\beta))^{-1}   H_{\rm b}(I_0, L^2_{\rm o}(S_{1}))=\mathbb C_{m_n}.
 \ene
 Suppose now that some $ h \in \mathcal H_{\rm sp}$ is orthogonal to the range  of $\mathcal F_\pm.$ Let  $\{f_l\}_{l=1}^\infty$ be an orthonormal basis in $ H_{\rm b}(I_0, L^2_{\rm o}(S_{1})).$ Then, for every  interval  $\tilde{\Delta}
 \subset \sigma_{\rm ac}(\mathcal A)$ contained in the interior of $\Delta_n,$ 
\beq\label{s.86}
\left( \mathcal F_\pm E_{\mathcal A}(\tilde{\Delta}) f_l,h \right)_{\mathcal H_{\rm sp}}= \int_{\tilde{\Delta}} \left( \mathcal L_{n}(\beta) \mathbf F(\mathcal I -(\mathcal B R_0)_\pm(\beta))^{-1}  f_l,  h\right)_{\mathbb C_{n_m}}=0.
\ene
Hence,
$$\left(  \mathcal L_{n}(\beta) \mathbf F(\mathcal I -(\mathcal B R_0)_\pm(\beta))^{-1}   f_l,  h\right)_{\mathbb C_{n_m}}=0, \qquad  \text{\rm a.e} \, \beta \in \Delta_n,   l=1,\dots
$$
and by \eqref{s.85}
$h(\beta)=0,$  for almost every $\beta \in \Delta_n$. As this holds for every $\Delta_n$ we get that $h=0.$ 
This completes the proof that the $\mathcal F_\pm$ are onto $\mathcal H_{\rm sp}$ and that \eqref{s.80} holds. 
Furthermore, by \eqref{s.71},
\eqref{s.75c},  Stone's formula (see Theorem VII.13, and the comment in page 264 of \cite{rs1}),  the spectral theorem (see Section five of Chapter six of \cite{kato} and Theorem VIII.6 in page 263 of \cite{rs1})  for any intervals
 $\Delta_j\in \sigma_{\rm ac}(\mathcal A),$ for $j=1,2,$  any $f_j\in   \hat{C}_{\alpha}(I_0, L^2_{\rm o}(S_{1})) $   with compact support in $I_0$,  and for any bounded Borel  function $\phi$
\beq\label{s.86b}
\left( \phi(\mathcal A) E_{\mathcal A}(\Delta_1) f_1,   E_{\mathcal A}(\Delta_2) f_2\right)_{\mathcal H}=
\left( \phi(\cdot) \mathcal F_\pm   E_{\mathcal A}(\Delta_1) f_1, \mathcal F_\pm E_{\mathcal A}(\Delta_2)f_2\right)_{\mathcal H_{\rm sp}}.
\ene
Moreover, since the ser of functions \eqref{s.75b} is dense in $\mathcal H_{\rm ac}(\mathcal A),$ we have
\beq\label{s.87b}
\left( \phi(A)  f_1,  f_2\right)_{\mathcal H}=
\left( \phi(\cdot) \mathcal F_\pm f_1, \mathcal F_\pm f_2\right)_{\mathcal H_{\rm sp}}, \qquad f_1, f_2 \in \mathcal H_{\rm ac}(\mathcal A).
\ene
Equation \eqref{s.81} in the case where $\phi$ is  bounded follows from \eqref{s.87b}. In the general case where $\phi $ is not bounded,  \eqref{s.81} follows from \eqref{s.81} in the bounded case, approximating $\phi$ by 
$ \phi_n$ where $ \phi_n(\lambda)= \phi(\lambda)$ if $|\phi(\lambda)| \leq n$ and $ \phi_n(\lambda)=0,$ if 
$|\phi(\lambda)|  >n,$ for $n=1,\dots.$
\end{proof}
\section{The wave operators}\label{wave}
In this section we prove the existence and the completeness of the wave operators. We recall that the wave operators are said to be complete if their range is $\mathcal H_{\rm ac} (\mathcal A).$ See Definition 1 in Section 3 of Chapter 2 of \cite{ya}. We also prove  Birman's invariance principle of the wave operators. We first prove that the weak Abelian wave operators exist and and are isometric with final subspace the absolutely continuous subspace of $\mathcal A.$ The existence and the completeness of the wave operators follows from this result. Along the way, we also prove the stationary formulae for the wave operators. 
We begin defining the weak Abelian wave operators   as in equation 11 in page 76 of \cite{ya}
\beq\label{wa.2}
\Omega_\pm:= {\rm w-} \lim_{\varepsilon \downarrow 0} \int_0^\infty w_\varepsilon(t) P_{\rm ac}(\mathcal A)
e^{\pm it \mathcal A} e^{\mp it\mathcal A_0} dt,
\ene
if the limit exits, where we take $w_\varepsilon(t)= 2 \varepsilon e^{-2\varepsilon t},$     and  ${\rm w-} \lim_{\varepsilon \downarrow 0}$ means the weak limit as $\varepsilon \downarrow 0.$ This is equivalent to,
\beq\label{wa.2b}
\left(\Omega_\pm f, g\right)_{\mathcal H}:=  \lim_{\varepsilon \downarrow 0} \int_0^\infty w_\varepsilon(t) 
\left(e^{\pm it \mathcal A} e^{\mp it\mathcal A_0} f, g\right)_{\mathcal H}\, dt,
\ene
for $f\in \mathcal H,$ and $ g\in \mathcal H_{\rm ac}(\mathcal A).$

In the following theorem we prove that the weak Abelian wave operators exist, that they are given by the stationary formulae, and that they are isometric with final subspace $\mathcal H_{\rm ac}(A).$
  
\begin{theorem}\label{pwa.1}
The weak Abelian wave operators  \eqref{wa.2}  exist, and
\beq\label{wa.3a}
\Omega_\pm = \mathcal F_\pm^\ast \mathcal F.
\ene
Moreover the $\Omega_\pm$ are isometric with final subspace $\mathcal H_{\rm ac}(\mathcal A).$
\end{theorem}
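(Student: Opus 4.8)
The plan is to establish the stationary identity \eqref{wa.3a}; the existence of the weak Abelian wave operators comes out of the same argument, and the isometry together with the description of the final subspace are then purely algebraic consequences. Indeed, granting \eqref{wa.3a}, the fact that $\mathcal F$ is unitary from $\mathcal H$ onto $\mathcal H_{\rm sp}$ together with the relations \eqref{s.80} of Theorem~\ref{propgf} yields
\[
\Omega_\pm^\ast\Omega_\pm=\mathcal F^\ast\mathcal F_\pm\mathcal F_\pm^\ast\mathcal F=\mathcal F^\ast\mathcal F=\mathcal I,\qquad
\Omega_\pm\Omega_\pm^\ast=\mathcal F_\pm^\ast\mathcal F\mathcal F^\ast\mathcal F_\pm=\mathcal F_\pm^\ast\mathcal F_\pm=P_{\rm ac}(\mathcal A),
\]
so that $\Omega_\pm$ is isometric on $\mathcal H$ with range $\mathcal H_{\rm ac}(\mathcal A)$.

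To prove \eqref{wa.3a} I would argue by density, exploiting that the operator under the limit in \eqref{wa.2} is uniformly bounded: since each $e^{\pm it\mathcal A}e^{\mp it\mathcal A_0}$ is unitary and $P_{\rm ac}(\mathcal A)$ is a projection, $\bigl\|\int_0^\infty w_\varepsilon(t)P_{\rm ac}(\mathcal A)e^{\pm it\mathcal A}e^{\mp it\mathcal A_0}\,dt\bigr\|\le\int_0^\infty w_\varepsilon(t)\,dt=1$ for all $\varepsilon>0$. Hence it suffices to show that, for $f,g$ in the dense subspace $\hat{C}_{\alpha,{\rm b}}(I_0,L^2_{\rm o}(S_1))$ of $\mathcal H$ (with $\alpha$ as in Theorem~\ref{theo.s2b}), the bilinear form $\int_0^\infty w_\varepsilon(t)\bigl(P_{\rm ac}(\mathcal A)e^{\pm it\mathcal A}e^{\mp it\mathcal A_0}f,g\bigr)_{\mathcal H}\,dt$ converges as $\varepsilon\downarrow0$ to $(\mathcal F_\pm^\ast\mathcal F f,g)_{\mathcal H}=(\mathcal F f,\mathcal F_\pm g)_{\mathcal H_{\rm sp}}$; a standard density argument then gives the existence of the weak limit for all $f\in\mathcal H$, $g\in\mathcal H_{\rm ac}(\mathcal A)$ and the formula \eqref{wa.3a}. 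For $g\in\hat{C}_{\alpha,{\rm b}}(I_0,L^2_{\rm o}(S_1))$ the quantity $\mathcal F_\pm g=\mathcal F_\pm P_{\rm ac}(\mathcal A)g$ is, for a.e.\ $\mu\in\sigma_{\rm ac}(\mathcal A)\setminus\mathcal M$, given explicitly by \eqref{s.75c} applied to $g$ — take the $O_j$ to exhaust $\sigma_{\rm ac}(\mathcal A)\setminus\mathcal M$ and use \eqref{s.72} — so the target is an integral against an absolutely continuous measure.

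The core step is the evaluation of the $\varepsilon\downarrow0$ limit. Carrying out the $t$-integral against the spectral measures of $\mathcal A$ and $\mathcal A_0$ expresses it through the boundary values of $R_{\mathcal A}$ and the spectral derivative of $\mathcal A_0$ (this is the stationary representation of the weak Abelian wave operators; see Chapter~2 of \cite{ya} and \cite{kur}). Inserting the factorization $R_{\mathcal A}(z)=R_{\mathcal A_0}(z)\bigl(\mathcal I-\mathcal B R_{\mathcal A_0}(z)\bigr)^{-1}$ from \eqref{s.66}, passing to the spectral representation $\mathcal F=U\mathbf F$ of $\mathcal A_0$ via \eqref{sp.5}--\eqref{sp.8b}, and changing variables $E\leftrightarrow\beta$ as in \eqref{sp.1}--\eqref{s.43}, one is left with integrals of a Poisson-type kernel $\varepsilon/\pi\,\bigl((\beta-\mu)^2+\varepsilon^2\bigr)^{-1}$ against functions that are jointly Hölder continuous in $(\mu,\beta)$, thanks to Theorem~\ref{theo.s2b} and to the invertibility of $\mathcal I-(\mathcal B R_0)_\pm(\mu)$ on $\hat{C}_{\alpha,{\rm b}}(I_0,L^2_{\rm o}(S_1))$ for $\mu\notin\mathcal M$ established after \eqref{s.83b}. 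The limit $\varepsilon\downarrow0$ is then taken exactly as in the proof of Proposition~\ref{prop.stone}, by the Privalov/Plemelj boundary-value theorems (Theorem~5 and Remark~6 of Chapter~IV of \cite{kur}; Theorem~7 of Section~2 of Chapter~1 of \cite{ya}; Section~19 of Chapter~2 of \cite{mu}) combined with the uniform estimates of the type \eqref{s.48z}--\eqref{s48.exxz}; the outcome is precisely $(\mathcal F f,\mathcal F_\pm g)_{\mathcal H_{\rm sp}}$ by the definitions \eqref{sp.5} of $\mathcal F$ and \eqref{s.75c} of $\mathcal F_\pm$.

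The main obstacle is this last step: justifying simultaneously the interchange of $\lim_{\varepsilon\downarrow0}$ with the $\mu$-integration and the identification of the limit with the $\mathcal F_\pm$-expression. This is where the entire Hölder-space perturbation machinery of Section~\ref{specan} is indispensable, since $\bigl(\mathcal I-(\mathcal B R_0)_\pm(\mu)\bigr)^{-1}$ exists only off the closed null set $\mathcal M$ and its behaviour as $\mu$ traverses $\sigma_{\rm ess}(\mathcal A)$ must be controlled uniformly on compact subsets of $\sigma_{\rm ess}(\mathcal A)\setminus\mathcal M$. Once the convergence on the dense set is in hand, the remaining assertions of the theorem are immediate.
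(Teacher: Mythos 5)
Your proposal is correct and follows essentially the same route as the paper: reduce the isometry and range statements to the identity $\Omega_\pm=\mathcal F_\pm^\ast\mathcal F$ via \eqref{s.80}, prove that identity on a dense set using the uniform boundedness of the Abelian means, pass to the stationary (resolvent) representation of the weak Abelian wave operators, insert the factorization \eqref{s.66}, and evaluate the boundary values with the H\"older-space machinery of Section~\ref{specan} and the Privalov-type theorems. The one step you flag as the ``main obstacle'' is resolved in the paper exactly as you would need to: one tests against $E_{\mathcal A_0}(\tilde\Delta_j)f$ and $E_{\mathcal A}(\tilde\Delta_n)g$ with $\tilde\Delta_j,\tilde\Delta_n$ closed intervals at positive distance from $\mathcal M$ (using that $\mathcal M$ has measure zero so such vectors are dense in the relevant subspaces), and then shows that the $\beta$-integral over a small neighborhood $\mathcal M_1$ of $\mathcal M$ vanishes as $\varepsilon\downarrow 0$ by a Schwarz-inequality and dominated-convergence estimate, leaving only the region where $(\mathcal I-\mathcal B R_{\mathcal A_0}(\beta\pm i\varepsilon))^{-1}$ is uniformly controlled.
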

\begin{proof}  Note that as $\mathcal F$ is unitary and the $\mathcal F_\pm$ are partially isometric with final subspace $\mathcal H_{\rm ac}(\mathcal A)$ it is immediate from  \eqref{wa.3a} that  the $\Omega_\pm$ are isometric with final subspace $\mathcal H_{\rm ac}(\mathcal A).$ We proceed to prove   \eqref{wa.3a}.
Since $ e^{\pm it \mathcal A} e^{\mp it\mathcal A_0}$ is unitary it is enough to prove that the limits in \eqref{wa.2b} exist in  dense sets. Then, we can replace $g$ in \eqref{wa.2b} by $ P_{\rm ac}(\mathcal A)g, $ with $g \in \hat{C}_{\alpha, {\rm b}}(I_0, L^2_{\rm o}(S_{1})).$   Recall that  ${\hat{C}_{\alpha, {\rm b}}(I_0, L^2_{\rm o}(S_{1}))}$ was defined in \eqref{s.83b}.Furthermore, we can approximate  $ P_{\rm ac}(\mathcal A)g $ by
finite sums, $\sum_{n=1}^NE_{ \mathcal A}(\tilde{\Delta}_n) g, $ where $\tilde{\Delta}_n$  are  bounded  closed  intervals contained in the interior of $\Delta_n$ with  $\Delta_n$ as in \eqref{sp.9}, \eqref{sp.10} and moreover, with    
  $\tilde{\Delta}_n \cap \mathcal M=\emptyset$ and $ \tilde{\Delta}_j \cap \tilde{\Delta}_n= \emptyset,$ for
  $j\neq n.$
 In the  same way, we can replace $ f$ in \eqref{wa.2b} by $\sum_{n=1}^N E_{ \mathcal A_0}(\tilde{\Delta}_n) f, $ with 
 $ f \in \hat{C}_{\alpha, {\rm b}}(I_0, L^2_{\rm o}(S_{1})).$ 
Then, in order that the weak Abelian wave operators exists it is enough to prove that
\beq\label{wa.2c}
\left(\Omega_\pm E_{\mathcal A_0}(\tilde{\Delta}_j) f, E_{\mathcal A}(\tilde{\Delta}_n) g \right)_{\mathcal H}:=  \lim_{\varepsilon \downarrow 0} \int_0^\infty w_\varepsilon(t) 
\left(e^{\pm it \mathcal A} e^{\mp it\mathcal A_0}   E_{\mathcal A_0}(\tilde{\Delta}_j) f, E_{\mathcal A}(\tilde{\Delta}_n) g \right)_{\mathcal H}\, dt.
\ene
Further, as in the proof of Lemma 1 in page 92 of \cite{ya} we prove, taking the Fourier  transform that \eqref{wa.2c} is equivalent to
\beq\label{wa.3}\begin{array}{r}
\left( \Omega_\pm  E_{\mathcal A_0}(\tilde{\Delta}_j) f,  E_{\mathcal A}(
\tilde{\Delta}_n) g \right)_{\mathcal H}= \lim_{\varepsilon \downarrow  0}   \frac{\varepsilon}{\pi} \int_{\mathbb R}\,\left( R_{\mathcal A_0}(\beta\pm i\varepsilon) E_{\mathcal A_0}(\tilde{\Delta}_j) f,\right. \\ \left. R_{\mathcal A}(\beta\pm i\varepsilon)  E_{\mathcal A}(\tilde{\Delta}_n) g \right)_{\mathcal H}\, d\beta.
\end{array}
\ene
Then, it is enough to prove that \eqref{wa.3} holds and that,
\beq\label{wa.3b}
\left( \Omega_\pm  E_{\mathcal A_0}(\tilde{\Delta}_j) f,  E_{\mathcal A}(
\tilde{\Delta}_n) g \right)_{\mathcal H}= \left( \mathcal F  E_{\mathcal A_0}(\tilde{\Delta}_j) f, \mathcal F_\pm  E_{\mathcal A}(
\tilde{\Delta}_n) g\right)_{\mathcal H_{\rm sp}}.
\ene
 We proceed to prove \eqref{wa.3} and \eqref{wa.3b}. We first prove that when $j\neq n$ the limit in the right-hand side of \eqref{wa.3} is zero. Remark that as $\tilde{\Delta}_j \cap \tilde{\Delta}_n=\emptyset,$ also the right-hand side of \eqref{wa.3b} is zero. Take $L_0$ such that $\tilde{\Delta}_j\cup\tilde{\Delta}_n \subset (-L_0/2, L_0/2).$ Then, 
  \beq\label{wa.3bxx}\begin{array}{c}
 \frac{\varepsilon}{\pi} \int_{|\beta| \geq L  } \left| \left( R_{A_0}(\beta\pm i\varepsilon) E_{\mathcal A_0}(\tilde{\Delta}_j) f,\right. \right.  \left. \left. R_{\mathcal A}(\beta\pm i\varepsilon)  E_{\mathcal A}(\tilde{\Delta}_n) g \right)_{\mathcal H}\right |\, d\beta \leq \\ 
C \int_{|\beta| \geq L}  \frac{1}{|\beta|^2} d\beta \leq \frac{C}{L}, \qquad L \geq L_0.
\end{array}
\ene
 By the spectral theorem (see Section five of Chapter six of \cite{kato} and Theorem VIII.6 in page 263 of \cite{rs1})
$$
\|R_{\mathcal A}(\beta\pm i\varepsilon)  E_{\mathcal A}(\tilde{\Delta}_n) \| \leq C, \qquad \beta \in  \mathbb R \setminus \Delta_n.
$$
Then, by Schwarz's inequality and the spectral theorem,
\beq\label{wa.3ax}\begin{array}{c}
 \frac{\varepsilon}{\pi} \int_{[\mathbb R \setminus \Delta_n]\cap [-L,L]}  \left|\left( R_{A_0}(\beta\pm i\varepsilon) E_{\mathcal A_0}(\tilde{\Delta}_j) f,\right. \right.  \left. \left. R_{\mathcal A}(\beta\pm i\varepsilon)  E_{\mathcal A}(\tilde{\Delta}_n) g \right)_{\mathcal H}\right |\, d\beta \leq \\
C   \int_{[\mathbb R \setminus \Delta_n]\cap [-L,L]}  \left[ \int_{\tilde{\Delta}_j} \frac{\varepsilon^2}{(\beta-\lambda)^2+\varepsilon^2} \frac{d}{d \lambda}
(E_{\mathcal A_0}(\lambda)f,f)_{\mathcal H} \, d\lambda \right]^{1/2}\, d\beta . 
\end{array}
\ene
Hence, by Lebesgue dominated convergence theorem,
\beq\label{wa.3c}
\ds \lim_{\varepsilon \downarrow  0}   \frac{\varepsilon}{\pi} \int_{[\mathbb R \setminus \Delta_n]\cap [-L,L]} \left|\left( R_{A_0}(\beta\pm i\varepsilon) E_{\mathcal A_0}(\tilde{\Delta}_j) f,\right. \\ \left. R_{\mathcal A}(\beta\pm i\varepsilon)  E_{\mathcal A}(\tilde{\Delta}_n) g \right)_{\mathcal H}\right|\, d\beta=0.
\ene
By \eqref{wa.3bxx} and \eqref{wa.3c},
\beq\label{wa.3cc}
\ds \lim_{\varepsilon \downarrow  0}   \frac{\varepsilon}{\pi} \int_{\mathbb R \setminus \Delta_n} \left |\left( R_{A_0}(\beta\pm i\varepsilon) E_{\mathcal A_0}(\tilde{\Delta}_j) f,\right.\right. \\ \left. \left.R_{\mathcal A}(\beta\pm i\varepsilon)  E_{\mathcal A}(\tilde{\Delta}_n) g \right)_{\mathcal H}\right|\, d\beta=0.
\ene
We similarly prove,
\beq\label{wa.3ccee}\begin{array}{l}
\ds \lim_{\varepsilon \downarrow  0}   \frac{\varepsilon}{\pi} \int_{\mathbb R \setminus \Delta_j} \left |\left( R_{A_0}
(\beta\pm i\varepsilon) E_{\mathcal A_0}(\tilde{\Delta}_j) f,
  R_{\mathcal A}(\beta\pm i\varepsilon)  E_{\mathcal A}(\tilde{\Delta}_n) g \right)_{\mathcal H}\right |\, d\beta=0.
\end{array}
\ene
By \eqref{wa.3cc} and \eqref{wa.3ccee} the limit in the right-hand side  of \eqref{wa.3} is zero for $j \neq n.$
We consider now the case $j=n.$ 
Since $\mathcal M$ has measure zero,
\beq\label{wa.3cd}
   \frac{\varepsilon}{\pi} \int_{ \mathcal M} \left( R_{A_0}(\beta\pm i\varepsilon) E_{\mathcal A_0}(\tilde{\Delta}_j) f,\right. \\ \left. R_{\mathcal A}(\beta\pm i\varepsilon)  E_{\mathcal A}(\tilde{\Delta}_j) g \right)_{\mathcal H}\, d\beta=0.
\ene
Moreover, by  \eqref{sp.19} and \eqref{s.66}
\beq\label{wa.3d}\begin{array}{l}
\ds \frac{\varepsilon}{\pi} \int_{ \mathbb R \setminus \mathcal M}\,d\beta\,\left( R_{\mathcal A_0}(\beta\pm i\varepsilon) E_{\mathcal A_0}(\tilde{\Delta}_j) f,\right.  \left. R_{\mathcal A}(\beta\pm i\varepsilon)  E_{\mathcal A}(\tilde{\Delta}_j) g \right)_{\mathcal H}= \int_{  \mathbb R \setminus \mathcal M} \, d\beta \\[10pt]
 \int_{\tilde{\Delta}_j} d\gamma \,\frac{1}{\pi} \frac{\varepsilon} {(\beta-\gamma)^2+\varepsilon ^2}  (  \mathcal L_{j}(\gamma) \mathbf F f,   \mathcal L_{j}(\gamma) \mathbf F \left( I-\mathcal B R_{\mathcal A_0}(\beta\pm i\varepsilon) \right)^{-1}    E_{\mathcal A}(\tilde{\Delta}_j) g )_{\mathbb C_{m_j}}.         
 \end{array}
 \ene
Moreover, by  Theorem~\ref{theo.s2b}, \eqref{s.44}, \eqref{s.46}, \eqref{wa.3d}, item (a)  of Proposition~\ref{propder}, and Theorem 7 in Subsection 3 of Section 2 of Chapter 1 of \cite{ya} ( see also Theorem 5 and Remark 6 in  Chapter  IV of 
\cite{kur}, Theorem 6 in  Section 2 of Chapter 1 of \cite{ya}, and also Section 19 of Chapter 2 of \cite{mu})
\beq\label{wa.3e}\begin{array}{l}
\ds\lim_{\varepsilon \downarrow  0}   \frac{\varepsilon}{\pi} \int_{ \mathbb R \setminus \mathcal M}\,\left( R_{\mathcal  A_0}(\beta\pm i\varepsilon) E_{\mathcal A_0}(\tilde{\Delta}_j) f,\right.  \left. R_{\mathcal A}(\beta\pm i\varepsilon)  E_{\mathcal A}(\tilde{\Delta}_j) g \right)_{\mathcal H}\, d\beta=    \int_{  \tilde{\Delta}_j} \,d\beta \\
 \left(  \mathcal L_{j}(\beta) \mathbf F E_{\mathcal A_0}(\tilde{\Delta}_j) f , \right.  \left.\mathcal L_{j}(\beta) \mathbf F  \left( I-\mathcal (B R_{\mathcal A_0})(\beta\pm i 0) \right)^{-1}    E_{\mathcal A}(\tilde{\Delta}_j) g \right)_{\mathbb C_{m_j}}.        
 \end{array}
 \ene
 Further, by \eqref{sp.19}, \eqref{s.75c},  and \eqref{wa.3e}
 \beq\label{wa.3f}\begin{array}{l}
\ds \lim_{\varepsilon \downarrow  0}   \frac{\varepsilon}{\pi} \int_{\mathbb R \setminus \mathcal M }\,\left( R_{\mathcal A_0}( \beta\pm i\varepsilon) E_{\mathcal A_0}(\tilde{\Delta}_j) f,\right.  \left. R_{\mathcal A}(\beta\pm i\varepsilon)  E_{\mathcal A}(\tilde{\Delta}_j) g \right)_{\mathcal H}\, d\beta=\\
\left( \mathcal F  E_{\mathcal A_0}(\tilde{\Delta}_j)f , \mathcal F_\pm E_{\mathcal A}(\tilde{\Delta}_j  ) g\right)_{\mathcal H_{\rm sp}}.
\end{array}
\ene
By \eqref{wa.3cd}  and \eqref{wa.3f} equations \eqref{wa.3} and \eqref{wa.3b} hold. This completes the proof of the theorem
\end{proof}
In the following theorem we prove our results on the existence and the completeness of  the wave operators.
\begin{theorem}\label{waop}
The wave operators,
\beq\label{wa.6}
W_\pm= {\rm s-} \lim_{t\to \pm \infty} e^{it \mathcal A} e^{-it\mathcal A_0},
\ene
where $ {\rm s-} \lim_{t\to \pm \infty} $ means the strong limit as $t\to \pm \infty,$  exist and are complete. They are isometric from $\mathcal H$ onto $\mathcal H_{\rm ac}(\mathcal A).$ Moreover, the intertwining relations hold,
\beq\label{wa.7}
\phi(\mathcal A_{\rm ac}) W_\pm= W_\pm \phi(\mathcal A_0),
\ene
for every Borel function $\phi.$ Furthermore, the stationary formulae for the wave operators are valid,
\beq\label{wa.8}
W_\pm= \mathcal F^\ast_\pm \mathcal F.
\ene
\end{theorem}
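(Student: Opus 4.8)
The plan is to read off all four assertions from Theorem~\ref{pwa.1} and Theorem~\ref{propgf}, the only substantial step being the passage from the weak Abelian wave operators to the strong limits \eqref{wa.6}. First I would note that, by \eqref{2.58}, $\mathcal A_0$ is purely absolutely continuous, so $\mathcal H=\mathcal H_{\rm ac}(\mathcal A_0)$ and \eqref{wa.6} is to be understood on all of $\mathcal H$. By Theorem~\ref{pwa.1} the weak Abelian wave operators $\Omega_\pm$ exist, are isometric with final subspace $\mathcal H_{\rm ac}(\mathcal A)$, are given by the stationary formula \eqref{wa.3}, and satisfy $\Omega_\pm=\mathcal F_\pm^\ast\mathcal F$. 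Moreover the intertwining identity $e^{is\mathcal A}\Omega_\pm=\Omega_\pm e^{is\mathcal A_0}$, $s\in\mathbb R$, follows at once from $\Omega_\pm=\mathcal F_\pm^\ast\mathcal F$ together with the fact that $\mathcal F\mathcal A_0\mathcal F^\ast$ is multiplication by $\mu$ (see \eqref{sp.6}--\eqref{sp.8}) and the diagonalization $\mathcal A_{\rm ac}=\mathcal F_\pm^\ast\,\mu\,\mathcal F_\pm$ of \eqref{s.82b}.

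Next I would upgrade $\Omega_\pm$ to the strong limits \eqref{wa.6}. The Abelian means $\int_0^\infty w_\varepsilon(t)P_{\rm ac}(\mathcal A)e^{\pm it\mathcal A}e^{\mp it\mathcal A_0}f\,dt$ have norm at most $\|f\|$ and converge weakly to $\Omega_\pm f$ with $\|\Omega_\pm f\|=\|f\|$, hence converge in norm. The remaining Tauberian step — which does not follow from Abelian summability alone — I would carry out by the coincidence theorems of stationary scattering theory (\cite{ya}, Chapter~2), whose hypotheses are precisely the stationary representation \eqref{wa.3} of $\Omega_\pm$ together with the limiting absorption estimates for $G(z)$ up to $\Gamma$ obtained in Theorems~\ref{theo.s2} and \ref{theo.s2b}. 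This yields the existence of the strong limits \eqref{wa.6} and the identity
\[
W_\pm=\Omega_\pm=\mathcal F_\pm^\ast\mathcal F ,
\]
which is \eqref{wa.8}; and since the $W_\pm$ are isometric, $(\mathcal I-P_{\rm ac}(\mathcal A))e^{\pm it\mathcal A}e^{\mp it\mathcal A_0}f\to0$, so that the projection $P_{\rm ac}(\mathcal A)$ may be dropped from the definition.

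Granting $W_\pm=\mathcal F_\pm^\ast\mathcal F$, the rest is bookkeeping. Since $\mathcal F$ is unitary from $\mathcal H$ onto $\mathcal H_{\rm sp}$ (see \eqref{sp.5b}) and, by Theorem~\ref{propgf}, $\mathcal F_\pm^\ast$ is isometric from $\mathcal H_{\rm sp}$ onto $\mathcal H_{\rm ac}(\mathcal A)$ with $\mathcal F_\pm\mathcal F_\pm^\ast=\mathcal I$ and $\mathcal F_\pm^\ast\mathcal F_\pm=P_{\rm ac}(\mathcal A)$, the operators $W_\pm$ are isometric from $\mathcal H$ onto $\mathcal H_{\rm ac}(\mathcal A)$, i.e. they are complete. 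Finally, for any Borel function $\phi$ one has $\mathcal F\,\phi(\mathcal A_0)=\phi(\mu)\,\mathcal F$ (because $\mathcal H=\mathcal H_{\rm ac}(\mathcal A_0)$ and $\mathcal F$ diagonalizes $\mathcal A_0$) and $\phi(\mathcal A_{\rm ac})\,\mathcal F_\pm^\ast=\mathcal F_\pm^\ast\,\phi(\mu)$ by \eqref{s.81}, whence
\[
\phi(\mathcal A_{\rm ac})\,W_\pm=\phi(\mathcal A_{\rm ac})\,\mathcal F_\pm^\ast\mathcal F=\mathcal F_\pm^\ast\,\phi(\mu)\,\mathcal F=\mathcal F_\pm^\ast\mathcal F\,\phi(\mathcal A_0)=W_\pm\,\phi(\mathcal A_0),
\]
which is \eqref{wa.7}. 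I expect the main obstacle to be the Tauberian step in the second paragraph: Abelian summability by itself does not force strong convergence, and it is precisely here that the Hölder continuity of $G(z)$ up to $\Gamma$ from Section~\ref{specan} enters in an essential way; everything else is a straightforward consequence of Theorems~\ref{pwa.1} and \ref{propgf}.
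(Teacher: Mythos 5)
Your proposal is correct and follows essentially the same route as the paper: both deduce everything from Theorem~\ref{pwa.1} (existence, isometry onto $\mathcal H_{\rm ac}(\mathcal A)$, and the stationary formula for $\Omega_\pm$) and then invoke the abstract results of \cite{ya}, Chapter~2 (Corollary~2 on p.~74 and the remarks on p.~76) to upgrade the weak Abelian wave operators to the strong limits \eqref{wa.6} --- note only that the hypothesis actually driving that Tauberian step is the isometry of $\Omega_\pm$ onto $\mathcal H_{\rm ac}(\mathcal A)$, which you already have from Theorem~\ref{pwa.1}, rather than the limiting absorption estimates for $G(z)$ themselves. The one genuinely different (and perfectly valid) detail is your derivation of the intertwining relations \eqref{wa.7} directly from $W_\pm=\mathcal F_\pm^\ast\mathcal F$, \eqref{s.81}, and the diagonalization of $\mathcal A_0$, whereas the paper cites Theorem~4 on p.~69 of \cite{ya} for bounded $\phi$ and then passes to unbounded $\phi$ by a limiting argument.
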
 
\begin{proof}
By Theorem~\ref{pwa.1} the $\Omega_\pm$ are isometric from $\mathcal H$ onto  $\mathcal H_{\rm ac}(\mathcal A).$
Then, by Corollary 2 in page 74 and the remarks in page 76 of \cite{ya}, the wave operators $W_\pm $   exist,  are complete, and coincide with the $\Omega_\pm.$ They are isometric from $\mathcal H$ onto $\mathcal H_{\rm ac}(\mathcal A).$ Furthermore, by \eqref{wa.3a} the stationary formulae \eqref{wa.8} hold. Finally, by Theorem 4 in page 69 of \cite{ya} the intertwining relations \eqref{wa.7} are valid for any bounded Borel function $\phi.$
 In the general case where $\phi $ is not bounded,  \eqref{wa.7} follows from \eqref{wa.7} in the bounded case, approximating $\phi$ by 
$ \phi_n$ where $ \phi_n(\lambda)= \phi(\lambda)$ if $|\phi(\lambda)| \leq n$ and $ \phi_n(\lambda)=0,$ if 
$|\phi(\lambda)|  >n,$ for $n=1,\dots.$
\end{proof}

We proceed to prove Birman's invariance principle of the wave operators. For this purpose we make explicit the dependence of the wave operators on $\mathcal A$ and $\mathcal A_0,$   and denote by $W_\pm(\mathcal A, \mathcal A_0)$ the wave operator for the pair $\mathcal A, \mathcal A_0.$ By  \eqref{2.81c} either,
\beq\label{wa.9}
\sigma(\mathcal A_0)= \sigma_{\rm ess}(\mathcal A)=\sigma_{\rm ac}(\mathcal A)=\left[ (2\pi / T(E_{0}))^2, \infty\right),
\ene
or
\beq\label{wa.10}
\sigma(\mathcal A_0)= \sigma_{\rm ess}(\mathcal A)= \sigma_{\rm ac}(\mathcal A)= \cup_{l=1}^N  J_l \cup [\alpha_{N+1},\infty),
\ene
where $J_l= [\alpha_l, \gamma_l]$, for $l=1,\dots, N$  with   $\alpha_1:= (2\pi / T(E_{0}))^2 < \alpha_2, \dots < \alpha_N,$ and $  \gamma_1 < \gamma_2 < \dots <\gamma_N.$ Further,  $ \alpha_l < \gamma_l,$ for $l=1,\dots,N.$ 
Moreover, $J_l \cap J_k = \emptyset, $
for $l \neq k,$ with $l,k= 1,\dots,N.$ Furthermore, $\alpha_{N+1} > \gamma_N .$
The case  \eqref{wa.9} is valid if and only if  the {\it no gap condition}, $T(E_{\rm max}) > 2
T(E_{\rm min})$ holds. In the case \eqref{wa.9} we define,
\beq\label{wa.11}
\Lambda:= \left(\left (2\pi / T(E_{0})\right)^2, \infty\right),
\ene
and in the case \eqref{wa.10}  we designate,
\beq\label{wa.12}
\Lambda:= \cup_{N=1}^N (\alpha_l, \gamma_l) \cup (\alpha_{N+1}, \infty).
\ene 
We have that (see page 357 of \cite{kato}), 
$$
E_{\mathcal A}(\mathbb R \setminus \sigma(\mathcal A))= 0.
$$
Then, to define $\phi(\mathcal A)$ is is enough to defined $\phi$ on $\sigma(\mathcal A).$  Let $\phi$ be a real valued Borel function defined on $\sigma(\mathcal A)$   Then, with $\Lambda$ as in \eqref{wa.11} or \eqref{wa.12} $\sigma(\mathcal A)\setminus \Lambda $ is countable. It follows that, the Lebesgue measure of  $ \sigma(\mathcal A) \setminus \Lambda$ and of $ \phi( \sigma(\mathcal A) \setminus \Lambda)$ are zero. Let us represent $\Lambda$ in \eqref{wa.11} or in \eqref{wa.12} as 
\beq\label{wa.13}
\Lambda= \cup_{n=1}^Q \Lambda_n,
\ene
where the $\Lambda_n,$ for $n=1,\dots, Q$ are  nonintersecting intervals, and $Q$ is a positive integer, or $Q=\infty.$ Note that the $\Lambda_n$ do not need to coincide with the intervals in \eqref{wa.11} or in \eqref{wa.12}. Suppose that $\phi$ is absolutely continuous in each of the $\Lambda_n, n=1,\dots,$ and that $\phi'(\beta) >0,$ for almost every $\beta$ in $\Lambda_n.$
Then, by Lemma 1 in page 87 of \cite{ya}
\beq\label{wa.14}
\mathcal H_{\rm ac}(\phi(\mathcal A))= \mathcal H_{\rm ac}(\mathcal A),
\ene
and
\beq\label{wa.15}
\mathcal H_{\rm ac}(\phi(\mathcal A_0))= \mathcal H_{\rm ac}(\mathcal A_0)= \mathcal H.
\ene
  Note that in the notation of page 86 of \cite{ya} we are taking $M=\sigma(A).$
  
  Birman's invariance principle is the following theorem.  
  \begin{theorem} \label{bir}
  Suppose that $\phi$ is a real valued Borel function defined in $\sigma(\mathcal A),$ that is absolutely continuous on each of the $\Lambda_n$ in \eqref{wa.13} and that $\phi'(\beta) >0,$ for almost every $\beta \in \Lambda_n,$ for $n=1,\dots, Q.$ Then, the wave operators $W_\pm(\phi(\mathcal A), \phi(\mathcal A_0))$ exist and,
  \beq\label{wa.16}
  W_\pm(\phi(\mathcal A), \phi(\mathcal A_0))= W_\pm(\mathcal A, \mathcal A_0).
  \ene
  In particular, the wave operators $W_\pm(\phi(\mathcal A), \phi(\mathcal A_0))$ are complete. They are isometric with final subspace $\mathcal H_{\rm ac}(\phi(\mathcal A))= \mathcal H_{\rm ac}(\mathcal A).$
  \end{theorem}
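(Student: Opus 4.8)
The plan is to derive Birman's invariance principle directly from the abstract invariance principle of Yafaev, using the structural facts we have already established about $\mathcal{A}$ and $\mathcal{A}_0$. Recall from Theorem~\ref{waop} that the wave operators $W_\pm(\mathcal{A},\mathcal{A}_0)$ exist, are complete, and are given by the stationary formulae $W_\pm = \mathcal{F}_\pm^\ast \mathcal{F}$; recall also that $\mathcal{F}$ is unitary from $\mathcal{H}$ onto $\mathcal{H}_{\rm sp}$ and the $\mathcal{F}_\pm$ are partially isometric with initial subspace $\mathcal{H}_{\rm ac}(\mathcal{A})$. The key hypothesis to verify is that $\phi$ is piecewise absolutely continuous with positive derivative a.e. on the relevant spectral set; this has been arranged by the hypotheses on the $\Lambda_n$ together with the observation, already made in the text, that $\sigma(\mathcal{A})\setminus\Lambda$ is countable, hence of Lebesgue measure zero, and likewise $\phi(\sigma(\mathcal{A})\setminus\Lambda)$.

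First I would record that $\sigma_{\rm ac}(\mathcal{A}) = \sigma_{\rm ess}(\mathcal{A}) = \sigma(\mathcal{A}_0)$ by Theorem~\ref{theoproj}, so $\phi$ is defined (up to a null set) on the full absolutely continuous spectrum of both operators, and that $E_{\mathcal{A}}(\mathbb{R}\setminus\sigma(\mathcal{A})) = 0$ and $E_{\mathcal{A}_0}(\mathbb{R}\setminus\sigma(\mathcal{A}_0)) = 0$, so $\phi(\mathcal{A})$ and $\phi(\mathcal{A}_0)$ are well defined by the spectral theorem. Then I would invoke the abstract invariance principle --- in the form of Theorem~1 in Section~4 of Chapter~2 of \cite{ya} (or equivalently the cited Lemma~1 in page~87 of \cite{ya} together with the existence and completeness of $W_\pm(\mathcal{A},\mathcal{A}_0)$) --- whose hypotheses are precisely: (i) $W_\pm(\mathcal{A},\mathcal{A}_0)$ exist and are complete, which is Theorem~\ref{waop}; and (ii) $\phi$ is a real-valued Borel function, absolutely continuous on each $\Lambda_n$ with $\phi' > 0$ a.e. on each $\Lambda_n$, which is the hypothesis of the present theorem. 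The conclusion of the abstract principle gives both the existence of $W_\pm(\phi(\mathcal{A}),\phi(\mathcal{A}_0))$ and the identity \eqref{wa.16}.

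From \eqref{wa.16} together with Theorem~\ref{waop}, the completeness and the isometry statements follow: since $W_\pm(\phi(\mathcal{A}),\phi(\mathcal{A}_0)) = W_\pm(\mathcal{A},\mathcal{A}_0) = \mathcal{F}_\pm^\ast\mathcal{F}$, these operators are isometric on $\mathcal{H} = \mathcal{H}_{\rm ac}(\mathcal{A}_0) = \mathcal{H}_{\rm ac}(\phi(\mathcal{A}_0))$ (using \eqref{wa.15}) with range $\mathcal{H}_{\rm ac}(\mathcal{A}) = \mathcal{H}_{\rm ac}(\phi(\mathcal{A}))$ (using \eqref{wa.14}). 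I would close by remarking that the final subspace assertion is exactly the content of \eqref{wa.14}, which has already been recorded as a consequence of Lemma~1 in page~87 of \cite{ya}.

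The main obstacle, and the one place I would be careful, is checking that the hypotheses of the abstract invariance principle are met \emph{as stated in \cite{ya}} --- in particular that one is allowed to take the reference set $M = \sigma(\mathcal{A})$ rather than a single interval, and that the decomposition $\Lambda = \cup_{n=1}^Q \Lambda_n$ into open nonintersecting intervals on which $\phi$ is absolutely continuous and strictly increasing covers $\sigma(\mathcal{A})$ up to a null set. This is why the text emphasizes that $\sigma(\mathcal{A})\setminus\Lambda$ and $\phi(\sigma(\mathcal{A})\setminus\Lambda)$ are Lebesgue-null: the abstract theorem's conclusion about $\mathcal{H}_{\rm ac}$ is insensitive to modification of $\phi$ on null sets of the spectrum, so the piecewise hypothesis on $\Lambda$ suffices even though $\phi$ need not be well behaved on the countable remainder. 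Everything else is a direct citation; no new estimates are needed beyond those already proved in Sections~\ref{specan}--\ref{wave}.
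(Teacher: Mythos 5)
Your overall strategy (reduce everything to an abstract invariance principle in \cite{ya}) is in the right spirit, but there is a genuine gap at the central step: you treat the invariance principle as a theorem whose hypotheses are ``$W_\pm(\mathcal A,\mathcal A_0)$ exist and are complete, and $\phi$ is admissible,'' and whose conclusion is the existence of $W_\pm(\phi(\mathcal A),\phi(\mathcal A_0))$ together with \eqref{wa.16}. No theorem of that form is available; in particular your parenthetical claim that it follows ``equivalently'' from Lemma~1 in page 87 of \cite{ya} together with existence and completeness is wrong, since that lemma only identifies $\mathcal H_{\rm ac}(\phi(\mathcal A))$ with $\mathcal H_{\rm ac}(\mathcal A)$ and carries no information about wave operators. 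The invariance principle is not a formal consequence of existence and completeness of the ordinary (strong-limit) wave operators; the version actually available in \cite{ya} (Theorem 1 in page 110) is stated for the \emph{Abelian} wave operators $\Theta_\pm$ defined by \eqref{wa.17}, and your proposal contains no mechanism by which the existence of the strong limits defining $W_\pm(\phi(\mathcal A),\phi(\mathcal A_0))$ is established.

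The paper closes this gap with a three-step chain that your proposal is missing. First, since $W_\pm(\mathcal A,\mathcal A_0)$ exist (Theorem~\ref{waop}), the Abelian wave operators exist and $\Theta_\pm(\mathcal A,\mathcal A_0)=W_\pm(\mathcal A,\mathcal A_0)$ (remarks in page 76 of \cite{ya}). Second, the Abelian invariance principle gives existence of $\Theta_\pm(\phi(\mathcal A),\phi(\mathcal A_0))$ and the identity $\Theta_\pm(\phi(\mathcal A),\phi(\mathcal A_0))=\Theta_\pm(\mathcal A,\mathcal A_0)$. Third --- and this is the step you have no substitute for --- because $W_\pm(\mathcal A,\mathcal A_0)$ are isometric, so are $\Theta_\pm(\phi(\mathcal A),\phi(\mathcal A_0))$, and it is precisely this isometry that allows one to upgrade the Abelian limits to the strong limits, yielding the existence of $W_\pm(\phi(\mathcal A),\phi(\mathcal A_0))$ and hence \eqref{wa.16}. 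The remaining parts of your proposal --- the observations about $\sigma(\mathcal A)\setminus\Lambda$ and $\phi(\sigma(\mathcal A)\setminus\Lambda)$ being null, and the derivation of completeness, isometry and the final-subspace statement from \eqref{wa.16}, \eqref{wa.14} and \eqref{wa.15} --- are correct and agree with the paper.
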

  \begin{proof} The Abelian wave operators, $\Theta_\pm(\mathcal A, \mathcal A_0)$ are defined by the following limit in page 76 of \cite{ya},
  \beq\label{wa.17}
  \lim_{\varepsilon \downarrow 0} \int_0^\infty\varepsilon e^{-\varepsilon t} \| e^{i\pm t\mathcal A}  e^{\mp it \mathcal A_0 t} f- \Theta_\pm(\mathcal A,\mathcal A_0) f\|^2_{\mathcal H} dt, \qquad f \in \mathcal H.
  \ene
   By the remarks in page 76 of \cite{ya}, as by Theorem~\ref{waop} the wave operators $W_\pm(\mathcal A, \mathcal A_0)$ exist, the Abelian wave operators exist and,
  \beq\label{wa.18}
  \Theta_\pm(\mathcal A,\mathcal A_0)= W_\pm(\mathcal A, \mathcal A_0).
  \ene
  Then, by Theorem 1 in page 110 of \cite{ya} the Abelian wave operators $\Theta_\pm(\phi(\mathcal A),\phi(\mathcal A_0))$ exist and, furthermore,
  \beq\label{wa.19}
  \Theta_\pm(\phi(\mathcal A),\phi(\mathcal A_0))=   \Theta_\pm(\mathcal A,\mathcal A_0)= W_\pm(\mathcal A, \mathcal A_0).
  \ene
  Further,  by Theorem~\ref{waop} the wave operators $W_\pm(\mathcal A, \mathcal A_0)$ are isometric. Then, it follows from \eqref{wa.19} that the wave operators $  \Theta_\pm(\phi(\mathcal A),\phi(\mathcal A_0))$ are isometric. Hence,  by the remarks in page 76 of \cite{ya} the wave operators  $W_\pm(\phi(\mathcal A), \phi(\mathcal A_0))$ exist and
  \beq\label{wa.20}
   W_\pm(\phi(\mathcal A), \phi(\mathcal A_0))= \Theta_\pm(\phi(\mathcal A),\phi(\mathcal A_0)).
  \ene
  Equation \eqref{wa.16} follows from \eqref{wa.19} and \eqref{wa.20}. Finally, by \eqref{wa.16} the wave operators $  W_\pm(\phi(\mathcal A), \phi(\mathcal A_0))$ are isometric and complete with final subspace  $\mathcal H_{\rm ac}(\phi(\mathcal A))=\mathcal H_{\rm ac}(\mathcal A)$ since this is true for the wave operators $W_\pm(\mathcal A,\mathcal A_0).$
  \end{proof}
The scattering operator is defined as follows,
\beq\label{wa.21}
S:= W_+^\ast W_-.
\ene 
 By Theorem~\ref{waop} $S$  is unitary on $\mathcal H.$
 \section{Landau damping and large time dynamics}\label{ladab}
 In this section we obtain our results in Landau damping and in the long time asymptotic behaviour of the 
 phase-space density. We first prepare results that we need. We begin by considering the quadratic form associated to $\tilde{\mathcal A}.$  
Recall that the quadratic from of $\tilde{\mathcal  A}_0$ is given by \eqref{new.12}.
We define the quadratic form
 \beq\label{wa.23}
  \tilde{a}(f,g)= \tilde{a}_0(f,g)-\left( \tilde{\mathcal B} f, g\right)_{\tilde{\mathcal H}}, f,g\in D[\tilde{a}]= D[\tilde{a}_0]:= D[\tilde{\mathcal D}_o].
  \ene
   As $\tilde{\mathcal B}$ is selfadjoint and bounded, $\tilde{a}$ is  closed, symmetric, and bounded below. By the first representation theorem (see Theorem 2.1 in page  322 of \cite{kato})
  $\tilde{a}$ is the quadratic form of $\tilde{\mathcal A}.$ Further, by \eqref{2.81a} and item (c) of Theorem~\ref{eigen} $\tilde{\mathcal A} \geq \delta >0.$ Hence, also $ \tilde{a} \geq \delta  >0.$ By the second representation theorem (Theorem 2.23 in page 331 of \cite{kato} )
  \beq\label{wa.23b}
 D[\sqrt{\tilde{\mathcal A}}]= D[\tilde{a}]= D[\tilde{\mathcal D}_0],
 \ene
  and
  \beq\label{wa.24}
  \tilde{a}(f,g)= \left(  \sqrt{\tilde{A}} f, \sqrt{\tilde{A}}  g\right)_{\tilde{\mathcal H}}, \qquad f, g \in  D\left[\sqrt{\tilde{A}}\right]= D[\tilde{a}]= D[\tilde{\mathcal D}_{\rm o}].
  \ene 
   Recall that the gravitational potential $U\tilde{\mathcal D}_0 f$ induced by a phase-space density $f\in D[\tilde{\mathcal D}_0]$  is given by,
  \begin{eqnarray}\label{wa.25}
( U \tilde{\mathcal D}_0 f)(x)= 2\pi \int_{\mathbb R}|x-y| \rho(y)\, dy=  2\pi \int_{-R_0}^{R_0}|x-y| \rho(y)\, dy,\\
\rho (x)= \int (\tilde{\mathcal D}_0 f)(x,v)\, dv,\label{wa.25b}
\end{eqnarray}  
  Further, we used that  the support of $\rho$ is contained in $[-R_0,R_0].$   At the beginning of page 688 of \cite{hrs} it is proved,
 \beq\label{wa.26b}
 \int_{-R_0}^{R_0} \, \rho(x) \,dx=0.
 \ene
It follows from the second equality in \eqref{wa.25},  \eqref{wa.26b}, and using that $\rho$ is an even  function that the support of $U \tilde{\mathcal D}_0 f $ is contained
in $[-R_0,R_0].$
Furthermore, using \eqref{wa.26b} it is proved at the beginning of page 688 of \cite{hrs},
  \beq\label{wa.26}
\partial_x ( U \tilde{\mathcal D}_0 f)(x)=4 \pi \int_{-R_0}^x \rho (y) \,dy, 
  \ene
 and that  $\partial_x ( U \tilde{\mathcal D}_0 f)(x) =0,$ for  $x \in \mathbb R \setminus [-R_0,R_0].$

 Moreover,  it is proven in equation (A.4) of \cite{hrs} that 
  \beq\label{wa.27}
  \partial_x (U\tilde{\mathcal D}_{\rm 0} f)(x)=  4 \pi \int_{\mathbb R} v f(x,v) dv.
  \ene
We denote by $\mathcal K$ the following operator defined on $\tilde{\mathcal H},$
\beq\label{wa.28}
\mathcal K f:= \partial_x U\left( \tilde{\mathcal D}_0 \frac{1}{\sqrt{\tilde{\mathcal A}}}f\right), \qquad f \in \tilde{\mathcal H}.
\ene
In the following proposition we prove that $\mathcal K$ is a compact operator from  $\tilde{\mathcal H}$  into $L^2((-R_0,R_0)).$
 \begin{proposition}\label{comp}
 The operator $\mathcal K$ defined in \eqref{wa.28} is compact from  $\tilde{\mathcal H}$ into $L^2((-R_0,R_0)).$
 \end{proposition}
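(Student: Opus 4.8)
The plan is to factor $\mathcal K$ through a composition of bounded and compact pieces, exploiting that the map $f\mapsto \partial_x(U\tilde{\mathcal D}_0 f)$ only depends on $f$ through the one-dimensional moment $\rho(x)=\int_{\mathbb R}v f(x,v)\,dv$ and lands in functions supported on the fixed compact interval $[-R_0,R_0]$. First I would record, using \eqref{wa.27}, that for $h\in D[\tilde{\mathcal D}_0]$ the identity $\partial_x(U\tilde{\mathcal D}_0 h)(x)=4\pi\int_{\mathbb R}v\,h(x,v)\,dv$ holds, so that
\beq\label{propcomp.1}
\mathcal K f(x)=4\pi\int_{\mathbb R} v\left(\tfrac{1}{\sqrt{\tilde{\mathcal A}}}f\right)(x,v)\,dv,\qquad f\in\tilde{\mathcal H},\ x\in(-R_0,R_0),
\ene
which makes sense since by \eqref{wa.23b} the range of $1/\sqrt{\tilde{\mathcal A}}$ is $D[\tilde{\mathcal D}_0]$, and by \eqref{wa.26} and the remarks following it the left-hand side vanishes outside $[-R_0,R_0]$. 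The operator $1/\sqrt{\tilde{\mathcal A}}$ is bounded on $\tilde{\mathcal H}$ because by item (c) of Theorem~\ref{eigen} one has $\tilde{\mathcal A}\geq\delta>0$.

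Next I would identify the remaining map $g\mapsto 4\pi\int_{\mathbb R}v\,g(x,v)\,dv$, restricted to $g\in D[\tilde{\mathcal D}_0]$, with a multiple of the operator $\tilde{\mathcal B}$ already introduced in \eqref{2.21}, composed with projection onto a moment. Indeed $(\tilde{\mathcal B}g)(x,v)=4\pi v|\varphi'(E)|\int_{\mathbb R}v g(x,v)\,dv$, so $\int v g\,dv$ is, up to the weight $|\varphi'(E)|$ built into the norm of $\tilde{\mathcal H}$, exactly the kind of object controlled by $\tilde{\mathcal B}$; more directly, the claim is equivalent to showing that the linear functional $g\mapsto\int_{\mathbb R}v\,g(\cdot,v)\,dv$ maps $\tilde{\mathcal H}$ boundedly into $L^2((-R_0,R_0))$ and that, composed with $1/\sqrt{\tilde{\mathcal A}}$, it is compact. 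Boundedness follows from Cauchy--Schwarz in $v$ together with the finiteness of $\int_{\mathbb R} v^2|\varphi'(E)|\,dv$ for $x$ in the bounded set $\hat{\Omega}_0$ (the cutoff energy $E_0$ makes the $v$-integration range bounded and $|\varphi'(E)|$ integrable against $v^2$, as is used throughout \cite{hrs}). For compactness I would use the smoothing gained from $1/\sqrt{\tilde{\mathcal A}}$: since $\tilde{\mathcal A}=\tilde{\mathcal A}_0-\tilde{\mathcal B}$ with $\tilde{\mathcal B}$ bounded and $\tilde{\mathcal B}$ relatively compact with respect to $\tilde{\mathcal A}_0$ (Theorem 5.19 of \cite{hrs}), it suffices to prove compactness of the model operator $f\mapsto 4\pi\int v\,(1/\sqrt{\tilde{\mathcal A}_0}f)(x,v)\,dv$ and then perturb; and $1/\sqrt{\tilde{\mathcal A}_0}$ is, in the energy--angle variables of Subsection~2.5, a direct sum over $l$ of multiplications by $T(E)/(4\pi l)$, which provides decay in $l$, while the $x$-integral against $v$ against the bounded continuous kernels $\sin(4\pi l\theta(x,E))$ — combined with the continuity/Hölder bounds on $\theta(x,E)$ and $T(E)$ from Lemma~\ref{lemma2.2} and Proposition~\ref{per} — yields an Arzelà--Ascoli equicontinuity argument in $x\in[-R_0,R_0]$ exactly as in the proof of compactness of $G(z)$ in Theorem~\ref{theo.s1}.

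Concretely I would: (i) write $\mathcal K=\mathcal K_0+(\mathcal K-\mathcal K_0)$ with $\mathcal K_0 f=4\pi\int v\,(1/\sqrt{\tilde{\mathcal A}_0}f)\,dv$, (ii) show $\mathcal K-\mathcal K_0$ is compact because $1/\sqrt{\tilde{\mathcal A}}-1/\sqrt{\tilde{\mathcal A}_0}$ is compact on $\tilde{\mathcal H}$ (a consequence of the relative compactness of $\tilde{\mathcal B}$ and the resolvent/functional-calculus identity $\tilde{\mathcal A}^{-1/2}-\tilde{\mathcal A}_0^{-1/2}=\frac{1}{\pi}\int_0^\infty s^{-1/2}[(\tilde{\mathcal A}+s)^{-1}-(\tilde{\mathcal A}_0+s)^{-1}]\,ds$, each integrand being compact and the integral converging in operator norm), and (iii) prove $\mathcal K_0$ compact by truncating the sum over $l$ at level $N$, showing the tail $\to 0$ in operator norm using the $1/l$ decay from $\tilde{\mathcal A}_0^{-1/2}$ together with $\int v^2|\varphi'(E)|\,dv<\infty$, and showing each finite-$N$ piece is compact by Arzelà--Ascoli on $C([-R_0,R_0])\hookrightarrow L^2((-R_0,R_0))$, mimicking the equicontinuity estimates already carried out for $K_N(z)$ in Theorem~\ref{theo.s1}. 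The main obstacle I anticipate is step (iii): making the equicontinuity-in-$x$ estimate rigorous requires controlling $|\sin(4\pi l\theta(x_1,E))-\sin(4\pi l\theta(x_2,E))|$ uniformly after integration against the singular factor $1/\sqrt{2(E-U_0(x))}$ coming from $\int dv$, which near the turning points $x=x_\pm(E)$ degenerates; but this is precisely the type of estimate handled by \eqref{s.5fxx}--\eqref{s.8} and by the Hölder bound \eqref{s.38} on $a_l(x,E)=\sin(4\pi l\theta(x,E))$, so I would invoke those bounds rather than redo them, at the cost of a factor $l^\delta$ that is absorbed by choosing $\delta$ small against the $l^{-1}$ decay from $\tilde{\mathcal A}_0^{-1/2}$.
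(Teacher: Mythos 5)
Your proof is correct in substance, but it takes a genuinely different and considerably longer route than the paper's. The paper's argument is essentially two lines: since $D[\sqrt{\tilde{\mathcal A}}]=D[\tilde{\mathcal D}_{\rm o}]$ by \eqref{wa.23b}, the operator $\tilde{\mathcal D}_{\rm o}\tilde{\mathcal A}^{-1/2}$ is bounded from $\tilde{\mathcal H}$ into $\tilde{\mathcal H}_{\rm even}$, and the elliptic estimate (A.3) of \cite{hrs} bounds \emph{both} $\partial_x(Uh)$ and $\partial^2_x(Uh)=4\pi\rho$ in $L^2((-R_0,R_0))$ by $\|h\|_{\tilde{\mathcal H}_{\rm even}}$; hence $\mathcal K$ maps bounded sets of $\tilde{\mathcal H}$ into bounded sets of $H_1((-R_0,R_0))$ supported in $[-R_0,R_0]$, and Rellich--Kondrachov finishes. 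You never use the second $x$-derivative of the potential, which is where the paper gets compactness for free from the Poisson equation; instead you manufacture it from the spectral representation of $\tilde{\mathcal A}_0$ (the $1/l$ decay of $\tilde{\mathcal A}_0^{-1/2}$ plus Arzel\`a--Ascoli in $x$) and transfer it to $\tilde{\mathcal A}$ through the compactness of $\tilde{\mathcal A}^{-1/2}-\tilde{\mathcal A}_0^{-1/2}$, which your integral formula does deliver: the integrand equals $(\tilde{\mathcal A}+s)^{-1}\tilde{\mathcal B}(\tilde{\mathcal A}_0+s)^{-1}$, compact by Theorem 5.19 of \cite{hrs}, with norm $O(s^{-2})$ at infinity, so the $s^{-1/2}$-weighted integral converges in operator norm. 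The one place where you cannot literally invoke the bounds of Theorem~\ref{theo.s1} is the equicontinuity step: there the input lies in $C_{\alpha_1}(I_0,L^2_{\rm o}(S_{1}))$ and estimates such as \eqref{s.5f} control $\sup_\lambda |f_l(\lambda)|$ by the H\"older norm, whereas your input is only $f\in\tilde{\mathcal H}$, so each $f_l$ is merely square integrable and those sup bounds must be replaced by a Cauchy--Schwarz in $\lambda$ (which works, since $\int_{U_0(x_2)}^{E_0}(\lambda-U_0(x_2))^{-\delta}\,d\lambda<\infty$ for $\delta<1$); this is a routine but genuine modification rather than a citation. In short, your route buys independence from the estimate (A.3) of \cite{hrs} at the cost of redoing the Arzel\`a--Ascoli machinery with weaker input; both arguments are valid.
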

 \begin{proof}
 Let $\{f_n\}$ be a bounded sequence in $\tilde{\mathcal H}.$ Since $D[\tilde{\mathcal A}^{1/2}]= D[\tilde{\mathcal D}_0],$ the operator $\tilde{\mathcal D}_0  \tilde{\mathcal A}^{-1/2}$ is bounded. Then, for some constant $C,$
 \beq\label{wa.28b}
 \|\tilde{\mathcal D}_0  \tilde{\mathcal A}^{-1/2} f_n\|_{\tilde{\mathcal H}_{\rm even}} \leq C, \qquad n=1,\dots.
 \ene
 Recall that for $n=1,\dots$ the support of $\partial_x U(\tilde{\mathcal D}_0 \tilde{\mathcal A}^{-1/2} f_n)$ is contained in $[-R_0, R_0].$  Moreover, by equation (A.3) of \cite{hrs} and \eqref{wa.28b}
 \beq\label{wa.31}
 \begin{array}{l}
\left \|\partial_x (U\tilde{\mathcal D}_0   \tilde{\mathcal A}^{-1/2} f_n) \right\|_{L^2((-R_0,R_0))}+ \left\|\partial^2_{x}  (U\tilde{\mathcal D}_0   \tilde{\mathcal A}^{-1/2} f_n)\right \|_{L^2((-R_0,R_0))}\leq \\
 C\left\| \tilde{\mathcal D}_0   \tilde{\mathcal A}^{-1/2} f_n\right\|_{\tilde{\mathcal H}_{\rm even}} \leq C. 
\end{array}
 \ene
Then, by \eqref{wa.31}
  $\{\partial_x U(\mathcal D    \tilde{\mathcal A}^{-1/2} f_n)\}$ is a bounded sequence in the Sobolev space $H_1((-R_0,R_0)).$ Hence, by the   Rellich-Kondrachov local compactness theorem (see Theorem 6.2 in page 144 of \cite{adams}) there is a subsequence of   $\{\partial_x U(\mathcal D    \tilde{\mathcal A}^{-1/2} f_n)\}$ that is convergent in $L^2((-R_0, R_0)).$ This proves that $\mathcal K$ is compact.
  \end{proof}

  Observe that if $f \in \mathcal H_{\rm ac}(\tilde{\mathcal A}),$ as is well known, $e^{-it \tilde{\mathcal A}} f$ tends weakly to zero as $ t \to \pm \infty.$ This is immediate since by the spectral theorem (see Section five of Chapter six of \cite{kato} and Theorem VIII.6 in page 263 of \cite{rs1})

 $$
 \left( e^{-i t \tilde{\mathcal A}} f,g\right)_{\tilde{\mathcal H}}= \int_{\sigma_{\rm ac}(\tilde{\mathcal A)}}\, e^{-i t \lambda}
\frac{d}{d\lambda}\left(E_{\tilde{\mathcal A}})(\lambda)f, g\right)_{\tilde{\mathcal H}} d\lambda, \qquad g \in 
\tilde{\mathcal H}.
$$
Since $\frac{d}{d\lambda}\left(E_{\mathcal A}(\lambda)h, q\right)_{\mathcal H}$ is integrable, it follows from the Riemann- Lebesgue lemma that
\beq\label{wa.33}
\lim_{t \to \pm \infty}  \left( e^{-i t \tilde{\mathcal A}} f,g\right)_{\tilde{\mathcal H}}=0. \qquad g \in \tilde{\mathcal H}.
\ene
Now we state our results in the gravitational Landau damping.
\begin{theorem}\label{damping}
Let $\delta f$ be a solution to the linearized gravitational Vlasov-Poisson system  \eqref{2.5}-\eqref{2.7}. Let us define $\delta f_\pm$ as in \eqref{2.9}, \eqref{2.10}, \eqref{2.21b}, and \eqref{2.21c}. Assume that  $\delta f_+(t,x,v) \in D[\tilde{\mathcal D}_{\rm o}^\dagger],$ and that  $\delta f_+(0,x,v)=0.$  Further, suppose that  $\delta f_-(t,x,v)  \in D[\tilde{\mathcal A}] \cap \tilde{\mathcal H}_{\rm ac}(\tilde{\mathcal A}).$ Let $(U\delta f)(t,x)= (U\delta f_+)(t,x)$ 
be the gravitational potential induced by $\delta f.$ Then, the gravitational force $({\mathbf F}\delta f)(t,x): =- \partial_x(U \delta f)(t,x)= -\partial_x( U\delta f_+)(t,x)$
 satisfies.
\begin{enumerate}
\item[\rm (a)]
\beq\label{wa.34}
 \lim_{t \to \pm \infty} \|(\mathbf F\delta f)(t,  \cdot )\|_{\ds L^2((-R_0,R_0))}=0,
\ene

\item[\rm (b)]
\beq\label{wa.36}
 \lim_{t\to \pm \infty} \|\partial_t (\mathbf F\delta f)(t,\cdot)\|_{\ds L^2((-R_0, R_0))}=0.
 \ene

 \end{enumerate}
\end{theorem}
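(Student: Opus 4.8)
The plan is to reduce everything to the weak convergence to zero of $e^{-it\tilde{\mathcal A}}$ on the absolutely continuous subspace, combined with the compactness of the operator $\mathcal K$ from Proposition~\ref{comp}. First I would express the gravitational force in terms of the Antonov dynamics. Since $\delta f_+(0)=0$ and $\delta f_-$ solves the Antonov wave equation \eqref{2.22}, the standard abstract-wave-equation formula gives $\delta f_-(t) = \cos(t\sqrt{\tilde{\mathcal A}})\,\delta f_-(0) + \frac{\sin(t\sqrt{\tilde{\mathcal A}})}{\sqrt{\tilde{\mathcal A}}}\,\partial_t\delta f_-(0)$, and by \eqref{wa.27} the force is $(\mathbf F\delta f)(t,x) = -\partial_x(U\tilde{\mathcal D}_0\,h)(t,x)$ where $h$ is related to $\delta f_-(t)$ via one integration in $t$ (using \eqref{2.11}: $\partial_t\delta f_+ = -\tilde{\mathcal D}\delta f_-$, so $\delta f_+(t) = -\tilde{\mathcal D}_0\int_0^t \delta f_-(s)\,ds$, hence $U\delta f_+(t) = U(\tilde{\mathcal D}_0 H(t))$ with $H(t) = -\int_0^t\delta f_-(s)\,ds$). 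Carrying out the time integral on the spectral side, $H(t) = -\int_0^t\delta f_-(s)\,ds$ expands into terms like $\frac{\sin(t\sqrt{\tilde{\mathcal A}})}{\sqrt{\tilde{\mathcal A}}}$ and $\frac{1-\cos(t\sqrt{\tilde{\mathcal A}})}{\tilde{\mathcal A}}$ applied to the initial data; the key point is that $H(t) \in D[\tilde{\mathcal D}_0]$ and is, up to a $t$-independent correction, of the form $\tilde{\mathcal A}^{-1/2}(\text{oscillatory unitary}) \cdot(\text{fixed vector in }\tilde{\mathcal H}_{\rm ac})$.

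Then I would write $(\mathbf F\delta f)(t,\cdot) = -\partial_x U(\tilde{\mathcal D}_0 H(t)) = -\mathcal K\big(\sqrt{\tilde{\mathcal A}}\,H(t)\big)$, where $\sqrt{\tilde{\mathcal A}}\,H(t)$ is a bounded trajectory in $\tilde{\mathcal H}$ that, because the initial data lies in $\tilde{\mathcal H}_{\rm ac}(\tilde{\mathcal A})$ and $\tilde{\mathcal A}$ has a bounded inverse (so there is no zero-mode obstruction and the constant correction term $\tilde{\mathcal A}^{-1}$-type piece is also handled via functional calculus on $\tilde{\mathcal H}_{\rm ac}$), converges weakly to zero as $t\to\pm\infty$ by \eqref{wa.33} and the Riemann--Lebesgue argument spelled out just before the theorem. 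A compact operator maps weakly convergent sequences to norm-convergent ones, so $\mathcal K(\sqrt{\tilde{\mathcal A}}H(t)) \to 0$ strongly in $L^2((-R_0,R_0))$; this gives (a). For (b), I would differentiate: $\partial_t(\mathbf F\delta f)(t,\cdot) = -\partial_x U(\tilde{\mathcal D}_0\,\partial_t H(t)) = \partial_x U(\tilde{\mathcal D}_0\,\delta f_-(t)) = -\mathcal K(\sqrt{\tilde{\mathcal A}}\,\delta f_-(t))$, and again $\sqrt{\tilde{\mathcal A}}\,\delta f_-(t)$ is a bounded $\tilde{\mathcal H}_{\rm ac}$-trajectory (here one uses $\delta f_-(0)\in D[\tilde{\mathcal A}]$ and $\partial_t\delta f_-(0) = -\tilde{\mathcal D}_0\delta f_+(0) - \cdots$; with $\delta f_+(0)=0$ this reduces to the $\varphi'$-term, which lies in $D[\sqrt{\tilde{\mathcal A}}]$), hence weakly null, hence its image under the compact $\mathcal K$ is norm null.

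The main obstacle I anticipate is bookkeeping the regularity/domain conditions so that all the operators appearing — in particular $\sqrt{\tilde{\mathcal A}}\,H(t)$ and $\sqrt{\tilde{\mathcal A}}\,\delta f_-(t)$ — genuinely land in $\tilde{\mathcal H}$ with uniformly bounded norm in $t$, and that the formula $\partial_x U(\tilde{\mathcal D}_0 f) = \mathcal K(\sqrt{\tilde{\mathcal A}}f)$ is justified for $f$ only in $D[\tilde{\mathcal D}_0] = D[\sqrt{\tilde{\mathcal A}}]$ rather than in the operator domain $D[\tilde{\mathcal A}]$. This is where the hypotheses $\delta f_+(t)\in D[\tilde{\mathcal D}_0^\dagger]$, $\delta f_+(0)=0$, and $\delta f_-(t)\in D[\tilde{\mathcal A}]$ are used: the first ensures $\tilde{\mathcal D}_0$ can be applied and Remark~\ref{rem} / equation \eqref{wa.27} are valid, and $\delta f_-\in D[\tilde{\mathcal A}]$ gives enough smoothness for $\partial_t(\mathbf F\delta f)$ to make classical sense and for $\sqrt{\tilde{\mathcal A}}\delta f_-(t)$ to be a well-defined bounded curve. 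Everything else — the weak convergence, the compactness conversion — is then essentially the argument already displayed in \eqref{wa.33} together with Proposition~\ref{comp}.
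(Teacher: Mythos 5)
Your overall strategy --- writing the force as the compact operator $\mathcal K$ of Proposition~\ref{comp} applied to an oscillatory trajectory of the form $e^{\pm it\sqrt{\tilde{\mathcal A}}}(\text{fixed vector in } \tilde{\mathcal H}_{\rm ac}(\tilde{\mathcal A}))$, and then combining the weak convergence \eqref{wa.33} with the fact that compact operators turn weak convergence into norm convergence --- is exactly the paper's argument. But there is a genuine gap in your proof of part (a). You retain the general formula $\delta f_-(t)=\cos(t\sqrt{\tilde{\mathcal A}})\,\delta f_-(0)+\tilde{\mathcal A}^{-1/2}\sin(t\sqrt{\tilde{\mathcal A}})\,\partial_t\delta f_-(0)$, so that $H(t)=-\int_0^t\delta f_-(s)\,ds$ acquires the time-independent piece $-\tilde{\mathcal A}^{-1}\partial_t\delta f_-(0)$, and you assert that this constant piece is ``handled via functional calculus on $\tilde{\mathcal H}_{\rm ac}$.'' It is not: a fixed nonzero vector $c\in\tilde{\mathcal H}_{\rm ac}(\tilde{\mathcal A})$ does not converge weakly to zero as $t\to\pm\infty$, so $\mathcal K\bigl(\sqrt{\tilde{\mathcal A}}\,c\bigr)$ is a fixed, generically nonzero element of $L^2((-R_0,R_0))$, and the limit \eqref{wa.34} would simply be false if that piece survived.

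The statement is rescued only because $\partial_t\delta f_-(0)=0$, and this is the step you get wrong: you claim that with $\delta f_+(0)=0$ the initial velocity ``reduces to the $\varphi'$-term.'' In fact that term vanishes as well. By \eqref{2.13}--\eqref{2.14} the potential at $t=0$ is induced by $\rho(0,\cdot)=\int\delta f_+(0,\cdot,v)\,dv=0$, hence $\partial_x U(0,\cdot)=0$, and \eqref{2.12} then yields $\partial_t\delta f_-(0)=-\tilde{\mathcal D}\delta f_+(0)-\partial_x U(0,x)\,v\,|\varphi'(E)|=0$. This is precisely the one-line observation with which the paper opens its proof. With it, $\delta f_-(t)=\cos(t\sqrt{\tilde{\mathcal A}})f_0$, the constant piece in $H(t)$ disappears, $\sqrt{\tilde{\mathcal A}}\,H(t)=-\sin(t\sqrt{\tilde{\mathcal A}})f_0$ is weakly null, and the remainder of your argument --- including part (b), where the constant piece would in any case have been harmless because $\partial_t H(t)=-\delta f_-(t)$ --- goes through as written.
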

\begin{proof}
As $\delta f_-(t,x,v) \in D[\mathcal A]$ we have that   $\delta f_-(t,x,v)$ is a strong solution to \eqref{2.19}. Moreover, since  $\delta f_+(0)=0,$ it follows from \eqref{2.12} that $ \partial_t\delta f_-(0, x,v)=0.$ Hence, $ \delta f_-(t,x,v)$ is given by,

 \beq\label{wa.32}
 \delta f_-= \cos\left(\sqrt{\tilde{\mathcal A}}t\right)f_0,
 \ene
 where $ f_0(x,v):= \delta f_-(0,x,v).$ Further, by  \eqref{2.11} and \eqref{wa.32}
\beq\label{wa.32b}
\delta f_+(t)= \tilde{\mathcal D}_o\left( - \tilde{\mathcal A}^{-1/2} \sin\left(\sqrt{\tilde{\mathcal A}}\,t\right)f_0\right).
\ene

By \eqref{wa.28} and \eqref{wa.32b}
\beq\label{wa.32bc}
\mathbf F(t):= \mathcal K  \sin(\sqrt{\tilde{\mathcal A}} \,t)f_0= \mathcal K \frac{1}{2i} \left(e^{i\sqrt{\tilde{\mathcal A}}\,t}- e^{-i\sqrt{\tilde{\mathcal A}}\,t}\right) f_0,
\ene
As by \eqref{wa.33}   $e^{\pm i \sqrt{\tilde{A}}\, t}  f_0 $ tends weakly to zero as $t \to \pm \infty$  and 
$\mathcal K$ is compact  from $\tilde{\mathcal H}$ into  $L^2([-R_0,R_0]),$  it follows from \eqref{wa.32bc} that \eqref{wa.34} holds.
Let us now prove (b). By the first equality in  \eqref{wa.32bc},
\beq\label{wa.40}
\partial_t \mathbf F= \mathcal K \sqrt{\tilde{\mathcal A}}\cos(\sqrt{\mathcal A}\,t)f_0  .
\ene  
Further, 
 \beq\label{wa.40zzx}
\sqrt{\tilde{\mathcal A}} \cos(\sqrt{\mathcal A}\,t) f_0= \frac{1}{2}\ds  \left (e^{i\sqrt{\tilde{\mathcal A}}\,t}+ e^{-i\sqrt{\tilde{\mathcal A}}\,t}\right)  \sqrt{\tilde{\mathcal A}} f_0.
 \ene
 By \eqref{wa.33} and \eqref{wa.40zzx}, we have that  $  \cos(\sqrt{\mathcal A}\,t)  \sqrt{\tilde{\mathcal A}} f_0$ tends weakly to zero in $\tilde{\mathcal H}$ as $t\to \pm \infty.$ Then, as $\mathcal K$ is compact from $\tilde{\mathcal H}$ into  $L^2([-R_0,R_0]),$ equation \eqref{wa.36} follows from \eqref{wa.40}. 
\end{proof}
In the following corollary we prove that the  potential $U\delta f$ and its time derivative tend to zero as time tends to $\pm \infty.$ Recall that the support of $U\delta f$ is contained in $[-R_0, R_0].$
\begin{corollary}\label{decaypot}
Let $\delta f$ be a solution to the linearized gravitational Vlasov-Poisson system  \eqref{2.5}-\eqref{2.7} that satisfies the assumptions of Theorem~\ref{damping}. Further, let $(U\delta f)(t,x)= (U\delta f_+)(t,x) $ be the gravitational potential induced by $\delta f.$ Then, the following is true.
\begin{enumerate}
\item[\rm (a) ]
\beq\label{new.ccddee}
\left| (U\delta f)(t,x) \right| \leq \sqrt{2R_0}\, \|\mathbf F(\delta f)(t,\cdot)\|_{L^2((-R_0,R_0))}\to 0,\quad  t \to \pm \infty, x \in [-R_0, R_0].
\ene
\item[\rm (b) ]
\beq\label{new.ccddff}\begin{array}{c}
\left| \partial_t (U\delta f)(t,x) \right| \leq \sqrt{2R_0} \,\|\partial _t\mathbf F(\delta f)(t,\cdot)\|_{L^2((-R_0,R_0))}\to 0,  \\t \to \pm \infty, x \in [-R_0,R_0].
\end{array}
\ene
\end{enumerate} 
\end{corollary}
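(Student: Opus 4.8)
The plan is to derive both bounds from the explicit formula for $U\delta f$ in terms of the gravitational force $\mathbf F(\delta f) = -\partial_x(U\delta f)$, combined with the fact, recorded above, that the support of $U\delta f$ (equivalently $U\delta f_+$) is contained in $[-R_0,R_0]$. The key observation is \eqref{wa.26}, which says $\partial_x(U\tilde{\mathcal D}_0 f)(x) = 4\pi\int_{-R_0}^x\rho(y)\,dy$ and vanishes outside $[-R_0,R_0]$; applied along the solution this gives $\partial_x(U\delta f)(t,x) = 0$ for $x\notin[-R_0,R_0]$, and in particular $(U\delta f)(t,\pm R_0)$ is the common boundary value which, since $U\delta f$ has support in $[-R_0,R_0]$, must equal zero. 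Hence $(U\delta f)(t,-R_0) = 0$ and we can integrate the force from the left endpoint.

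\textbf{Proof of (a).} For $x\in[-R_0,R_0]$ write, using $(U\delta f)(t,-R_0)=0$,
\beq\label{pf.a1}
(U\delta f)(t,x) = (U\delta f)(t,x) - (U\delta f)(t,-R_0) = \int_{-R_0}^x \partial_y(U\delta f)(t,y)\,dy = -\int_{-R_0}^x (\mathbf F\delta f)(t,y)\,dy.
\ene
By the Cauchy–Schwarz inequality,
\beq\label{pf.a2}
\left| (U\delta f)(t,x) \right| \leq \left(\int_{-R_0}^{R_0} 1\,dy\right)^{1/2} \left(\int_{-R_0}^{R_0} |(\mathbf F\delta f)(t,y)|^2\,dy\right)^{1/2} = \sqrt{2R_0}\,\|\mathbf F(\delta f)(t,\cdot)\|_{L^2((-R_0,R_0))},
\ene
uniformly in $x\in[-R_0,R_0]$. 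Since Theorem~\ref{damping}(a) gives $\|\mathbf F(\delta f)(t,\cdot)\|_{L^2((-R_0,R_0))}\to 0$ as $t\to\pm\infty$, \eqref{new.ccddee} follows.

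\textbf{Proof of (b).} The same argument applies after differentiating in $t$. Since the representation \eqref{pf.a1} holds for every $t$ and the relevant quantities depend smoothly on $t$ (as $\delta f_-(t)\in D[\tilde{\mathcal A}]$, so $\mathbf F(t)$ is a $C^1$ function of $t$ with values in $L^2((-R_0,R_0))$ by the formulae \eqref{wa.32bc}, \eqref{wa.40} in the proof of Theorem~\ref{damping}, and $\partial_t(U\delta f)(t,-R_0)=0$ for the same reason as above), we may differentiate \eqref{pf.a1} under the integral sign to get $\partial_t(U\delta f)(t,x) = -\int_{-R_0}^x \partial_t(\mathbf F\delta f)(t,y)\,dy$, and then Cauchy–Schwarz gives
\beq\label{pf.b1}
\left| \partial_t(U\delta f)(t,x) \right| \leq \sqrt{2R_0}\,\|\partial_t\mathbf F(\delta f)(t,\cdot)\|_{L^2((-R_0,R_0))}
\ene
for $x\in[-R_0,R_0]$, which tends to zero as $t\to\pm\infty$ by Theorem~\ref{damping}(b). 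This proves \eqref{new.ccddff}.

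The only genuine point requiring care — the main obstacle, such as it is — is justifying that the boundary value $(U\delta f)(t,-R_0)$ (and its time derivative) vanishes, so that the fundamental theorem of calculus can be applied with a zero constant of integration; this is exactly where the support property of $U\delta f_+$ derived from \eqref{wa.26b} and \eqref{wa.26} is used. Everything else is an elementary Cauchy–Schwarz estimate feeding on Theorem~\ref{damping}.
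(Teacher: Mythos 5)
Your proof is correct and follows essentially the same route as the paper's: the paper likewise starts from $(U\delta f)(t,-R_0)=0$, writes $U\delta f(t,x)=-\int_{-R_0}^x\mathbf F(\delta f)(t,y)\,dy$, applies Schwarz's inequality, and invokes Theorem~\ref{damping}, differentiating in $t$ for part (b). Your additional care in justifying the vanishing boundary value via the support property of $U\delta f_+$ (from \eqref{wa.26b} and \eqref{wa.26}) is a welcome detail that the paper leaves implicit.
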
\begin{proof}
Since $ (U\delta f)(t, -R_0)=0,$
\beq\label{new.llmm}
U(\delta f)(t,x)= -\int_{-R_0}^x  \mathbf F(\delta f)(t,y)\, dy.
\ene
Then, \eqref{new.ccddee} follows from \eqref{wa.34} and Schwarz's inequality. Equation \eqref{new.ccddff} follows in the same way taking the time derivative of  \eqref{new.llmm} and using \eqref{wa.36}.
\end{proof}

Finally, let   $\delta f$ be a solution to the linearized gravitational Vlasov-Poisson system  \eqref{2.5}-\eqref{2.7}
as in Theorem~\ref{damping}. We define the wave operators in $\tilde{\mathcal H}$ as follows,
\beq\label{wa.41a}
\tilde{W}_\pm:= {\rm s-} \lim_{t\to \pm \infty} e^{it \tilde{\mathcal A}} e^{-it\tilde{\mathcal A}_0}=
\mathbf U^\dagger W_\pm \mathbf U.
\ene
Further, by \eqref{wa.16} and \eqref{wa.41a}
\beq\label{wa.41b}
  W_\pm(\phi(\tilde{\mathcal A}), \phi(\tilde{\mathcal A}_0))= W_\pm(\tilde{\mathcal A}, \tilde{\mathcal A}_0).
  \ene
The wave operators $\tilde{W}_\pm$  are complete since they are unitarily equivalent to the $W_\pm.$ Then,  for any
$f_0 \in \tilde{\mathcal H}_{\rm ac}(\tilde{\mathcal A})$ there are $g_{\pm,0} \in \tilde{\mathcal H}$ such that,   
\beq\label{wa.41} 
 g_{\pm,0}= \tilde{W}_\pm^\ast\, f_0.
 \ene
 We denote,
 \beq\label{wa.42}
 g_\pm(t):=\frac{1}{2}\left( e^{ - i t \sqrt{\tilde{A}_0}} g_{\pm, 0}+    e^{ it \sqrt{\tilde{A}_0}} g_{\mp,0}  \right).
 \ene
 Then, by  \eqref{wa.32} and  Birman's invariance principle \eqref{wa.41b}  with $\phi(\lambda):= \sqrt{\lambda},$
 \beq\label{wa.43}
\lim_{t \to \pm \infty}\| \delta f_-(t) - g_\pm(t)\|_{\tilde{\mathcal H}}=0.
\ene
That is to  say, for  $ t\to \pm \infty,$  the solution  $\delta f_-(t)$ is asymptotic to  the solutions $g_\pm(t)$ to the free Antonov wave equation,
$$
\partial^2_t g_\pm+\tilde{\mathcal A}_0 g_\pm=0.
$$ 
This implies that asymptotically  the odd part of the phase-space distribution function  follows the  orbits of the solutions to  Newton's equation for the gravitational potential $U_0,$ of the  steady state,  in the sense that $\delta f_-$ is  transported along these   orbits.

\section{Scattering for the Antonov wave equation}\label{scat}
In this section we consider the wave operators and the scattering operator for the Antonov wave equation \eqref{2.22}. We follows Section four of Chapter 3 of \cite{ya}. For related approaches see \cite{kato2} and  Section  10 in Chapter XI of\cite{rs3}. We introduce the new variables 
\beq\label{sc.1}
h_1(t):= \partial_t g, \qquad h_2(t)= (\sqrt{\tilde{\mathcal A}} g)(t).
\ene
Then \eqref{2.22} is equivalent to
\beq\label{sc.2}
i \partial h(t)= H h(t),
\ene 
where
\beq\label{sc.3}
h(t):= \begin{pmatrix} h_1(t)\\h_2(t)
\end{pmatrix}
\ene
and
\beq\label{sc4}
H:= \begin{bmatrix} 0 & -i \sqrt{\tilde{\mathcal  A}}\\   i \sqrt{\tilde{\mathcal  A}} &0\end{bmatrix}=\mathcal C
 \begin{bmatrix} \sqrt{\tilde{\mathcal A}} & 0\\  0& 
 -\sqrt{\tilde{\mathcal A}}\end{bmatrix} \mathcal C^\dagger,
 \ene
 with $\mathcal C$ the unitary matrix
 \beq\label{sc.5}
 \mathcal C:=\frac{1}{\sqrt{2}} \begin{bmatrix}  -i&i\\ 1&1\end{bmatrix}.
 \ene
 The operator $H$ is selfadjoint in the Hilbert space,
 \beq\label{sc.6}
 \mathbf H:= \tilde{\mathcal H}\oplus \tilde{\mathcal H},
 \ene
 with domain
 \beq\label{sc.7}
 D[H]:= D\left[\sqrt{\tilde{\mathcal A}}\right]\oplus  D\left[\sqrt{\tilde{\mathcal A}}\right].
 \ene
 The unperturbed Antonov wave equation is given by,
 \beq\label{sc.8}
 \partial_t^2 f+ \tilde{\mathcal A_0} f=0. 
 \ene
 We proceed as with \eqref{2.22}. We introduce the new variables 
\beq\label{sc.9}
h_1(t):= \partial_t f, \qquad h_2(t)= (\sqrt{\tilde{\mathcal A}_0} f)(t).
\ene
Then \eqref{sc.8} is equivalent to
\beq\label{sc.2a}
i \partial h(t)= H_0 h(t),
\ene 
where
\beq\label{sc.3b}
h(t):= \begin{pmatrix} h_1(t)\\h_2(t)
\end{pmatrix}
\ene
and
\beq\label{sc4b}
H_0:= \begin{bmatrix} 0 & -i \sqrt{\tilde{\mathcal  A}_0}\\   i \sqrt{\tilde{\mathcal  A}_0} &0\end{bmatrix}=\mathcal C
 \begin{bmatrix} \sqrt{\tilde{\mathcal A}_0} & 0\\  0& -\sqrt{\tilde{\mathcal  A}_0}\end{bmatrix} \mathcal C^\dagger,
 \ene
 with $\mathcal C$ the unitary matrix \eqref{sc.5}.
 The operator $H_0$ is selfadjoint in the Hilbert space \eqref{sc.6}, with domain
 \beq\label{sc.7b}
 D[H_0]:= D\left[\sqrt{\tilde{\mathcal A_0}}\right]\oplus  D\left[\sqrt{\tilde{\mathcal A}_0}\right].
 \ene
 Note that as $ \tilde{\mathcal H}_{\rm ac}(\sqrt{\tilde{\mathcal A}_0})=\tilde{\mathcal H}_{\rm ac}(\tilde{\mathcal A}_0)= \tilde{\mathcal H},$ we have that $\mathbf H_{\rm ac}( H_0)= \mathbf H.$
Then, by Theorems~\ref{waop}, and \ref{bir}, \eqref{wa.41a}, \eqref{wa.41b}, and Theorem  1 in page 109 of \cite{ya} the wave operators
\beq\label{sc.8b}
W_\pm(H, H_0):= {\rm s-}\lim_{t\to \pm \infty} e^{it H}\, e^{-it H_0}
 \ene
 exist, are isometric, and are complete, i.e.  they are onto $\mathbf  H_{\rm ac}( H).$ Further, by \eqref{sc4}
 \beq\label{sc.9b}
 \mathcal H_{\rm ac}(\mathbf H)=\left\{  h \in \mathbf H: h= \mathcal C g, \, {\rm where} \,g\in \tilde{\mathcal H}_{\rm ac}(\tilde{\mathcal A})\oplus  
 \tilde{\mathcal H}_{\rm ac}(\tilde{\mathcal A)}\right\}.
\ene
 Furthermore
 \beq\label{sc.10}
 W_\pm(\mathbf H, \mathbf H_0)= =\mathcal C
 \begin{bmatrix} W_\pm(\tilde{\mathcal A}, \tilde{\mathcal A}_0) & 0\\  0& W_\mp(\tilde{\mathcal A}, \tilde{\mathcal A}_0)\end{bmatrix} \mathcal C^\ast,
 \ene
and then
\beq\label{sc.11}
 W_\pm(\mathbf H, \mathbf H_0)= \frac{1}{2}
\begin{bmatrix} W_\pm(\tilde{\mathcal A}, \tilde{\mathcal A}_0)+W_\mp(\tilde{\mathcal A}, \tilde{\mathcal A}_0)  & i( W_\mp(\tilde{\mathcal A}, \tilde{\mathcal A}_0)- W_\pm(\tilde{\mathcal A}, \tilde{\mathcal A}_0))\\   i( W_\pm(\tilde{\mathcal A}, \tilde{\mathcal A}_0)- W_\mp(\tilde{\mathcal A}, \tilde{\mathcal A}_0))  & W_\pm(\tilde{\mathcal A}, \tilde{\mathcal A}_0)+W_\mp(\tilde{\mathcal A}, \tilde{\mathcal A}_0)\end{bmatrix}.
 \ene
Further, the scattering matrix,
\beq\label{sc.12}
\mathbf S (\mathbf H, \mathbf H_0):=  W_+(\mathbf H, \mathbf H_0)^\dagger  W_-(\mathbf H, \mathbf H_0), 
\ene
is unitary on $\mathbf H.$ 
The existence and the completeness of the wave operators \eqref{sc.10} implies that for every $ h \in \mathcal H_{\rm ac}(\mathbf H)$ the solution to the Antonov wave equation $e^{-it \mathbf H}h$ behaves as $t\to \pm \infty$
as a solution to the unperturbed Antonov wave equation,    $e^{-it \mathbf H_0} W_\pm(\mathbf H, \mathbf H_0)^\dagger h,$
\beq\label{sc.13}
\lim_{t\to \pm \infty}\| e^{-it \mathbf H}h - e^{-it \mathbf H_0} W_\pm(\mathbf H, \mathbf H_0)^\dagger h\|_{\mathbf H}=0.
\ene
\section{Conclusions}
\label{concl}

We studied the gravitational Vlasov-Poisson system linearized around steady states that are polytropes  and King steady states. These steady states are extensively used in astrophysics to study the dynamics of galaxies   and of
clusters of galaxies. Following a well established method  in astrophysics, we studied the linearized Vlasov-Poisson system, in an equivalent form,  by means of the Antonov wave equation. We obtained  comprehensive results in the spectral and scattering theory of the Antonov  operator. In particular, we identified the absolutely continuous spectrum of the Antonov operator and the part of the singular continuous spectrum that is embedded in the absolutely continuous spectrum. We constructed the generalized Fourier maps, we proved that the wave operators exist and are complete, and we obtained the stationary formulae for the wave operators.

With the help of these results we analyzed  the long time behaviour of the solutions to the linearized  gravitational Vlasov-Poisson system with initial values in the absolutely continuous subspace of the Antonov operator. We obtained a   precise description of the dynamics of  the stars in the galaxies, or of the galaxies in the clusters of galaxies, for large times.  We proved  that  the distribution functions  are asymptotic for large times  to the trajectories of the solutions to Newton's equation with the gravitational potential of the steady state, in the sense that they are transported along these trajectories.  Moreover,  we proved  that the  gravitational Landau damping holds. Namely, we   proved that the gravitational force and its time derivative, as well as the gravitational potential and its time derivative, tend to zero  for large times. Hence, the stars in the galaxies, or the galaxies in the clusters  of galaxies, behave  as matter that moves under the gravitational force of the steady state, up  to a small perturbation due to the selfconsistent gravitational potential that they induce. Further, this perturbation tends  to zero for large times.

\appendix

\section{Appendix}\label{apex}

\renewcommand{\theequation}{\thesection.\arabic{equation}}

\newtheorem{theorem2}{THEOREM}[section]
\renewcommand{\thetheorem}{\arabic{section}.\arabic{theorem}}

\newtheorem{prop2}[theorem2]{Proposition}
\newtheorem{lemma2}[theorem2]{LEMMA}

We begin the appendix  giving the  proof of a proposition about the spaces 
of H\"older continuous functions defined in a bounded open interval  and with values in a Hilbert space.  We include this proof to make the paper selfcontained. 

\begin{prop2}\label{propa.1}
Let $I$ be a bounded open  interval of real numbers, and $\mathcal G$ a separable Hilbert space. Then, for any $\alpha_1, \alpha_2,$ with $ 0< \alpha_1 < \alpha_2 \leq 1,$ we have,
\beq\label{a.2}
C_{\alpha_2}(I, \mathcal G) \subset \hat{C}_{\alpha_1}(I, \mathcal G),
\ene 
with the imbedding continuous.
\end{prop2}
\begin{proof}
By a change of coordinates $x \to ax+b$ with suitable $a, b, \in \mathbb R,$ we reduce the problem 
to the case  $I=(-\pi, \pi).$ As any function in $C_{\alpha_2}(I, \mathcal G)$ extends uniquely to a  function in $C_{\alpha_2}(\overline{I}, \mathcal G)$
we can assume that $f$ is H\"older continuous on $[-\pi, \pi].$ We decompose f as follows,
\beq\label{a.3}
f(x)= f_+(x)+ [f_-(x)-\frac{1}{\pi} f_-(\pi)x]+ \frac{1}{\pi}f_-(\pi)x,
\ene
where $f_\pm (x):= \frac{1}{2} (f(x)\pm f(-x)),$ are, respectively, the even and the odd parts of $f.$
As $\frac{1}{\pi}f_-(\pi)x$ belongs to $\hat{C}_{\alpha_1}(I,\mathcal G),$  $f_+(x),$ and $f_-(x)- \frac{1}{\pi}f_-(\pi)x$ take the 
same value at $\pm \pi$ the problem is reduced to the case of $f \in C_{\alpha_2}(I,\mathcal G)$ that satisfy 
$f(-\pi)= f(\pi).$ We can  use the (C,1) convergence of the Fourier series \cite{gr}. Remark that \cite{gr} considers real valued functions, but the same proof applies in the case of functions with values in a separable Hilbert space. We denote
\beq\label{a.4}
s_n(x)= \frac{a_0}{2}+\sum_{k=1}^n (a_k \cos kx+ b_k \sin kx) ,
\ene
where
\beq\label{a.5} 
a_k:= \frac{1}{\pi} \int_{-\pi}^\pi f(x) \cos kx dx,\qquad  k= 0, \dots,
\ene
and
\beq\label{a.6} 
b_k:= \frac{1}{\pi} \int_{-\pi}^\pi f(x) \sin kx dx,  \qquad k= 1, \dots.
\ene
Further, we designate
\beq\label{a.7}
\sigma_n(x)= \frac{1}{n} \sum_{l=0}^{n-1} s_l(x).
\ene
Then,  according to Section 12.2C of \cite{gr}
\beq\label{a.8}
\sigma_n(x)- f(x)= \frac{2}{\pi} \int_0^\pi \left[\frac{f(x+t)+f(x-t)}{2} -f(x)\right] K_n(t) dt,
\ene
where we have extended $f(x)$ periodically to $\mathbb R,$ and
\beq\label{a.9}
K_n(t):= \frac{(\sin n t/ 2)^2}{2n (\sin t/2)^2}.
\ene
Further,
\beq\label{a.10}
1= \frac{2}{\pi} \int_0^\pi K_n(t) dt.
\ene
Note that $\sigma_n \in C^\infty(I, \mathcal G).$ 
Further, by Theorem 12.3C of \cite{gr}
\beq\label{a.11}
\lim_{n\to \infty} \max_{x\in I} \| \sigma_n(x)-f(x)\|_{\mathcal G}=0.
\ene
Let us denote
\beq\label{a.12}
h_n(x):= \sigma_n(x)- f(x).
\ene
Then, by \eqref{a.8}
\beq\label{a.13}\begin{array}{c}
h_n(x_1)- h_n(x_2)= \ds \frac{2}{\pi} \int_0^\pi \ds \Big [ \frac{f(x_1+t)-f(x_2+t) +(f(x_1-t)-f(x_2-t))}{2} - \\
\ds  (f(x_1)-f(x_2)) \Big ] K_n(t) dt.
\end{array}
\ene
Hence, for any $\varepsilon >0,$ by \eqref{a.10} and as $f \in C_{\alpha_2}(I, \mathcal G),$
\beq\label{a.14}
\frac{1}{|x_1-x_2|^{\alpha_1} }\|h_n(x_1)- h_n(x_2)\|_{\mathcal G} \leq C |\varepsilon|^{\alpha_2-\alpha_1 }, \qquad |x_1-x_2|\leq \varepsilon,
\ene
where the constant $C$ in the right-hand side of \eqref{a.14} is independent of $\varepsilon.$
Moreover, for each fixed $\varepsilon >0,$ by  \eqref{a.11} and \eqref{a.12}, for each $\delta >0$ there is an $N$ such that,
\beq\label{a.15}
\frac{1}{|x_1-x_2|^{\alpha_1}} \|h_n(x_1)- h_n(x_2)\|_{\mathcal G} \leq \frac{1}{\varepsilon^{\alpha_1}} \delta, \qquad  |x_1-x_2|\geq \varepsilon,  n \geq N.
\ene
Given any $\rho>0$ we can take $\varepsilon$ so small that the right hand side of \eqref{a.14} is smaller than $\rho /2$ and then take $N$ so large  that the right hand side of \eqref{a.15} is smaller than ${\rho}/2.$ Then,
\beq\label{a.16}
 \frac{1}{|x_1-x_2|^{\alpha_1}} \|h_n(x_1)-h_n(x_2)\|_{\mathcal G}\leq \rho, \qquad n \geq N.
\ene 
By \eqref{a.11} and \eqref{a.16} $\sigma_n$ converges to $f$ in the norm of $C_{\alpha_1}(I,\mathcal G).$ This completes the proof of \eqref{a.2}. Moreover, it is immediate from the definition of $C^{\alpha}(I, \mathcal G)$
that $ C^{\alpha_2}(I, \mathcal G)\subset C^{\alpha_1}(I, \mathcal G)$ and,
$$
\|f\|_{C_{\alpha_1}(I, \mathcal G)} \leq C \|f\|_{C_{\alpha_2}(I, \mathcal G)}, \qquad   f\in C_{\alpha_2}(I, \mathcal G).
$$
Moreover, as $ \hat{C}_{\alpha_1}(I, \mathcal G) \subset  C_{\alpha_1}(I, \mathcal G)$ we get,
\beq\label{a.16b}
\|f\|_{\hat{C}_{\alpha_1}(I, \mathcal G)} \leq C \|f\|_{C_{\alpha_2}}(I, \mathcal G), \qquad   f\in C_{\alpha_2}(I, \mathcal G).
\ene
This proves that the imbedding of  $C_{\alpha_2}(I, \mathcal G)$  into  $\hat{C}_{\alpha_1}(I, \mathcal G)$ is continuous.
 \end{proof}
 
 In the following proposition we obtain properties of the period function and of the angle variable. Recall that the the angle $\theta(x,E)$ was defined in \eqref{2.36} for $ x_-(E)\leq x  \leq x_+(E),$ and  $E > E_{\rm min}.$  Moreover,  $\theta (x_-(E), E)=0,$ 
 and $ \theta(x_+(E),E) =1/2.$  To simplify the statement of the next proposition we agree to extend the definition of $ \theta(x,E)$ as follows: $\theta(x, E)=0,$ for $ x \leq x_-(E),$ and $ \theta (x,E)= 1/2,$ for $x\geq x_+(E).$
\begin{prop2}\label{propder}
We have that:

\begin{enumerate}
\item[{\rm (a)}]
There is a constant
$C$ such that.
\beq\label{2.42abco}
T'(E)\leq C \frac{1}{\sqrt{E-E_{\rm min}}}, \qquad  E\in ( E_{\rm min}, E_0].
\ene
\item[{\rm (b)}]
 The period functions  $T(E)$ has a continuous second   derivative for $ E \in (E_{\rm min}, E_0].$

\item[\rm (c)]  For every $\delta >0$ such that $E_{\rm min}+\delta < E_0$ there is a constant $C$ so  that
\beq\label{2.42abc}
\left | \theta(x, E_1)- \theta(x, E_2)\right | \leq C \sqrt{|E_1-E_2|}, \qquad  E_1, E_2 \in [E_{\rm min}+\delta, E_0].
\ene
\end{enumerate}
\end{prop2}
\begin{proof} 
  Recall  that $T(E)$ is continuously differentiable for $ E \in  ( E_{\rm min}, E_0].$ Further, in Theorem 2.1 of \cite{cw} it is proved
\beq\label{2.42abc1}
T'(E)= \frac{2}{E-E_{\rm min}} \int_0^{x_+(E)}  M(y)\frac{1}
 {\sqrt{2(E-U_0(y))}}  dy,
 \ene
 where,
 \beq\label{2.42abc2}
 M(x):= \frac{ U_0'(x)^2 -2 (U_0(x)-E_{\rm min}) U''_0(x)}{
  U_0'(x)^2 } .
  \ene
Moreover, by \eqref{1.11}, \eqref{1.12}, and \eqref{1.12b}
 \beq\label{mmnn}
 \rho_0(x):= \int f_0(x,v) dv= \ds 2 \int_0^{\sqrt{2(E_0-U_0(x))}} \varphi(E(v,x)) \,dv.
 \ene
 Then,
 \beq\label{mmnn.b}
 \rho_0'(x)=2 \int_0^{\sqrt{2(E_0-U_0(x))}} \varphi'(E)\, U'_0(x) \,dv.
 \ene
 By \eqref{mmnn.b}
 \beq\label{mmnn.cc}
 |U'''_0(x)|= 4\pi |\rho'_0(x)| \leq C |U'_0(x)|=O(|x|), \qquad x \to 0,
 \ene
 where we used   that   $U'_0(0)=0$ and Taylor expansion.
  We  recall that    $\partial^2_xU_0(0)=4\pi \rho_0(0) >0.$ Then, by \eqref{mmnn.cc} and Taylor expansion, 

\beq\begin{array}{c}\label{zzqq}
U_0(x)= U_0(0)+ \frac{1}{2} U''_0(0) x^2+ O(|x|^4), \qquad x \to 0, \\[3pt]
U'_0(x)= U''_0(0)x+O(|x|^3), \qquad x \to 0,\\
U''_0(x)= U''_0(0)+O(x^2), \qquad x \to 0,\\
U'''_0(x)=O(|x|), \qquad x \to 0.
\end{array}
\ene
Hence, by \eqref{zzqq} and as $E_{\rm min}:= U_0(0)$ we get,
 \beq\label{2.42abc4}
 \left(\frac{M(y)}{ U_0'(y)}\right)=O(|y|),  \partial_y \left(\frac{(M(y)}{ U_0'(y)}\right)= O(1), \qquad  y \to 0.
 \ene

Then, using \eqref{2.42abc1}, \eqref{2.42abc4}  and integrating by parts,
  \beq\label{2.42abc5}\begin{array}{l}
T'(E)= -\frac{2}{E-E_{\rm min}} \ds\int_0^{x_+E)}  \frac{M(y)}{U'_0(y)}  \partial_y\sqrt{2(E-U_0(y)} \, dy =\\[8pt]
  \ds\frac{2}{E-E_{\rm min}} \int_0^{x_+(E)} \left[\partial_y\left( \frac{M(y)}{U_0(y)'}\right)\right]  \sqrt{2(E-U_0(y))}\, dy.
 \end{array}
 \ene  
 Furthermore, we used that  $U_0(x_+(E))=E.$ By the second expression for $T'(E)$ in \eqref{2.42abc5}
 \beq\label{s.45aadd}
 \left| T'(E)  \right| \leq \frac{2^{3/2}}{\sqrt{E-E_{\rm min}}}  \int_0^{R_0} \left|\partial_y\left( \frac{M(y)}{U_0(y)'}\right)\right| \, dy.
 \ene
  Equation\eqref{2.42abco} follows from the   second equality in \eqref{2.42abc4} and \eqref{s.45aadd}. This proves (a). We now prove (b).   By  the second equality in \eqref{2.42abc5} we get,
  \beq\label{s.45}\begin{array}{l}
  T(E)''= 
  -\frac{2}{(E-E_{\rm min})^2} \ds\int_0^{x_+(E)} \left[\partial_y \left(\frac{M(y)}{U_0(y)'}\right)\right]  \sqrt{2(E-U_0(y))} dy+\\[9pt]
  \ds \frac{2}{(E-E_{\rm min})} \int_0^{x_+(E)} \left[\partial_y \left(\frac{M(y)}{U_0(y)'}\right)\right]  \frac{1}{\sqrt{2(E-U_0(y))}} dy.
 \end{array}
 \ene  
By \eqref{s.45} $ T(E)$ is  twice continuously differentiable for $ E > E_{\rm min}.$ This proves (b).

 Let us now prove (c). Let us take $ x_-(E) < x <x_+(E).$
To  compute $ \partial_{E} \theta(x,E)$ we proceed in a similar way as in the computation  of the derivative of the period $T(E)$ given in the proof of  Theorem 2.1 in \cite{cw}. Let us denote,
\beq\label{2.42f}
e:= E-E_{\rm min},  \quad G(x):= U_0(x)- E_{\rm min}, \quad \gamma(x,e):= 2(e-G(x)) = 2(E-U_0(x)),
\ene  
\beq\label{2.42g}
I(x,e):= \int_{x_-(e+E_{\rm min})}^x \, \sqrt{\gamma(y,e)}\, dy,
\ene
\beq\label{2.42h}
J(x,e):= \int_{x_-(e+E_{\rm min})}^x \left( \gamma(y,e) -2e \right ) \sqrt{\gamma(y,e)} \,dy.
\ene
 We have,
\beq\label{2.42i}
\partial_e I(x,e)=  \int_{x_-(e+E_{\rm min})}^x \,\frac{1} {\sqrt{\gamma(y,e)}}\, dy,
\ene
and
\beq\label{2.42j}
\partial_e J(x,e)= I(e,x)- 2e \partial_e I(x,e).
\ene
Note that, as $ U_0(x_-(e+E_{\rm min}))= U_0(x_-(E))= E,$ we have that $ \gamma(x_-(e+E_{\rm min},e)=0.$
Moreover  since, 
\beq\label{2.42k}
J(x,e)= \frac{2}{3} \int_{x_-(e+E_{\rm min})}^x \frac{\left( \gamma(y,e) -2e \right )}{(- U'_0(y))} \partial_y\gamma(y,e)^{3/2} dy
\ene
integrating by parts in y  we get,
\beq\label{2.42l}\begin{array}{c}
J(x,e)= 
\ds\frac{2}{3}  \frac{\left( \gamma(x,e) -2e \right )}{(- U'_0(x))} \gamma(x,e)^{3/2} -\\[12pt]
 \ds\frac{2}{3}\int_{x_-(e+E_{\rm min})}^x  \gamma(y,e)^{3/2}       \frac{( U_0'(y)^2 -(U_0(y)-E_{\rm min}) U''_0(y))}{
  U_0'(y)^2 }  dy.
  \end{array}
\ene
Derivating once \eqref{2.42j} with respect to $e$, and  \eqref{2.42l} twice with respect to $e$  we get,
\beq\label{2.42m}\begin{array}{c}
2 e \partial^2_e I(x,e)=\ds 2\frac{U_0(x)-E_{\rm min}}{(-U'_0(x))} \frac{1}{\sqrt{\gamma(x,e)}}+\\[12pt]
\ds \int_{x_-(e+E_{\rm min})}^x        \frac{( U_0'(y)^2 -2 (U_0(y)-E_{\rm min}) U''_0(y))}{
 \sqrt{\gamma(y,e)} U_0'(y)^2 }  dy.
 \end{array}
 \ene
Further, observe that, \eqref{2.36} and \eqref{2.42i}
\beq\label{2.42n}
\theta(x, E)= \frac{1}{T(E)} \partial_e I(x,E). 
\ene
Hence, by \eqref{2.42i}, \eqref{2.42m}, and \eqref{2.42n},
\beq\label{2.42o}
\partial_{E} \theta(x,E)= \left[-\frac{1}{T(E)^2} T'(E)  \int_{x_-(e+E_{\rm min})}^x \,\frac{1} {\sqrt{\gamma(y,e)}}
 dy+ a(x,E)\right],
 \ene
 where
 \beq\label{2.42p} \begin{array}{l} \ds
  a(x,E):= \frac{1}{ 2 T(E)  (E-E_{\rm min}) } \left( 2\frac{U_0(x)-E_{\rm min}}{(-U'_0(x))} \frac{1}{\sqrt{\gamma(x,e)}}+
  \right.\\ \left.\ds \int_{x_-(e+E_{\rm min})}^x        \frac{ M(y)}{
 \sqrt{\gamma(y,e)} }  dy\right). 
 \end{array}
  \ene
Remark that,
$$
 \lim_{x \to 0} 2\frac{U_0(x)-E_{\rm min}}{(-U'_0(x))}=0.
 $$
 Moreover (see \eqref{2.42abc4}), 
 $$
 M(y)= O(y^2), \qquad y \to 0.
 $$
 Then, it follows from \eqref{2.42o},
 \beq\label{2.42p1}
  \left| \partial_{E} \theta(x,E)\right| \leq C \frac{1}{\sqrt{|E- U_0(x)|}}, \qquad   E_{\rm min}+\delta \leq E \leq E_0,  x_-(E) < x <x_+(E).
\ene
Equation \eqref{2.42abc} follows integrating  \eqref{2.42p1}. This proves (c).
\end{proof}

In the following proposition we obtain properties of the functions  $x(\theta,E)$ and $v(\theta, E).$
\begin{prop2}\label{derx}
We recall that $ x(\theta, E),$ and  $v(\theta, E),$ are defined in \eqref{2.41}.
There is a constant $C$ such that.
\begin{enumerate}
\item[{\rm (a)}]
\beq\label{der.1ab}
\left |\partial_{E} x(\theta,E)\right |\leq C \frac{1}{\sqrt{E-E_{\rm min}}}, \qquad E \in (E_{\rm min}, E_0], \theta \in \mathbb S_1,
\ene
\item[{\rm (b)}]
\beq\label{der.1}
\left | x(\theta,E_1)- x(\theta,E_2)\right |\leq C \sqrt{|E_1-E_2|}, \qquad E_1, E_2 \in (E_{\rm min}, E_0], \theta \in \mathbb S_1,
\ene
\item[{\rm (c)}]
\beq\label{der.1aaa}
\left|\partial_{E} v(\theta,E)\right|\leq C \frac{1}{\sqrt{E-E_{\rm min}}}, \qquad E \in (E_{\rm min}, E_0], \theta \in \mathbb S_1.
\ene
\item[{\rm (d)}]\beq\label{der.1a}
\left|v(\theta,E_1)- v(\theta,E_2)\right|\leq C \sqrt{|E_1-E_2|}, \qquad E_1,E_2 \in (E_{\rm min}, E_0], \theta \in \mathbb S_1.
\ene
\end{enumerate}

\end{prop2} 
\begin{proof} A  result  similar to \eqref{der.1ab} and \eqref{der.1}  in the case of a spherically symmetric gravitational Vlasov--Poisson system with an external potential was proved in Lemma 3.5 of \cite{hrss}. The proof in our case is similar. We some details of the proof of \cite{hrss}  since we use the proof of \eqref{der.1ab} and \eqref{der.1} to prove \eqref{der.1aaa} and \eqref{der.1a}.
The solution to the characteristic equations \eqref{2.1} $(X(t, E), V(t,E)$  defined in \eqref{2.40b}, 
satisfies,
\beq\label{der.2}
X(t,E)= x_-(E)- \int_s^t ds \int_0^s U_0'( X(q,E)) \,dq.
\ene
Then,
\beq\label{der.3}
\partial_E X(t,E)=  x'_-(E)- \int_s^t ds \int_0^s U_0''(X(q,E))  \partial_E X(q,E) \, dq.
\ene
Let $(Q(t,E), P(t,E))$ be the solution to the system
\beq\label{der.4}
  P(t,E)=Q'(t,E), P'(t,E)= - U_0''(X(t,E))  Q(t,E)  ,\,\hbox{\rm with } \,Q(0,E)=1,  P(0,E)=0.
 \ene
  Then,
  \beq\label{der.5}
  Q(t,E)=1- \int_0^t ds  \int_0^q  U_0''(X(q,E))  Q(q,E)\, dq.
  \ene
By \eqref{der.5} and Gronwall's inequality,
\beq\label{der.7}
  \left| Q(t,E) \right| \leq C, \qquad (t,E)\in [0,T(E_0)]\times [E_{\rm min}, E_0].
   \ene
   Note that by \eqref{der.3} and \eqref{der.5} 
   \beq\label{der.8}
  \partial_E X(t,E)=Q(t,E)  x'_-(E) .
   \ene
Moreover, by \eqref{2.41}, \eqref{der.7}, and \eqref{der.8}
\beq\label{der.11}
\left|\partial_E x(\theta, E)\right|\leq   C ( \sqrt{E- E_{\rm min} }+  | x'_-(E)|).
\ene
By   \eqref{1.20} and \eqref{mmnn.cc}
%
\beq\label{der.13xx}
|x'_-(E)|=  \frac{1}{\sqrt{2 U''_0(0)}} \frac{1}{\sqrt{E-E_{\rm min}}} (1+O(|x_-(E)|^{1/2})), \qquad  x_-(E)\to 0.
 \ene
Finally, since  $ E \to E_{\rm min},$ as $x_-(E)\to 0,$ we have that \eqref{der.1ab} follows from  \eqref{der.11} and \eqref{der.13xx}. This proves (a). Item (b) follows from (a).
We now  prove (c). By \eqref{der.2}
\beq\label{der.13a}
V(t,E)= - \int_0^t U_0'( X(q,E)) \, dq.
\ene
Then,
\beq   \label{der.13aa}
\partial_E V(t,E)= -\int_0^t U_0''( X(q,E))  \partial_E X(q,E) \,  dq.
\ene
Since $X(t,E)$ is bounded, it follows from\eqref{der.7},  \eqref{der.8} and \eqref{der.13xx}, 
\beq\label{der.13b}
\left|\left(\partial_E V\right)( \theta T(E), E)\right|  \leq C  \frac{1}{\sqrt{E-E_{\rm min}}}, \qquad E \in (E_{\rm min}, E_0].
\ene 
Moreover, by \eqref{2.41}
\beq\label{der.13c}
\partial_E v (\theta, E)=  \theta T'(E) V' (\theta T(E), E)+ \partial_E V(\theta T(E), E).
\ene 
Further, as $V' (\theta T(E), E)$ is bounded for $(\theta, E) \in S_1 \times [E_{\rm min}, E_0]$ it follows from \eqref{2.42abco}, \eqref{der.13b}, and \eqref{der.13c} that \eqref{der.1aaa} holds. This completes the proof of (c).
Item (d) follows fom (c).
 \end{proof} 
 
 In the following proposition we obtain a further property of the angle variable.
\begin{prop2} \label{last}For every $\delta >0$ such that  $E_{\rm min}+ \delta < E_0$ and for every $ 0 < \alpha < 1/2$  there is a constant $C$ such that
\beq\label{der.14}
\left|\theta(x_1,E)- \theta(x_2,E)\right| \leq C  |x_1-x_2|^\alpha  ,  E \in[ E_{\rm min}+ \delta, E_0], \qquad x \in [x_-(E), x_+(E)].
\ene
\end{prop2}
\begin{proof}  Assume that $ x_1 < x_2.$ By \eqref{2.36} and H\"older's inequality,
\beq\label{der.15}
\begin{array}{c}
\left|\theta(x_1,E)- \theta(x_2,E)\right| \ds \leq  C   |x_1-x_2|^{\frac{1}{p}} 
\left[ \int_{x_1}^{x_2} \frac{1}{|E-U_0(y)|^{\frac{q}{2}} }   
 dy\right]^{\frac{1}{q}}, \\[5pt]
\ds  \frac{1}{p}+\frac{1}{q}=1,2 < p \leq \infty.
 \end{array}
\ene
Moreover, there is a $\varepsilon >0$ such that $ |U'_0(x_\pm)(E)| \geq \varepsilon,$ for $E \in  [E_{\rm min}+ \delta, E_0].$  Then, there is a constant $C$ such that,
\beq\label{der.17}
\left[ \int_{x_1}^{x_2} \frac{1}{|E-U_0(y)|^{\frac{q}{2}} } \right]^{\frac{1}{q}}   \leq \left[ \int_{x_-(E)}^{x_+(E)} \frac{1}{|E-U_0(y)|^{\frac{q}{2}} }    dy\right]^{\frac{1}{q}}  \leq C , \qquad   E \in[ E_{\rm min}+ \delta, E_0].
\ene
 Equation \eqref{der.14} follows from \eqref{der.15} and \eqref{der.17}.
\end{proof}

\end{document}